\definecolor{lightgray}{rgb}{0.95,0.95,0.95} %
\definecolor{paramcolor}{rgb}{0.2,0.4,0.8}    %
\definecolor{keywordcolor}{rgb}{0.8,0.2,0.2}  %
\definecolor{lbcolor}{rgb}{0.93,0.93,0.93}
\newtheorem{theorem}{Theorem}
\newtheorem{remark}{Remark}
\newtheorem{proposition}{Proposition}
\newtheorem{lemma}{Lemma}
\pgfplotsset{compat=1.17}
\begin{document}

\title{Scalable augmented Lagrangian preconditioners for fictitious domain problems}
\date{}

\author[1]{Michele Benzi}
\affil[1]{Scuola Normale Superiore, Piazza dei Cavalieri, 7, 56126 Pisa, Italy.

        {\small{michele.benzi@sns.it, federica.mugnaioni@sns.it}}}

\author[2]{Marco~Feder}
\affil[2]{Department of Mathematics, University of Pisa, Largo B. Pontecorvo, 5, Pisa, 56127, Italy.
        {\small{marco.feder@dm.unipi.it, luca.heltai@unipi.it}}}

\author[2]{Luca~Heltai}
\author[1]{Federica Mugnaioni$^{\star,}$}

\maketitle

\begin{center}
    \itshape \small Dedicated to the memory of Howard Elman
\end{center}

\bigskip

\begin{abstract}
    We present preconditioning techniques to solve linear systems of equations with a block two-by-two
    and three-by-three structure arising  from finite element discretizations of the fictitious domain method with Lagrange multipliers. In
    particular, we propose two augmented Lagrangian-based preconditioners to accelerate the convergence of iterative solvers for such classes of linear systems.
    We consider two relevant examples to illustrate the performance of these preconditioners when used in conjunction with flexible GMRES: the Poisson and the Stokes fictitious domain problems.  A spectral analysis is established for both exact and inexact versions of the preconditioners. We show the effectiveness of the proposed approach and the robustness of our
    preconditioning strategy through extensive numerical tests in both two and three dimensions.
    \newline
    \newline
    \newline
    \textbf{Keywords:} Preconditioning; Iterative solvers; Fictitious domain method; Non-matching meshes; Finite element method.
    \newline
    \newline
\end{abstract}

\section{Introduction}\label{sec:introduction}
\noindent Over the years, several numerical approaches have been developed to address problems involving overlapping or non-matching computational meshes. Among these, we highlight a small selection of well-established methods: the Immersed Boundary Method~\cite{Peskin_2002} (IBM), the Finite Cell Method~\cite{FCM} (FCM), the Cut Finite Element Method~\cite{CutFEM} (CutFEM), and the Fictitious Domain approach~\cite{Glowinski,BoffiGastaldiDLM,Hou_Wang_Layton_2012} (FD).

\noindent These methods differ in how they treat cells and degrees of freedom near the interface. CutFEM and FCM involve adjustments to the degrees of freedom and integral routines, since the degrees of freedom on cut cells are duplicated across the interface, and bulk integrals are recomputed over the polygonal or polytopal shapes resulting from the intersections of the elements with the cutting interface. For moving interface problems, such as in fluid-structure interactions, this may require recomputing the bulk contribution at each time step. In contrast, FD methods treat all cells in the same way without duplicating degrees of freedom, although this comes at the price of lower global order of convergence, which has been shown to be only a local effect near the interface~\cite{Heltai2019}. Furthermore, when using Lagrange multipliers in fictitious domain methods, coupling operators are assembled to encode interactions between basis functions defined on non-matching and overlapping meshes~\cite{boffi2023comparison}. %
 
\noindent Nevertheless, integration issues are limited to the coupling term only, making these methods attractive for moving interface problems by avoiding recomputation of bulk integrals. Recent developments
have shown that it is also possible to compute these integrals without explicitly dealing with mesh intersections, and this does not deteriorate the convergence properties
of the method~\cite{boffi2024quadratureerrorestimatesnonmatching}.  Moreover, it is robust with respect to small cuts~\cite{boffi2025stabilityconditioningfictitiousdomain}, while CutFEM and FCM may require additional stabilization in such cases. Therefore, when the coupling between the bulk and the interface problems can be modeled through a Lagrange multiplier, the
fictitious domain method offers a viable alternative which is generally simpler to implement, and may be computationally more efficient, especially in the presence of moving domains.
Several aspects of these methods have been addressed in the literature. For example, stabilization of the method when used in conjunction with CutFEM has been presented in~\cite{Burman2010FictitiousDF}, the impact of different types of coupling operators on the convergence of the resulting method has been analyzed in~\cite{BOFFI2022115650,boffi2024quadratureerrorestimatesnonmatching}, while possible strategies for the implementation of these operators in parallel computing environments, where the non-overlapping meshes are generally distributed with different partitioning, have been presented in~\cite{KrauseZulian,feder2024matrixfreeimplementationnonnestedmultigrid}.

The major drawback of using the Lagrange multiplier based Fictitious Domain method remains the computational demands of solving the resulting large-scale problems, both in terms of time and memory. In this work, we answer this challenge by proposing efficient preconditioning techniques that significantly reduce computational costs while maintaining robustness and scalability.

We focus on two model problems stemming from the application of the fictitious domain approach to the Poisson and Stokes problem. After discretization, the resulting linear systems exhibit the following two-by-two and three-by-three block structures:
\begin{equation}\label{eqn:intro}
  \begin{bmatrix}
    \mathsf{A} & \mathsf{C^T} \\
    \mathsf{C} & 0
  \end{bmatrix}
  \begin{bmatrix}
    \mathsf{u} \\
    \mathsf{\uplambda}
  \end{bmatrix}=
  \begin{bmatrix}
    \mathsf{f} \\
    \mathsf{g}
  \end{bmatrix} \qquad \text{and} \qquad
  \begin{bmatrix}
    \mathsf{A} & \mathsf{B^T} & \mathsf{C^T} \\
    \mathsf{B} & 0            & 0            \\
    \mathsf{C} & 0            & 0
  \end{bmatrix}
  \begin{bmatrix}
    \mathsf{ u} \\
    \mathsf{ p} \\
    \mathsf{ \lambda}
  \end{bmatrix}=
  \begin{bmatrix}
    \mathsf{ f} \\
    \mathsf{ 0} \\
    \mathsf{ g}
  \end{bmatrix},
\end{equation}
where $\mathsf{A} \in \mathbb{R}^{n \times n}$ is symmetric positive definite (SPD), $\mathsf{B} \in \mathbb{R}^{m \times n}$  and $\mathsf{C} \in \mathbb{R}^{l \times n}$. \\Two-by-two block linear systems of this type are an example of saddle point problems which arise in many areas of computational science and engineering. In~\cite{Benzi2005}, a wide variety of technical and scientific applications leading to these saddle point problems is reviewed, including mixed finite element methods and linear and nonlinear optimization. Similarly, three-by-three block linear systems of this form arise, for instance, in finite element modelling of potential fluid flow problems~\cite{potflow} or elasticity problems~\cite{boffi2013mixed}.

{\color{black}The large-scale saddle point problems in~\eqref{eqn:intro} make the use of direct solvers prohibitive, as they do not scale well with the size of the problem, in terms of both computational cost and memory usage. This is particularly evident when solving problems that result from discretizing partial differential equations in three-dimensional space.} Moreover, as the mesh size approaches zero, the condition number of the system matrices increases, leading to a deterioration in the convergence rate of iterative methods. Designing a suitable preconditioner is crucial to reduce or even eliminate this dependency on the mesh size. Developing robust solvers is also essential for efficiently handling computations with a large number of time steps and fine spatial discretizations within a reasonable timeframe.

Regarding the two-by-two block linear system of saddle point type in~\eqref{eqn:intro}, a large selection of solution methods is reviewed in~\cite{Benzi2005}, which also provides a detailed survey on preconditioners. In computational fluid dynamics, block diagonal and block triangular preconditioners are particularly popular~\cite{ElmanSilvesterWathen}. The effectiveness of such preconditioners depends on the availability of good approximations for both the (1,1)-block $\mathsf{A}$ and the Schur complement $\mathsf{S} = \mathsf{C A^{-1}C^T}$. However, finding a suitable approximation for the dense matrix $\mathsf{S }$ is especially challenging in the context of interface non-fitted meshes; {\color{black}difficulties arise} because the matrix $\mathsf{C}$ in~\eqref{eqn:intro} does not consist of integrals defined on the \emph{same} computational mesh, but rather on two arbitrarily overlapping grids. Preconditioning strategies for unfitted methods are an active area of research, and have been explored in~\cite{DEPRENTER2019604,cutFEM_prec,de2017condition}, to mention a few. However, to the best of our knowledge, for the fictitious domain with Lagrange multipliers method, it remains unclear how to construct a good approximation for the algebraic Schur complement and to which matrix it is spectrally equivalent. For this reason, in this paper, we propose an effective augmented Lagrangian-based preconditioner that eliminates the need for a good approximation to the dense Schur complement matrix. On the other hand, as shown in~\cite{trace1d2d,haznics}, in the framework of fitted (or matching) meshes, the algebraic Schur complement $\mathsf{S}$ is tightly related to
the continuous properties of the trace operator, requiring the computation of the action of fractional operators. A technique to overcome the computational burden
associated with this matter is presented in~\cite{li2024reducedkrylovbasismethods}.

Recently, several studies have focused on iterative methods and robust preconditioners for solving three-by-three block systems~\cite{ItermDoubleSP,DoubleSP,StokesDarcy,bakrani2023preconditioningtechniquesclassdouble}. In~\cite{ItermDoubleSP}, various block diagonal and block triangular preconditioners for Krylov subspace methods are described and analyzed for double saddle point systems with the same structure as the one in~\eqref{eqn:intro}. Several alternatives have been proposed in the literature, particularly for cases where the (3,3)-block is non-zero~\cite{nullspace, Benzi2019, Benzi2018, liqcri2022, alter}. In addition, preconditioners for Fictitious Domain with Distributed
Lagrange Multipliers (FD-DLM) formulations have been recently explored in~\cite{boffi2023parallel, BOFFI2024406,alshehri2025multigridpreconditioningfddlmmethod}, while Uzawa iterative methods were presented in~\cite{WangDLMFD_prec,Berrone2017AnOA}.
Most of the available preconditioners, such as those proposed in~\cite{ItermDoubleSP}, require finding an approximation for both Schur complements $\mathsf{S_B=BA^{-1}B^T}$ and $\mathsf{S_C=CA^{-1}C^T}$. We note that in our formulation the matrices $\mathsf{A}$ and $\mathsf{B}$ in~\eqref{eqn:intro} are the same as those in the problem without an immersed boundary. Therefore, existing literature on how to approximate $\mathsf{S_B}$ can be directly used (for instance, in the Stokes scenario, $\mathsf{A}$ and $\mathsf{B}$ correspond exactly to the discrete Laplacian and the discrete divergence of the classical Stokes problem). The main challenge is therefore related to finding approximations for $\mathsf{S_C}$. To address this issue, we propose an extension of the augmented Lagrangian (AL)-based preconditioner proposed for the two-by-two block linear system, as an effective preconditioner for the three-by-three block case.

In both cases, we follow the main ideas of augmented Lagrangian preconditioners in the context of stable finite element discretizations of
the Oseen and Navier-Stokes equations~\cite{ALprec,modALprec}, and derive an equivalent AL formulation for the problems at hand. The global linear system
of equations is solved using a {\color{black}(flexible)} Krylov subspace method. In this work, we use Flexible GMRES (FGMRES)~\cite{FGMRES} since the preconditioner is a variable one due to the way the (1,1)-block is {\color{black}approximately inverted}.
As previously indicated, to illustrate the performance of the proposed preconditioners in the context of fictitious domain methodologies, we consider two relevant test problems: the Poisson fictitious domain problem, which results in the two-by-two block linear system in~\eqref{eqn:intro}, and the Stokes fictitious domain problem, where the imposition of a constraint on the velocity field on the immersed boundary yields a \emph{double} saddle
point problem as the one in~\eqref{eqn:intro}. For the first model problem, the two-by-two block linear system is non-singular, whereas the system matrix in the second case may be singular (due to the rank deficiency of $\mathsf{B}$) if Dirichlet boundary conditions are applied on the whole boundary, leaving the scalar pressure field defined up to a constant.

In contrast to classical preconditioners for Stokes problems, where only the mass matrix on the pressure space is exploited while building the augmented block~\cite{ALprec}, also the mass matrix on the space of the Lagrange multiplier is needed when considering a fictitious domain formulation.
We provide a spectral analysis of the proposed AL preconditioner for the Stokes problem, assuming both exact and inexact solves (using similar arguments as in~\cite{bakrani2023preconditioningtechniquesclassdouble}). Analogous considerations hold for the Poisson case. Through extensive numerical experiments, uniform convergence with respect to the discretization parameters is observed. Moreover, having in mind three-dimensional scenarios where the computational complexity of sparse direct solvers for inverting individual blocks of the preconditioner soon becomes prohibitive, we have investigated the performance of our solvers when only inner iterative solvers are employed, confirming the good convergence properties. We provide a distributed memory
implementation of the proposed methodology using the \textsc{deal.II} finite element library~\cite{dealIIdesign,dealII95}.

The work is structured as follows. In Section~\ref{sec:poisson_problem}, we introduce the relevant notation and setup through a
Poisson problem with an internal boundary, and derive an augmented Lagrangian-based
preconditioner for the model problem. In Section~\ref{sec:stokes_problem}, we extend the same technique
to the Stokes system and perform a spectral analysis of the proposed preconditioner in Section~\ref{sec:spectral_analysis}. In Section~\ref{sec:inexact}, we discuss in detail the eigenvalue distribution of the preconditioned matrix when an inexact version of the proposed AL-based preconditioner is used, while
Section~\ref{sec:numerical_experiments} provides several numerical experiments to validate the robustness of our methodology across different scenarios.
Finally, Section~\ref{sec:conclusions} summarizes our conclusions and points to further research directions.

\section{Poisson fictitious domain problem and notation}\label{sec:poisson_problem}

Let $\omega$ be a closed and bounded domain of $\mathbb{R}^d$, $d=2,3$, with Lipschitz continuous boundary $\Gamma \coloneqq \partial \omega$, and $\Omega \subset \mathbb{R}^d$ a Lipschitz domain such that $\omega \Subset \Omega$. We consider the Poisson model problem
\begin{equation}
  \begin{cases}\label{eqn:model_problem}
    -\Delta u  = f \>\> \quad \text{in} \>\> \Omega \setminus \Gamma, \\
    \quad  \> \> \>u = g  \>\> \quad \text{on} \>\> \Gamma,           \\
    \quad  \> \> \>u = 0  \>\> \quad \text{on} \>\> \partial \Omega,
  \end{cases}
\end{equation}for given data $f \in L^2(\Omega)$ and $g \in H^{\frac{1}{2}}(\Gamma)$. Throughout this work we refer to $\Omega$ as the \textit{background} domain, while we refer to $\omega$ as the \textit{immersed} domain, and $\Gamma$ as the \emph{immersed boundary}. {\color{black}Hence, $\Gamma$ is a subset of $\Omega$ of codimension one (a surface in dimension three or a curve in dimension two)}. The rationale behind this setting is that it allows solving problems in a complex and possibly time-dependent domain $\omega$, by embedding it in a simpler background domain $\Omega$ -- typically a box -- and imposing some constraints on the immersed boundary $\Gamma$. For the sake of simplicity, we consider the case in which the immersed domain is \textit{entirely} contained in the background domain, but more general configurations may be considered. A prototypical configuration is shown in Figure~\ref{fig:domains}.

Given a domain $D\subset\mathbb{R}^d$ and a real number $s\ge 0$, we denote by $\|\cdot\|_{s,D}$ the standard Sobolev norm of $H^s(D)$.  In particular, $\|\cdot\|_{0,D}$ stands for the $L^2$-norm stemming from the standard $L^2$-inner product  $(\cdot, \cdot)_{D}$ on $D$. Finally, with $\langle \cdot, \cdot \rangle_{\Gamma}$ we denote the standard duality pairing between $H^{\frac12}(\Gamma)$ and its dual. At the discrete level, it can be evaluated using the scalar product in $L^2(\Gamma)$, provided that $\Lambda_h \subset L^2(\Gamma)$. As ambient spaces, we consider $$V(\Omega)\coloneqq H_0^1(\Omega) = \{v \in H^1(\Omega)\colon v_{|\partial \Omega} =0\},$$and $$\Lambda (\Gamma)\coloneqq H^{-\frac{1}{2}}(\Gamma),$$ which guarantee the well-posedness of the continuous problem thanks to the fulfillment of the \emph{ellipticity on the kernel} and the \textit{inf-sup} condition~\cite{Brezzi}. \\
Problem~\eqref{eqn:model_problem} can be written as a constrained minimization problem by introducing the Lagrangian $\mathcal{L}: V(\Omega) \times \Lambda (\Gamma)  \rightarrow \mathbb{R}$ defined as \begin{equation}
  \mathcal{L}(v,\mu)\coloneqq\frac{1}{2}(\nabla v,\nabla v)_{\Omega} - (f,v )_{\Omega} + \langle \mu, v-g \rangle_{\Gamma}.
\end{equation}

\noindent
Looking for stationary points of $\mathcal{L}$ gives the following saddle point problem of finding a pair $(u,\lambda) \in V(\Omega) \times \Lambda (\Gamma)$ such that
\begin{eqnarray}
  \label{eqn:LM1}
  (\nabla u, \nabla v)_{\Omega} + \langle\lambda, v\rangle_{\Gamma} &=& (f,v)_{\Omega} \qquad \forall v \in V(\Omega), \\
  \label{eqn:LM2}
  \langle  \mu, u\rangle_{\Gamma} &=& \langle \mu,g\rangle_{\Gamma} \qquad \forall \mu \in \Lambda (\Gamma),
\end{eqnarray}
which in operator form reads as
$$\begin{bmatrix}\label{eqn:operator_form}
    A & C^T \\
    C & 0
  \end{bmatrix}
  \begin{bmatrix}
    u \\
    \lambda
  \end{bmatrix}=
  \begin{bmatrix}
    F \\
    G
  \end{bmatrix},$$
where $$A \colon V(\Omega) \rightarrow V(\Omega)'  \quad \langle Au,v\rangle_{V(\Omega)',V(\Omega)} =  (\nabla u, \nabla v)_{\Omega},$$ $$C \colon V(\Omega) \rightarrow \Lambda(\Gamma)' \quad \langle C  v,\lambda \rangle_{\Lambda(\Gamma)',V(\Omega)} = \langle \lambda ,v \rangle_{\Gamma},$$ $$F \colon V(\Omega) \rightarrow \mathbb{R} \quad F(v)=(f,v)_{\Omega},$$ $$G \colon \Lambda(\Gamma) \rightarrow \mathbb{R} \quad G(\mu)=\langle \mu,g \rangle_{\Gamma}.$$ As is clear from the previous paragraph,
we use normal font to denote linear operators, e.g. $A$, while matrices and vectors are denoted by the sans serif font, e.g., $\mathsf{A}$ and $\mathsf{x}$, respectively. Moreover, we will use a calligraphic font to denote block matrices associated with the saddle point system, e.g., $\mathcal{A}$. We will
use the letter $\lambda$ to denote both the Lagrange multiplier and a generic eigenvalue of a matrix. The different meaning will be clear from the context and no confusion should arise.

\begin{figure}[ht]
  \centering
  \includegraphics{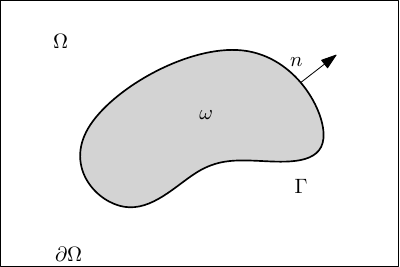}
  \caption{Model problem setting, with immersed domain $\omega$, immersed boundary $\Gamma$, and background domain $\Omega$.}
  \label{fig:domains}
\end{figure}

\subsection{Finite element discretization and mesh assumptions}
We discretize the computational meshes associated with the background domain $\Omega$ and with the immersed boundary $\Gamma$ in an \textit{unfitted} or \emph{non-matching} way, in the sense that the two computational grids are constructed independently, and no alignment conditions are asked. The computational meshes $\Omega_h$ and $\Gamma_h$ consist {\color{black}of a finite number $N_{\Omega}$ and $N_{\Gamma}$ of disjoint elements $T_i$ and $K_i$ such that $\{T_i\}_{1 \leq i \leq N_{\Omega}}$ and $\{K_i\}_{1 \leq i \leq N_{\Gamma}}$ form a partition of $\Omega$ and $\Gamma$, i.e. $\overline{\Omega} = \bigcup_{i=1}^{N_{\Omega}} T_i$, and $\overline{\Gamma} = \bigcup_{i=1}^{N_{\Gamma}} K_i$.} When $d=2$, $\Omega_h$ will be a quadrilateral mesh and $\Gamma_h$ a mesh composed by straight line segments. For $d=3$, $\Omega_h$ will be a hexahedral mesh and $\Gamma_h$ is a surface mesh whose elements are quadrilaterals embedded in the three-dimensional space. We denote by $h_\Omega$ and $h_\Gamma$ the mesh sizes of $\Omega_h$ and $\Gamma_h$, respectively. For simplicity, we
assume that the mesh sizes $h_\Omega$ and $h_\Gamma$ are small enough so that the geometrical error is negligible with respect to the discretization error. Furthermore, we assume $\Gamma_h$ to be a quasi-uniform discretization of $\Gamma$. {\color{black}In practice, we will consider meshes $\Gamma_h$ with a uniform mesh-size,  using background meshes $\Omega_h$ that may be locally refined in the vicinity of $\Gamma_h$ to capture the geometry of the immersed boundary.}

We consider the finite element discretization of $V(\Omega)$ based on linear standard Lagrange finite elements, namely
\begin{equation}\label{eqn:V_h}
  V_h \coloneqq \{v_h \in H_0^1(\Omega)\colon v_h{|_T} \in \mathcal{Q}_1(T), \forall T \in \Omega_h\},
\end{equation}where $\mathcal{Q}_1(T)$ denotes the space of polynomials of degree $1$ in each variable on the element $T$. For the Lagrange multiplier space $\Lambda (\Gamma)$ we set 
\begin{equation}
  \label{eqn:Lambda_h}\Lambda_h \coloneqq \{\mu_h \in L^2(\Gamma)\colon \mu_h{|_K} \in \mathcal{Q}_1(K), \forall K \in \Gamma_h\}.
\end{equation}
Given basis functions $\{ \varphi_i\}_{i=1}^n$ and $\{ \psi_\alpha\}_{\alpha=1}^l$ such that $V_h \coloneqq \text{span} \{ \varphi_i\}_{i=1}^n$ and $\Lambda_h \coloneqq \text{span} \{ \psi_\alpha\}_{\alpha=1}^l$, we have that the discrete version of~\eqref{eqn:LM1}, \eqref{eqn:LM2} can be written as the following algebraic problem with unknowns $\mathsf{u}$ and $\mathsf{\uplambda}$:

\begin{equation}
  \begin{bmatrix}\label{eqn:algebraic_form}
    \mathsf{A} & \mathsf{C^T} \\
    \mathsf{C} & 0
  \end{bmatrix}
  \begin{bmatrix}
    \mathsf{u} \\
    \mathsf{\uplambda}
  \end{bmatrix}=
  \begin{bmatrix}
    \mathsf{f} \\
    \mathsf{g}
  \end{bmatrix} \quad \text{or} \quad \mathcal{A} \mathsf{x} = \mathsf{{b}},
\end{equation}
where
\begin{eqnarray*}
  \mathsf{A}_{ij} &=& (\nabla \varphi_j, \nabla \varphi_i)_{\Omega}   \qquad i,j=1,\dots,n \\
  \mathsf{C}_{\alpha i} &=& \langle \varphi_i,\psi_\alpha \rangle_{\Gamma}  \qquad \quad \>\>\> i=1,\dots,n \quad \alpha = 1,\dots, l  \\
  \mathsf{f}_{i} &=& (f, \varphi_i)_{\Omega}   \qquad \qquad  \> i=1,\dots,n \\
  \mathsf{g}_{\alpha} &=& \langle  \psi_\alpha, g \rangle_{\Gamma} \qquad \quad \>\>\>\>\> \alpha = 1,\dots, l.
\end{eqnarray*}

Note that $\mathsf{A} \in \mathbb{R}^{n \times n}$ is symmetric positive definite and
$\mathsf{C} \in \mathbb{R}^{l \times n}$. Next, we recall the following theorem
that states the existence, uniqueness, and stability of the discrete solution together with
optimal error estimates. The interested reader is referred to~\cite[Sect. 5]{BoffiGastaldiDLM}, for
    the details. Notice that $\mathsf{C}$ must have full row rank in order to ensure the solvability of the algebraic problem~\eqref{eqn:algebraic_form}, which is guaranteed by the inf-sup condition~\eqref{inf-sup} below.

\begin{proposition}\label{prop:infsup}
  Let $V_h$ and $\Lambda_h$ be defined as above. If $h_{\Omega}/h_{\Gamma}$ is sufficiently
  small and the mesh $\Gamma_h$ is quasi-uniform, then there exists $\beta_2$ independent of the mesh sizes $h_\Omega$ and $h_\Gamma$ such
  that the following discrete inf-sup condition holds:
  \begin{equation}\label{inf-sup}
    \inf_{\mu_h \in \Lambda_h} \sup_{v_h \in V_h} \frac{\langle \mu_h, v_h \rangle_{\Gamma}}{\|\mu_h\|_{-\frac{1}{2},\Gamma} \> \|v_h\|_{1,\Omega}}\ge \beta_2.
  \end{equation}
\end{proposition}

\noindent Standard theory of saddle point problems~\cite{boffi2013mixed} yields the following a priori estimate.
\begin{theorem}\label{thm:err_estimate}
  Let $V_h$ and $\Lambda_h$ be defined as above. If $h_{\Omega}/h_{\Gamma}$ is sufficiently small and the mesh $\Gamma_h$ is
  quasi-uniform, then the unique solution $(u_h,\lambda_h)\in V_h\times \Lambda_h$ of the discretization of~\eqref{eqn:LM1},~\eqref{eqn:LM2},
  satisfies

  \begin{equation}
    \|u-u_h\|_{1,\Omega}+\|\lambda-\lambda_h\|_{-1/2,\Gamma}\le
    {\rm C} \inf_{\substack{v_h\in V_h\\ \mu_h\in \Lambda_h}}
    \left(\|u-v_h\|_{1,\Omega}+\|\lambda-\mu_h\|_{-1/2,\Gamma}\right),
    \label{eq:err_estimate}
  \end{equation}
  with ${\rm C}>0$ a constant independent of the mesh sizes $h_\Omega$ and $h_\Gamma$.
\end{theorem}

\begin{remark}  
  These results show that the accuracy of the FD method depends on the approximation properties of both the solution and the Lagrange multiplier spaces, provided the continuous and discrete inf-sup conditions hold. In particular, Theorem~\ref{thm:err_estimate} suggests that discretization spaces should be chosen so that their approximation properties are balanced both in terms of polynomial degrees, and in terms of the mesh ratio; otherwise, the global error is dominated by the least accurate variable, and the inf-sup condition may deteriorate. Indeed, the constant ${\rm C}$ in~\eqref{eq:err_estimate} is independent of the mesh sizes, but not of their ratio, as the discrete inf-sup condition of the coupling operator reflects this dependence. Hence, no advantage would be gained by choosing a very small or a very large ratio. The proposed preconditioner is designed to be effective under these conditions, ensuring robust convergence and a consistently low iteration count.
\end{remark}

\subsection{Augmented Lagrangian preconditioner}\label{sec:AL_preconditioner}

We present our augmented Lagrangian-based preconditioner for the model problem~\eqref{eqn:model_problem} discretized by
stable finite element pairs. Our derivation closely follows
early works on augmented Lagrangian preconditioning for the Oseen problem~\cite{ALprec,modALprec}.

\vspace{0.2cm}
The idea behind the classical AL approach is to replace the original system~\eqref{eqn:algebraic_form} with the equivalent formulation
\begin{equation}
  \begin{bmatrix}\label{eqn:matrix_aug_p}
    \mathsf{A+\gamma C^T W^{-1}C} & \mathsf{C^T} \\
    \mathsf{C}                    & 0
  \end{bmatrix}
  \begin{bmatrix}
    \mathsf{u} \\
    \mathsf{\uplambda}
  \end{bmatrix}=
  \begin{bmatrix}
    \mathsf{f +\gamma C^T W^{-1}g } \\
    \mathsf{g}
  \end{bmatrix} \quad \text{or} \quad \mathcal{A_\gamma} \mathsf{x} = \mathsf{\hat{b}},
\end{equation}
where $\mathsf{W}$ is a properly chosen SPD matrix and $\gamma$ is a positive real number. Having defined the augmented term as $\mathsf{{A}_{\gamma} \coloneqq A + \gamma C^T W^{-1}C}$, an ideal preconditioner for problem~\eqref{eqn:matrix_aug_p} is given by the block triangular matrix
\begin{equation}\label{eqn:preconditioner}
  \mathcal{P_{\gamma}} \coloneqq
  \begin{bmatrix}
    \mathsf{{{A}_{\gamma}}} & \mathsf{C^T}                \\
    0                       & -\frac{1}{\gamma}\mathsf{W}
  \end{bmatrix}.
\end{equation}
Since $\mathsf{A}$ is SPD and $\mathsf{C}$ has full row rank, the conditions of Lemma 4.1 in \cite{ALprec} are satisfied, thus $\gamma \mathsf{W^{-1}}$ provides a good approximation for the inverse of the Schur complement $\mathsf{S}_\gamma = \mathsf{C A_\gamma ^{-1}C^T }$ when $\gamma$ is large. However, as $\gamma$ increases, $\mathsf{A_\gamma}$ becomes increasingly ill conditioned, making it preferable to keep $\gamma$ at a moderate value. The choice of the user-defined parameter $\gamma$ is crucial for the performance of the preconditioner. Indeed, it should be selected based on a trade-off between the number of inner iterations required to approximate the action of $\mathsf{A_\gamma^{-1}}$ and the number of outer iterations needed to solve the entire system in~\eqref{eqn:matrix_aug_p}. Note that the potential benefit of $\mathcal{A}_\gamma$ being symmetric is lost when a nonsymmetric preconditioner, such as the one in~\eqref{eqn:preconditioner}, is used. However, if good approximations to $\mathsf{A_\gamma}$ and $\mathsf{S}_\gamma$ are available, using a method such as (F)GMRES with block triangular preconditioning will lead to rapid convergence,  making the overhead incurred from the use of a non-symmetric solver negligible \cite{Benzi2005}. In practice, the action of $\mathcal{P}_{\gamma}^{-1}$ is given by
$$\mathcal{P}_{\gamma}^{-1}
  =
  \begin{bmatrix}
    \mathsf{{{A}_{\gamma}}^{-1}} & 0            \\
    0                            & \mathsf{I}_l
  \end{bmatrix}
  \begin{bmatrix}
    \mathsf{I}_n & \mathsf{C^T}  \\
    0            & \mathsf{-I}_l
  \end{bmatrix}
  \begin{bmatrix}
    \mathsf{I}_n & 0                      \\
    0            & \gamma \mathsf{W^{-1}}
  \end{bmatrix},
$$
where $\mathsf{I}_n$ and $\mathsf{I}_l$ are identity matrices of size $n$ and $l$, respectively. The last identity implies that the application of the preconditioner to a vector requires one solve with $\mathsf{W}$, and one solve with the augmented term $\mathsf{{A_\gamma}}$. On top of that, we observe that this reformulation avoids the need to find a suitable approximation for the dense matrix $\mathsf{S = CA^{-1}C^T}$, a task that is particularly challenging in the context of interface non-fitted meshes. As already mentioned, the difficulty stems from the fact that the off-diagonal blocks in~\eqref{eqn:algebraic_form} involve integrals of basis functions that are not defined on the \emph{same} computational mesh, but rather on two grids that may overlap arbitrarily. %

The choice of the matrix $\mathsf{W}$ is crucial for the practical performance and applicability of the preconditioner. We propose to choose \begin{equation}\label{eqn:choice}\mathsf{W} \coloneqq \mathsf{M_{\lambda}^2},\end{equation} where
$\mathsf{M_{\lambda}}$ is the mass matrix on the immersed space $\Lambda_h \subset \Lambda(\Gamma)$, i.e.
\begin{equation}\label{eqn:mass_matrix_immersed}
  \bigl(\mathsf{M}_{\lambda}\bigr)_{\alpha,\beta} \coloneqq \int_{\Gamma} \psi_\alpha \psi_\beta, \qquad \alpha,\beta=1,\ldots,l.
\end{equation}
The reason for this choice will become clear in Section~\ref{sec:spectral_analysis}, where we discuss the Stokes case. The main idea is that the solutions of the generalized eigenvalue problem
\begin{equation}
  \begin{bmatrix}\label{eqn:matrix_aug}
    \mathsf{A_\gamma} & \mathsf{C^T} \\
    \mathsf{C}        & 0
  \end{bmatrix}
  \begin{bmatrix}
    \mathsf{x} \\
    \mathsf{y}
  \end{bmatrix}= \lambda
  \begin{bmatrix}
    \mathsf{{{A}_{\gamma}}} & \mathsf{C^T}                \\
    0                       & -\frac{1}{\gamma}\mathsf{W}
  \end{bmatrix} \begin{bmatrix}
    \mathsf{x} \\
    \mathsf{y}
  \end{bmatrix},
\end{equation}
are given by $\lambda=1$ and the associated  eigenvector $(\mathsf x, -\gamma \mathsf{W}^{-1} \mathsf{Cx})$, or $\lambda = \frac{\gamma\> \mathsf{x^T}   \mathsf{C^T} \mathsf{W}^{-1}\mathsf{C} \mathsf x }{\mathsf{x^T} (\mathsf{A} + \gamma \mathsf{C^T} \mathsf{W^{-1}} \mathsf{C} )\mathsf x}, $ for $\mathsf{x} \notin  \ker (\mathsf{C}) $. In Section~\ref{sec:spectral_analysis}, we will prove that choosing $\mathsf{W} \coloneqq \mathsf{M_\lambda^2}$ ensures that $\lambda$ remains bounded away from zero uniformly in $h_\Omega$ and $h_\Gamma$ (see Equation~\eqref{eq:secondcase}). This result holds for both the Poisson and the Stokes cases.\\
Due to the expensive solve associated especially with ${\mathsf{A}}_\gamma$, the preconditioner $\mathcal{P}_{\gamma}$ in~\eqref{eqn:preconditioner} is only \emph{ideal} and not of practical use. For this reason, it is necessary to replace the exact solves with approximate ones, resulting in a preconditioner which we denote as follows

\begin{equation}\label{eqn:preconditioner_approx}
  \mathcal{\widehat{P}}_{\gamma} \coloneqq
  \begin{bmatrix}
    \mathsf{{\widehat{A}_{\gamma}}} & \mathsf{C^T}                           \\
    0                               & -\frac{1}{\gamma} \mathsf{\widehat{W}}
  \end{bmatrix},
\end{equation}where $\mathsf{\widehat{A}_{\gamma}}$ and $\widehat{\mathsf{W}}$ are approximations of $\mathsf{A}_{\gamma}$ and $\mathsf{W}$, respectively, defined through the action of their inverses on vectors. The action of $\mathsf{A}_{\gamma}^{-1}$  is approximated through the conjugate-gradient (CG) solver preconditioned by a V-cycle of Algebraic MultiGrid (AMG). It is well known that, provided that a flexible GMRES solver is used to solve~\eqref{eqn:matrix_aug_p}, the inversion of the (1,1)-block {of the preconditioner} does not need to be done with high accuracy, so the tolerance for this block will be set to a rather loose value in the numerical experiments. Concerning the (2,2)-block, we observe that the number of degrees of freedom (DoF) associated with the immersed space $\Lambda_h$ is generally much lower than the number of DoF in the background space $V_h$. Indeed, while on a quasi-uniform mesh the number of background cells in $\Omega_h$ scales with $\mathcal{O}(h^{-d})$, the number of facets in $\Gamma_h$ scales with $\mathcal{O}(h^{-(d-1)})$. This would suggest that sparse direct solvers (such as UMFPACK~\cite{UMFPACK}) could still be employed for the inversion of $\mathsf{W} = \mathsf{M_\lambda^2}$, therefore it is not necessary to find an approximation $\widehat{\mathsf{ W}}$ also when the number of DoF in the background is large. When it becomes impractical, we choose to approximate $\mathsf{W}$ with a diagonal matrix whose entries are the squares of the main diagonal entries of $\mathsf{M_\lambda}$, i.e.
\begin{equation}\label{eqn:W_approx}
  \mathsf{\widehat{W}} \coloneqq \text{diag}(\mathsf{M_\lambda})^2.
\end{equation} Notably, if $\mathsf{\widehat{W}}$ is diagonal, the sparsity pattern of the augmented matrix $\mathsf{A_\gamma}$ remains close to the sparsity pattern of the original matrix $\mathsf{A}$. In particular, the product of terms $\mathsf{C^T}$ and $\mathsf{C}$ adds non-zero entries corresponding to DoF living on the
interface only. {\color{black}Note that} using a \emph{discontinuous} space $\Lambda_h$ for the Lagrange multiplier $\lambda$ automatically gives a diagonal $\mathsf{W}$.

\subsection{Spectrum of the preconditioned matrix}
We conduct numerical tests to evaluate the impact of the proposed preconditioner on the spectrum of $\mathcal{A_\gamma}$ ~\eqref{eqn:matrix_aug_p}.
We consider the Poisson problem~\eqref{eqn:model_problem} with the following configuration:
\begin{itemize}
  \item $\Omega = [0,1]^2$,
  \item $\omega = \mathcal{B}_{r}(\boldsymbol{c})$, where $\boldsymbol{c} = (0.5, 0.5)$ and $r = 0.21$.
\end{itemize}
We employ $\mathcal{Q}_1$ elements both for the background space $V_h$ and the immersed space $\Lambda_h$. We consider { four global refinements of the unit square, leading to $\mathsf{A} \in \mathbb{R}^{289\times289}$. The immersed mesh
    discretizing the interface consists of a uniform grid with $16$ facets, so that $\mathsf{C} \in \mathbb{R}^{17 \times 289}$, giving a global block matrix $\mathcal{A_\gamma} \in \mathbb{R}^{306\times306}$.} Above, $\mathcal{B}_{r}(\boldsymbol{c})$ denotes the ball of radius $r$ centered at $\boldsymbol{c}$.
We report in Figure~\ref{fig:laplace_spectrum} the spectrum of both $\mathcal{A_\gamma}$ and $\mathcal{P}_{\gamma}^{-1}\mathcal{A_\gamma}$ matrices for different values of the parameter $\gamma$ when using the \emph{ideal} variant with $\mathsf{W} =\mathsf{M_{\lambda}^2}$. {We point out that some of the eigenvalues of the original system are negative and close to zero, accordingly with the indefiniteness of the saddle point system.} The strong clustering of the whole spectrum near one when the parameter $\gamma$ increases is evident. This is consistent with the fact that $\gamma \mathsf{W^{-1}}$ increasingly provides a better approximation for the inverse of the Schur complement of $\mathcal{A_\gamma}$ as $\gamma$ becomes larger. Notably, all the eigenvalues { of the preconditioned system} are positive and real.

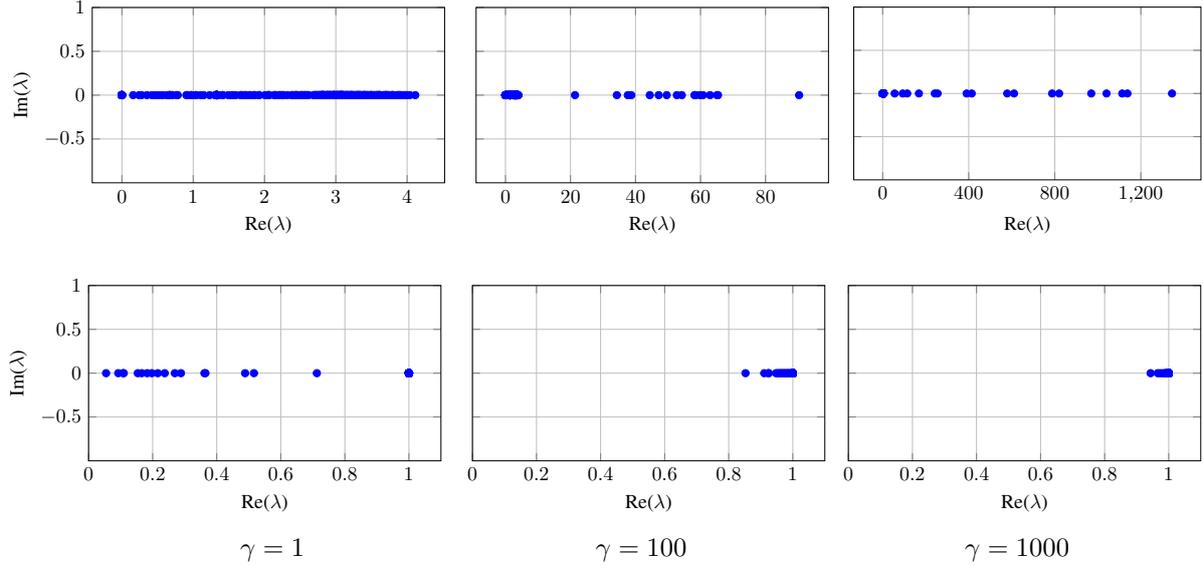
\begin{figure}[h]
  \centering
  \begin{subfigure}{0.3\textwidth} %
    \centering
    \begin{tikzpicture}[scale=0.68] %
      \begin{axis}[
          xlabel={Re($\lambda$)}, ylabel={Im($\lambda$)},
          width = 8.4cm,
          height = 5cm,
          grid=both,
        ]
        \addplot[only marks, mark=*, mark size=2pt, color=blue] table {data/laplace/gamma1.dat};
      \end{axis}
    \end{tikzpicture}
  \end{subfigure}\hspace{1.1cm} %
  \begin{subfigure}{0.3\textwidth}
    \centering
    \begin{tikzpicture}[scale=0.68]
      \begin{axis}[
          xlabel={Re($\lambda$)}, yticklabels={,,},
          width = 8.4cm,
          height = 5cm,
          grid=both,
        ]
        \addplot[only marks, mark=*, mark size=2pt, color=blue] table {data/laplace/gamma100.dat};
      \end{axis}
    \end{tikzpicture}
  \end{subfigure}\hspace{0.05cm} %
  \begin{subfigure}{0.3\textwidth}
    \centering
    \begin{tikzpicture}[scale=0.67]
      \begin{axis}[
          xlabel={Re($\lambda$)}, yticklabels={,,},
          width = 8.4cm,
          height = 5cm,
          grid=both,
          xtick distance=400,  %
        ]
        \addplot[only marks, mark=*, mark size=2pt, color=blue] table {data/laplace/gamma1000.dat};
      \end{axis}
    \end{tikzpicture}
  \end{subfigure}

  \vspace{0.4cm} %

  \begin{subfigure}{0.3\textwidth} %
    \centering
    \begin{tikzpicture}[scale=0.68] %
      \begin{axis}[
          xlabel={Re($\lambda$)}, ylabel={Im($\lambda$)},
          xmin=0, xmax=1.1,  %
          width = 8.4cm,
          height = 5cm,
          grid=both,
        ]
        \addplot[only marks, mark=*, mark size=2pt, color=blue] table {data/laplace/gamma1_prec.dat};
      \end{axis}
    \end{tikzpicture}
    \caption*{\hspace{2.2cm} $\gamma=1$}
  \end{subfigure}\hspace{1.1cm} %
  \begin{subfigure}{0.3\textwidth}
    \centering
    \begin{tikzpicture}[scale=0.68]
      \begin{axis}[
          xlabel={Re($\lambda$)}, yticklabels={,,},
          xmin=0, xmax=1.1,  %
          width = 8.4cm,
          height = 5cm,
          grid=both,
        ]
        \addplot[only marks, mark=*, mark size=2pt, color=blue] table {data/laplace/gamma100_prec.dat};
      \end{axis}
    \end{tikzpicture}
    \caption*{$\gamma=100$}
  \end{subfigure}\hspace{0.05cm} %
  \begin{subfigure}{0.3\textwidth}
    \centering
    \begin{tikzpicture}[scale=0.68]
      \begin{axis}[
          xlabel={Re($\lambda$)}, yticklabels={,,},
          xmin=0, xmax=1.1,  %
          width = 8.4cm,
          height = 5cm,
          grid=both,
        ]
        \addplot[only marks, mark=*, mark size=2pt, color=blue] table {data/laplace/gamma1000_prec.dat};
      \end{axis}
    \end{tikzpicture}
    \caption*{$\gamma=1000$}
  \end{subfigure}
  \vspace{0.1cm}
  \caption{Spectrum of the \emph{original} matrix $\mathcal{A_\gamma}$ (top row) and $\mathcal{P}_{\gamma}^{-1} \mathcal{A_\gamma}$ (bottom row) for increasing values of $\gamma$.}
  \label{fig:laplace_spectrum}
\end{figure}

\section{Stokes fictitious domain problem}\label{sec:stokes_problem}

In this Section, we extend the AL-based preconditioner developed in Section~\ref{sec:AL_preconditioner} to the Stokes problem. We use bold letters
to denote vector-valued functions, and we consider the following strong formulation:
\begin{equation}
  \begin{cases}\label{eqn:stokes_problem}
    \begin{aligned}
      -\Delta \mathbf{u} + \nabla p & = \mathbf{f}     &  & \text{in } \Omega\setminus \Gamma , \\
      \nabla \cdot \mathbf{u}       & = 0              &  & \text{in } \Omega\setminus \Gamma , \\
      \mathbf{u}                    & = \mathbf{g}     &  & \text{on } \Gamma,                  \\
      \mathbf{u}                    & = \boldsymbol{0} &  & \text{on } \partial \Omega,
    \end{aligned}
  \end{cases}
\end{equation}where $\mathbf{f} \in [L^2(\Omega)]^d$ and $\mathbf{g} \in [H^{\frac{1}{2}}(\Gamma)]^d$ are given data. The first two equations represent the classical Stokes problem, where $\mathbf{u}$ is a vector-valued function representing the velocity of the fluid, $p$ a scalar function representing its pressure, and $\mathbf{f}$ are external body forces. The third equation
imposes a constraint for the velocity field $\mathbf{u}$ on the immersed boundary $\Gamma$. \\ Concerning the incompressibility constraint, we observe
that the boundary datum $\mathbf{g}$ must satisfy the following compatibility condition
\begin{equation}\label{eqn:constraint_incompressibility}
  \int_{\Gamma} \mathbf{g}\cdot \boldsymbol{n}=0,
\end{equation}
where $\boldsymbol{n}$ is the outward unit normal to $\Gamma$.

We now give the functional spaces for the continuous problem~\eqref{eqn:stokes_problem}. For the velocity $\mathbf{u}$
we consider the usual choice $$V(\Omega)\coloneqq [H_0^1(\Omega)]^d =\{\mathbf{v} \in [H^1(\Omega)]^d\colon \mathbf{v}_{|\partial \Omega} = \mathbf{0}\}.$$ The pressure $p$ naturally belongs to $L^2(\Omega)$ and is determined up to a constant that we fix such that $p$ belongs to $$Q(\Omega)\coloneqq L_0^2(\Omega)= \Bigl\{ q \in L^2(\Omega)\colon \int_{\Omega} q =0 \Bigr\}.$$The functional space for the multiplier is chosen as $$\Lambda(\Gamma)\coloneqq [H^{-\frac{1}{2}}(\Gamma)]^d.$$

\noindent Problem~\eqref{eqn:stokes_problem} can be written as a constrained minimization problem by introducing the Lagrangian $\mathcal{L}: V(\Omega) \times Q(\Omega) \times \Lambda(\Gamma) \rightarrow \mathbb{R}$ defined as \begin{equation}
  \mathcal{L}(\mathbf{v},q,\boldsymbol{\mu})\coloneqq \frac{1}{2}\big(\nabla \mathbf{v}, \nabla \mathbf{v}\big)_{\Omega} -(\nabla \cdot \mathbf{v},q)_\Omega - (\mathbf{f},\mathbf{v} )_{\Omega} + \langle \boldsymbol{\mu}, \mathbf{v}-\mathbf{g} \rangle_{\Gamma}.
\end{equation}

\noindent Looking for stationary points of $\mathcal{L}$ gives the following \emph{double} saddle point problem of finding $(\mathbf{u}, p, \boldsymbol{\lambda}) \in V(\Omega) \times Q(\Omega) \times \Lambda(\Gamma)$ such that

\begin{equation}\label{eqn:LM_stokes_2}
  \begin{aligned}
    (\nabla \mathbf{v},  \nabla\mathbf{u})_{\Omega} - (\nabla \cdot \mathbf{v},p)_{\Omega} + \langle \boldsymbol{\lambda}, \mathbf{v}\rangle_{\Gamma} & = & (\mathbf{f},\mathbf{v})_{\Omega} \qquad \> \forall \mathbf{v} \in V(\Omega),                                \\
    (\nabla \cdot \mathbf{u},q)_{\Omega}                                                                                                              & = & 0 \qquad \qquad \>\> \> \forall q \in Q(\Omega),                                                            \\
    \langle  \boldsymbol{\mu}, \mathbf{u}\rangle_{\Gamma}                                                                                             & = & \langle \boldsymbol{\mu},\mathbf{g}\rangle_{\Gamma} \qquad \> \forall \boldsymbol{\mu} \in \Lambda(\Gamma).
  \end{aligned}
\end{equation}

{In order to verify the well-posedness of the double saddle point problem above, we need to fulfill two inf-sup conditions (see~\cite{boffi2013mixed}). The first inf-sup condition is the classical one for the Stokes problem.
\begin{proposition}\label{prop:inf-sup-stokes}
  There exists a constant $\beta_p > 0$ such that for all $q \in Q(\Omega)$
  \begin{equation}
    \label{eqn:inf_sup_div}
    \sup_{\mathbf{v} \in V(\Omega)} \frac{(\nabla \cdot \mathbf{v},q)_{\Omega}}{\|\mathbf{v}\|_{1,\Omega}} \geq \beta_p \|q\|_{0,\Omega}.
  \end{equation}
\end{proposition}
The second one is less standard and it is the inf-sup condition for the bilinear form $\langle  \boldsymbol{\mu}, \mathbf{v} \rangle_{\Gamma}$ associated with the Lagrange multiplier $\boldsymbol{\lambda}$. Its precise statement follows as a particular case of the more general setting presented in~\cite{BoffiGastaldiDLM} for the full fluid-structure interaction problem. Let us first set $$K_0 \coloneqq \{\mathbf{v} \in V(\Omega):  (\nabla \cdot \mathbf{v},q)_\Omega=0 \quad \forall q \in Q(\Omega) \} = \{\mathbf{v} \in V(\Omega):  \nabla \cdot \mathbf{v}=0 \text{ in } \Omega \},$$where
the second equality follows upon testing against $q= \nabla \cdot \mathbf{v} \in Q(\Omega)$, since $\mathbf{v} \in \bigl[H_0^1(\Omega)\bigr]^d$.

\begin{proposition}[\cite{BoffiGastaldiDLM}, Proposition 13]
  \label{prop:inf_sup_lambda}
  There exists a constant $\beta_\mu > 0$ such that for all $\boldsymbol{\mu} \in \Lambda(\Gamma)$ it holds
  \begin{equation}
    \sup_{\mathbf{v} \in K_0} \frac{\langle  \boldsymbol{\mu}, \mathbf{v} \rangle_{\Gamma}}{\|\mathbf{v}\|_{1,\Omega}} \geq \beta_\mu \|\boldsymbol{\mu}\|_{-\frac{1}{2},\Gamma}.
  \end{equation}
\end{proposition}
\begin{proof}Since $\Lambda(\Gamma) = [H^{-\frac{1}{2}}(\Gamma)]^d$, by definition of dual norm we have

  $$\|\boldsymbol{\mu}\|_{-\frac{1}{2},\Gamma} = \sup_{\mathbf{z} \in H^{1/2}(\Gamma)} \frac{\langle\boldsymbol{\mu}, \mathbf{z} \rangle_{\Gamma}}{\|\mathbf{z}\|_{\frac{1}{2},\Gamma}}.$$Let us now consider a maximizing sequence $\{\mathbf{z}_n\}_{n \in \mathbb{N}}$ such that

  \[\lim_{n \rightarrow +\infty} \frac{\langle\boldsymbol{\mu}, \mathbf{z}_n \rangle_{\Gamma}}{\|\mathbf{z}_n\|_{\frac{1}{2},\Gamma}} = \|\boldsymbol{\mu}\|_{-\frac{1}{2},\Gamma}.\]
  Thanks to the surjectivity of the trace operator from $[H_0^1(\Omega)]^d$ to $[H^{1/2}(\Gamma)]^d$, it is possible to show (cfr.~\cite[Lemma 12]{BoffiGastaldiDLM})
  the existence of $\mathbf{u}_n \in K_0$ such that $\mathbf{u}_{n|\Gamma}=\mathbf{z}_n$, with $\|\mathbf{u}_n\|_{1,\Omega} \leq c \|\mathbf{z}_n\|_{\frac{1}{2},\Gamma}$. The desired inequality is then obtained as follows:

  \[
    \sup_{\mathbf{v} \in  K_0} \frac{\langle \boldsymbol{\mu},\mathbf{v} \rangle_{\Gamma}}{\|\mathbf{v}\|_{1,\Omega}} \geq \frac{\langle \boldsymbol{\mu},\mathbf{u}_n \rangle_{\Gamma}}{\|\mathbf{u}_n\|_{1,\Omega}}
    \geq \frac{1}{c} \frac{\langle \boldsymbol{\mu},\mathbf{z}_n \rangle_{\Gamma}}{\|\mathbf{z}_n\|_{\frac{1}{2},\Gamma}} \geq \frac{1}{2c} \|\boldsymbol{\mu}\|_{-\frac{1}{2},\Gamma},
  \]where the last inequality follows from the fact that $\{\mathbf{z}_n\}_{n \in \mathbb{N}}$ is a maximizing sequence.

\end{proof}Having these two inf-sup conditions, we can state a \emph{combined inf-sup} condition involving $V(\Omega)$ and the graph space $M \coloneqq Q(\Omega)\times \Lambda(\Gamma)$ equipped with the natural norm $\|(q,\boldsymbol{\mu})\|^2_{M} \coloneqq \|q\|_{0,\Omega}^2 + \|\boldsymbol{\mu}\|_{-\frac{1}{2},\Gamma}^2$, which will be needed in Section~\ref{sec:spectral_analysis} for the spectral analysis of our preconditioner. This result is based on the abstract
framework developed in~\cite{CombinedInfSups} in the context of twofold saddle point problems, which we quote here in a simplified form for
ease of exposition.

\begin{theorem}[\cite{CombinedInfSups}, Theorem 3.1]
  \label{thm:combined_inf_sup_general}
  Let $U,P_1$ and $P_2$ be reflexive Banach spaces, and let $b_1 \colon U\times P_1 \rightarrow \mathbb{R}$, and $b_2\colon  U \times P_2 \rightarrow \mathbb{R}$ be bilinear and continuous forms. Let
  $$Z_{b_1} \coloneqq \{v \in U \colon b_1(v, q_1) = 0 \quad \forall q_1 \in P_1\} \subset U,$$
  then the following conditions are equivalent:
  \begin{enumerate}
    \item There exists $c > 0$ such that
          $$\sup_{v \in U} \frac{b_1(v,p_1) + b_2(v,p_2)}{\|v\|_U} \geq c (\|p_1\|_{P_1} + \|p_2\|_{P_2}) \qquad (p_1, p_2) \in P_1 \times P_2$$
    \item There exists $c > 0$ such that
          $$\sup_{v \in U} \frac{b_1(v,p_1)}{\|v\|_U} \geq c \|p_1\|_{P_1}, \quad p_1 \in P_1$$ and $$\sup_{v \in Z_{b_1}} \frac{b_2(v, p_2)}{\|v\|_U} \geq c \|p_2\|_{P_2}, \quad p_2 \in P_2.$$
  \end{enumerate}
\end{theorem}
We can therefore state the following combined inf-sup condition, which is a direct application of Proposition~\ref{prop:inf-sup-stokes} and Proposition~\ref{prop:inf_sup_lambda} above.

\begin{proposition}
  \label{thm:combined_inf_sup_stokes}
  There exists a constant $C > 0$ such that
  \begin{equation}\label{eqn:combined_inf_sup_stokes}
    \sup_{\mathbf{v} \in V(\Omega)} \frac{(\nabla \cdot \mathbf{v},q)_{\Omega} + \langle \boldsymbol{\mu},\mathbf{v} \rangle_{\Gamma}}{\|\mathbf{v}\|_{1,\Omega}} \geq C \|(q,\boldsymbol{\mu})\|_{M} \qquad (q,\boldsymbol{\mu}) \in M.
  \end{equation}
\end{proposition}
\begin{proof}
  We set $U=V(\Omega)$, $P_1=Q(\Omega)$, and $P_2=\Lambda(\Gamma)$ in the statement of Theorem~\ref{thm:combined_inf_sup_general}. As bilinear forms, it is sufficient to define $b_1(\mathbf{v}, q) \coloneqq (\nabla \cdot \mathbf{v},q)_{\Omega}$, and $b_2(\boldsymbol{\mu},\mathbf{v}) \coloneqq \langle \boldsymbol{\mu},\mathbf{v} \rangle_{\Gamma}$. Finally, we define $Z_{b_1}=K_0$.
  Then, Theorem~\ref{thm:combined_inf_sup_general} implies that there exists a constant $C > 0$ such that
  \[\sup_{\mathbf{v} \in V(\Omega)} \frac{(\nabla \cdot \mathbf{v},q)_{\Omega} + \langle \boldsymbol{\mu},\mathbf{v} \rangle_{\Gamma}}{\|\mathbf{v}\|_{1,\Omega}} \geq C \Bigl(\|q\|_{0,\Omega} + \|\boldsymbol{\mu}\|_{-\frac{1}{2},\Gamma} \Bigr) \qquad (q,\boldsymbol{\mu}) \in M.\]The thesis follows using the fact that $\|q\|_{0,\Omega} + \|\boldsymbol{\mu}\|_{-\frac{1}{2},\Gamma} \geq \sqrt{\|q\|_{0,\Omega}^2 + \|\boldsymbol{\mu}\|_{-\frac{1}{2},\Gamma}^2}  = \|(q,\boldsymbol{\mu})\|_{M}$.

\end{proof}

}

\subsection{Finite element discretization}\label{sec:discr_stokes_problem}We consider a finite element discretization based on mixed finite elements. In particular, we use the \emph{stable} Taylor-Hood $\mathcal{Q}_2$-$\mathcal{Q}_1$ pair, i.e. continuous piecewise quadratic velocities and linear pressures~\cite{boffi2013mixed}. We denote the associated finite-dimensional subspaces
with $V_h \subset V(\Omega)$ and $Q_h \not\subset Q(\Omega)$ (we do not ask functions of $Q_h$ to have zero average, therefore $Q_h \not\subset Q(\Omega)$). {Analogously to the Poisson problem,
    we employ as a finite-dimensional subspace for the multiplier
    \begin{equation}\label{eqn:Lambda_h_S}
      \Lambda_h \coloneqq \{\boldsymbol{\mu}_h \in [L^2(\Gamma)]^d\colon \boldsymbol{\mu}_h{|_K} \in [\mathcal{Q}_1(K)]^d, \forall K \in \Gamma_h\}.
    \end{equation}
  }

With these choices of finite element spaces, and denoting with $\{\boldsymbol{\varphi}_i\}_{i=1}^n$, $\{\phi_k\}_{k=1}^m$, and $\{\boldsymbol{\psi}_\alpha\}_{\alpha=1}^l$ the
selected basis functions of $V_h, Q_h,$ and $\Lambda_h$,  we end up with the following linear system of equations for $(\mathsf{u},\mathsf{p}, \mathsf{\uplambda})$

\begin{equation}
  \begin{bmatrix}\label{eqn:matrix}
    \mathsf{A} & \mathsf{B^T} & \mathsf{C^T} \\
    \mathsf{B} & 0            & 0            \\
    \mathsf{C} & 0            & 0
  \end{bmatrix}
  \begin{bmatrix}
    \mathsf{ u} \\
    \mathsf{ p} \\
    \mathsf{\uplambda}
  \end{bmatrix}=
  \begin{bmatrix}
    \mathsf{ f} \\
    \mathsf{ 0} \\
    \mathsf{ g}
  \end{bmatrix},
\end{equation}
where
\begin{eqnarray*}
  \mathsf{A}_{ij} &=& ( \nabla \boldsymbol{\varphi}_i, \nabla \boldsymbol{\varphi}_j)_{\Omega}   \qquad \>\> i,j=1,\dots,n \\
  \mathsf{B}_{ki} &=& -( \nabla \cdot \boldsymbol{\varphi}_i, \phi_k)_{\Omega}   \quad \quad i=1,\dots,n, \>\> k=1,\ldots,m \\
  \mathsf{C}_{\alpha i} &=& \langle  \boldsymbol{\varphi}_i, \boldsymbol{\psi}_\alpha \rangle_{\Gamma} \quad \qquad \quad i=1,\dots,n, \>\> \alpha = 1,\dots,l. \\
\end{eqnarray*}
As is customary in Immersed Boundary or Fictitious Domain methods, the velocity is fixed to $0$ on the background boundary, i.e. $\mathbf{u}_{|{\partial \Omega}}=\mathbf{0}$. Due to this particular choice, the pressure field $p$ is determined up to a constant. Choosing $Q$ to be in {$L^2_0(\Omega)$} fixes the uniqueness of the solution for the pressure in the continuous case, but not enforcing the zero average condition at the discrete level generates a matrix $\mathsf{B}$ which is not full row-rank and $0$ is an eigenvalue of the system. To summarize, $\mathsf{A} \in \mathbb{R}^{n \times n}$ is symmetric positive definite, $\mathsf{B} \in \mathbb{R}^{m \times n}$ is rank deficient by 1 and $\mathsf{C} \in \mathbb{R}^{l \times n}$. {As for the Poisson
    problem in Section~\ref{sec:poisson_problem}, the fact that $\mathsf{C}$ is full row-rank is a consequence of the fulfillment of the inf-sup condition for the velocity-multiplier pair.

    Sufficient conditions for the existence and uniqueness of the discrete problem are the discrete versions of the two inf-sup
    conditions for the continuous problem. Since the pair $V_h \times Q_h$ is stable, the discrete inf-sup condition for the Stokes problem is the
    standard discrete inf-sup condition for the divergence operator.
    \begin{proposition}\label{prop:discr_inf-sup-stokes}
      Let $V_h$ and $Q_h$ be defined as above, then there exists a positive constant $\beta_1$, independent of the mesh parameters, such that for all $q_h \in Q_h$
      it holds

      \begin{equation}\label{eqn:inf_sup_div_discrete}
        \sup_{\mathbf{v}_h \in V_h} \frac{(\nabla \cdot \mathbf{v}_h,q_h)_{\Omega}}{\|\mathbf{v}_h\|_{1,\Omega}} \geq \beta_1 \|q_h\|_{0,\Omega}.
      \end{equation}
    \end{proposition}
    The discrete analogue of Proposition~\eqref{prop:inf_sup_lambda}, showing the inf-sup condition involving the discrete Lagrange multiplier, is
    a particular case of ~\cite[Proposition 16]{BoffiGastaldiDLM}. We start by introducing the discrete version of the kernel $K_0$

    $$K_{0,h} \coloneqq \{\mathbf{v}_h \in V_h:  (\nabla \cdot \mathbf{v}_h,q_h)_{\Omega}=0  \quad \forall q_h \in Q_h \} \subset V_h.$$

    \begin{proposition}[\cite{BoffiGastaldiDLM}, Proposition 16]
      \label{prop:inf_sup_lambda_h}
      Let $V_h,Q_h$ and $\Lambda_h$ be defined as above. If $h_{\Omega}/h_{\Gamma}$ is sufficiently small and the mesh $\Gamma_h$ is
      quasi-uniform, then  there exists a constant $\beta_2 > 0$, independent of $h_{\Omega}$ and $h_{\Gamma}$, such that for all $\boldsymbol{\mu}_h \in \Lambda_h$ it holds

      \begin{equation}\label{eqn:inf_sup_lambda_h}
        \sup_{\mathbf{v}_h \in K_{0,h}} \frac{\langle \boldsymbol{\mu}_h, \mathbf{v}_h \rangle_{\Gamma}}{\|\mathbf{v}_h\|_{1,\Omega}} \geq \beta_2 \|\boldsymbol{\mu}_h\|_{-\frac{1}{2},\Gamma}.
      \end{equation}
    \end{proposition}

    \begin{remark}
      By definition, $K_{0,h} \subset V_h$. This allows us to rewrite the inf-sup condition in~\eqref{eqn:inf_sup_lambda_h} on the whole $V_h$. More precisely, for all $\boldsymbol{\mu}_h \in \Lambda_h$ it holds
      \begin{equation}\label{eqn:inf_sup_lambda_h_V_h}
        \beta_2 \|\boldsymbol{\mu}_h\|_{-\frac{1}{2},\Gamma} \leq  \sup_{\mathbf{v}_h \in K_{0,h}} \frac{\langle \boldsymbol{\mu}_h, \mathbf{v}_h \rangle_{\Gamma}}{\|\mathbf{v}_h\|_{1,\Omega}} \leq \sup_{\mathbf{v}_h \in V_{h}} \frac{\langle \boldsymbol{\mu}_h, \mathbf{v}_h \rangle_{\Gamma}}{\|\mathbf{v}_h\|_{1,\Omega}}.
      \end{equation}
    \end{remark}

    Proceeding exactly as in the continuous case, we can exploit the two individual discrete inf-sup conditions~\eqref{eqn:inf_sup_div_discrete} and~\eqref{eqn:inf_sup_lambda_h}, use
    Theorem~\ref{thm:combined_inf_sup_general}, and obtain a combined inf-sup condition for the discrete case. We denote
    with $M_h \coloneqq Q_h \times \Lambda_h$ the discrete version of the graph space $M$.
    \begin{proposition}
      \label{prop:combined_inf_sup_stokes_discrete}
      Let $V_h,Q_h$ and $\Lambda_h$ be defined as above. If $h_{\Omega}/h_{\Gamma}$ is sufficiently small and the mesh $\Gamma_h$ is
      quasi-uniform, then there exists a constant $\beta_3 > 0$, independent of $h_{\Omega}$ and $h_{\Gamma}$, such that
      for all $(q_h,\boldsymbol{\mu}_h) \in M_h$ it holds

      \begin{equation}\label{eqn:combined_inf_sup_stokes_discrete}
        \sup_{\mathbf{v}_h \in V_h} \frac{(\nabla \cdot \mathbf{v}_h,q_h)_{\Omega} + \langle \boldsymbol{\mu}_h,\mathbf{v}_h \rangle_{\Gamma}}{\|\mathbf{v}_h\|_{1,\Omega}} \geq \beta_3 \|(q_h,\boldsymbol{\mu}_h)\|_{M}.
      \end{equation}

    \end{proposition}

  }

\begin{remark}[Finite element spaces]
  Concerning the particular choice of the finite-dimensional subspaces, we notice that other choices are possible, such as $\mathcal{Q}_2$-$\mathcal{P}_{1}$, which use discontinuous piecewise linear pressures~\cite{BoffiGastaldiQ2P1}. However, we stress that the forthcoming spectral analysis { is generic and} relies
  only on the choice of inf-sup stable elements.
\end{remark}

\subsection{Augmented Lagrangian preconditioner}\label{sec:algebraic_form_stokes}
Consider the $(n+m+l)\times (n+m+l)$ linear system of equations~\eqref{eqn:matrix}. To apply the same reasoning used for the Poisson case, we propose augmenting the (1,1)-block twice and reformulating the Stokes fictitious domain problem as the following equivalent system
\begin{equation}
  \begin{bmatrix}\label{eqn:algebraic_form_stokes}
    \mathsf{A}+\gamma \mathsf{B^T} \mathsf{Q^{-1}} \mathsf{B} +\delta \mathsf{C^T} \mathsf{W^{-1}} \mathsf{C} & \mathsf{B^T} & \mathsf{C^T} \\
    \mathsf{B}                                                                                                & 0            & 0            \\
    \mathsf{C}                                                                                                & 0            & 0
  \end{bmatrix}
  \begin{bmatrix}
    \mathsf{ u} \\
    \mathsf{ p} \\
    \mathsf{ \lambda}
  \end{bmatrix}=
  \begin{bmatrix}
    \mathsf{ f}+\delta \mathsf{C^T} \mathsf{W^{-1}} \mathsf{ g} \\
    \mathsf{ 0}                                                 \\
    \mathsf{ g}
  \end{bmatrix} \quad \text{or} \quad \mathcal{A_{\gamma\delta}}\> \mathsf{x}=\mathsf{\hat{b}}.
\end{equation} Both $\mathsf{Q}$ and $\mathsf{W}$ are arbitrary SPD matrices and $\gamma >0$, $\delta >0$ are two real parameters to be selected. Then, we propose the following preconditioner:
\begin{equation}\label{eqn:prec_AL_stokes}
  \mathcal{P_{ \gamma \delta}} =
  \begin{bmatrix}
    \mathsf{A_{\gamma\delta}} & \mathsf{B^T}                & \mathsf{C^T}                \\
    0                         & -\frac{1}{\gamma}\mathsf{Q} & 0                           \\
    0                         & 0                           & -\frac{1}{\delta}\mathsf{W}
  \end{bmatrix},
\end{equation}
which has inverse
\begin{equation}\label{eqn:factor}
  \mathcal{P}_{\gamma\delta}^{-1}
  =
  \begin{bmatrix}
    \mathsf{{A}_{\gamma\delta}^{-1}} & 0            & 0            \\
    0                                & \mathsf{I}_m & 0            \\
    0                                & 0            & \mathsf{I}_l
  \end{bmatrix}
  \begin{bmatrix}
    \mathsf{I}_n & \mathsf{B^T}  & \mathsf{C^T}  \\
    0            & -\mathsf{I}_m & 0             \\
    0            & 0             & -\mathsf{I}_l
  \end{bmatrix}
  \begin{bmatrix}
    \mathsf{I}_n & 0                      & 0                      \\
    0            & \gamma \mathsf{Q^{-1}} & 0                      \\
    0            & 0                      & \delta \mathsf{W^{-1}}
  \end{bmatrix},
\end{equation}with $\mathsf{A_{\gamma\delta}} \coloneqq \mathsf{A}+\gamma \mathsf{B^T} \mathsf{Q^{-1}} \mathsf{B} +\delta \mathsf{C^T} \mathsf{W^{-1}} \mathsf{C}$. It is well known~\cite{ElmanSilvesterWathen} that for LBB-stable discretizations of the Stokes problem, if the Schur complement {$\mathsf{S_B}=\mathsf{B A^{-1}B^T}$} is nonsingular, it is spectrally equivalent to the pressure mass matrix $\mathsf{M_p}$. Otherwise, spectral equivalence holds on {$\operatorname{Ran}(\mathsf{B})$}{, the range of $\mathsf{B}$.} We set $\mathsf{Q\coloneqq M_p}$. Motivated by its effectiveness in the scalar case investigated in Section~\eqref{sec:poisson_problem} for the Poisson problem, we set again $\mathsf{W \coloneqq M_{\lambda}^2}$, defined analogously to the scalar case in~\eqref{eqn:mass_matrix_immersed} as
\begin{equation}\label{eqn:mass_matrix_immersed_stokes}
  \bigl(\mathsf{M}_{\lambda}\bigr)_{\alpha,\beta} \coloneqq \int_{\Gamma} \boldsymbol{\psi}_\alpha \cdot \boldsymbol{\psi}_\beta \qquad \alpha,\beta=1,\ldots,l.
\end{equation}All in all, {\color{black} from Equation~\eqref{eqn:factor} it is evident that} the application of the proposed preconditioner $\mathcal{P_{ \gamma \delta}}$ to a vector requires one solve with $\mathsf{A_{\gamma\delta}}$, one with $\mathsf{Q}$, and one with $\mathsf{W}$, plus two sparse matrix-vector products. The considerations made for the scalar case in Section~\ref{sec:poisson_problem} also apply identically in this case. In particular, the overall complexity is shifted to the solution of the augmented velocity in the (1,1)-block.

{\color{black} Of course, one could think of avoiding
assembling $\mathsf{A_{\gamma\delta}}$ and instead use it as an operator, but this would prevent the usage of preconditioners such as AMG or ILU decompositions. \\ We first focus on the practical construction of $\mathsf{B^T} \mathsf{Q^{-1}} \mathsf{B}$, which appears in the augmentation of the (1,1)-block. Note that if $\mathsf{Q^{-1}}$ is a dense matrix, then the product $\mathsf{B^T} \mathsf{Q^{-1}} \mathsf{B}$ will also be dense. Furthermore, even when a diagonal approximation of $\mathsf{Q}$ is used, the product $\mathsf{B^TB}$ may contain significantly more non-zero entries than $\mathsf{A}$ itself.

A possible remedy to this issue is to use a \emph{Grad-Div} stabilization. As discussed
in~\cite{graddiv}, the \emph{Grad-Div} stabilization can be interpreted as the sum of { an algebraic term, given by $(\mathsf{B^T} \mathsf{Q^{-1}} \mathsf{B})_{ij}$ $\forall i,j \in \{1, ..., n\}$}, and a projection-type stabilization term; see also~\cite{Reusken}. This additional projection term vanishes asymptotically as the mesh is refined, making it reasonable to use the same AL approach to build the preconditioner, but without explicitly adding the augmentation to the (1,1)-block. Specifically, the contribution $\gamma \bigl(\nabla \cdot \boldsymbol{\varphi}_i,\nabla \cdot \boldsymbol{\varphi}_j \bigr)_{i,j}$ is directly added to the global matrix $\mathsf{A}$ during the assembly phase of the system, thus avoiding the issues related to the explicit computation of the product $\mathsf{B^T} \mathsf{Q^{-1}} \mathsf{B}$. Replacing this augmentation by  \emph{Grad-Div} stabilization implies that new entries are added only where the components of the velocity couple.\\ Figure~\ref{fig:sparsity_comparison} illustrates the sparsity pattern of matrix $\mathsf{A}$ when \emph{Grad-Div} stabilization is included (see Figure~\ref{fig:sparsity1}) and compares it to the sparsity pattern of the augmented matrix $\mathsf
  {A}+\mathsf{B^T} \mathsf{Q^{-1}} \mathsf{B}$ when $\mathsf{A}$ does not include the stabilization term, { and $\mathsf{Q}$ is replaced with its main diagonal} (see Figure~\ref{fig:sparsity2}). %

On the other hand, we explicitly construct the term $\mathsf{C^T W^{-1} C}$. When a diagonal approximation of $\mathsf{W}$ is used, as will be the case in Section~\ref{subsec:stokes}, the influence of $\mathsf{C^T C}$ on the sparsity pattern of the whole augmented block is minimal, especially compared to the influence of $\mathsf{B^T B}$. This is clearly shown in Figure~\ref{fig:sparsity3}: the increase in the total number of non-zero entries in the final augmented block, where $\mathsf{A}$ includes the \emph{Grad-Div} stabilization term, is much more favorable. In practice, we will experience the fill-in shown in Figure~\ref{fig:sparsity3}.}

Once the whole $\mathsf{A_{\gamma\delta}}$ term is assembled, it is solved with CG preconditioned by a single V-cycle of AMG, using a very
loose tolerance (such as $10^{-2}$).

\begin{figure}[h]
  \centering
  \begin{subfigure}{0.32\textwidth}
    \centering
    \includegraphics[width=\linewidth]{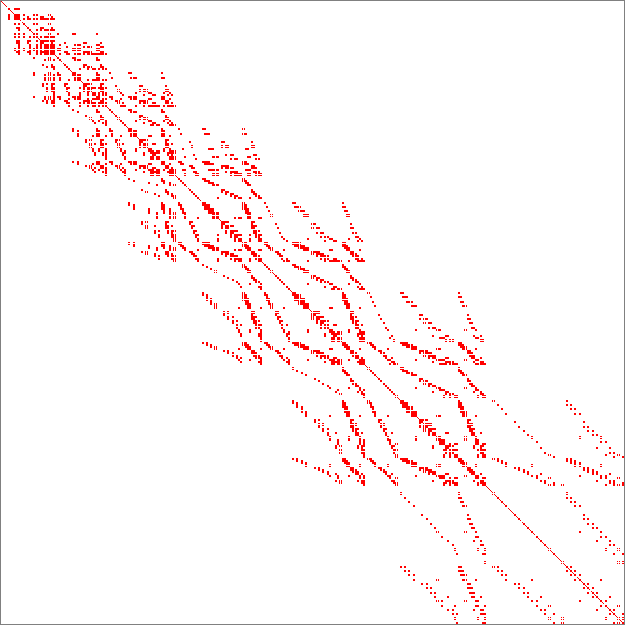}
    \caption{$\mathsf{A_{\text{GD}}}$ \\ \small NNZ: 12,228}
    \label{fig:sparsity1}
  \end{subfigure}
  \begin{subfigure}{0.32\textwidth}
    \centering
    \includegraphics[width=\linewidth]{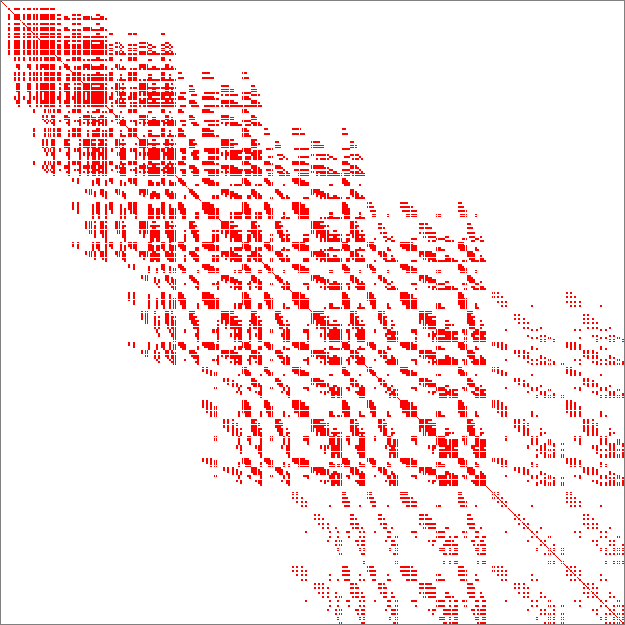}
    \caption{$\mathsf{A + B^T B}$ \\ \small NNZ: 40,168}
    \label{fig:sparsity2}
  \end{subfigure}
  \begin{subfigure}{0.32\textwidth}
    \centering
    \includegraphics[width=\linewidth]{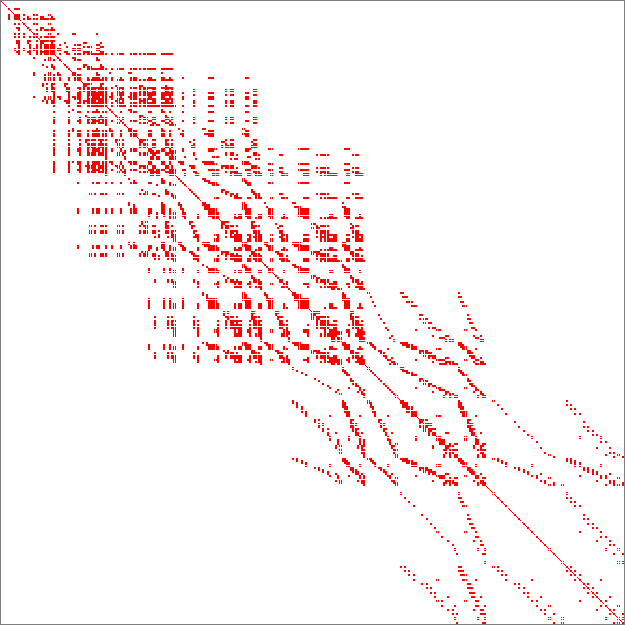}
    \caption{$\mathsf{A_{\text{GD}}+ C^T C}$ \\ \small NNZ: 16,612}
    \label{fig:sparsity3}
  \end{subfigure}

  \caption{{\color{black} Comparison of resulting sparsity patterns after different augmentations. Here we considered $\mathsf{A} \in \mathbb{R}^{578 \times 578}$, $\mathsf{B} \in \mathbb{R}^{81 \times 578}$, and $\mathsf{C} \in \mathbb{R}^{34\times 578}$. The resulting matrices stem from the discretization of the Stokes fictitious domain problem, using the configuration in Section~\ref{subsec:spectrum}. The matrix $\mathsf{A_{\text{GD}}}$ denotes the matrix $\mathsf{A}$ augmented with the Grad-Div stabilization.}}
  \label{fig:sparsity_comparison}
\end{figure}

\begin{remark}[Symmetric preconditioning]
  \label{rmk:SPD_variant}
  The preconditioned system is non-symmetric since we are dealing with a block triangular preconditioner. Therefore, eigenvalue information alone is not sufficient to rigorously justify the mesh independence of the convergence of non-symmetric matrix iterations
  like GMRES~\cite{GMRES_eigs}. For this reason, we have tested a symmetric and positive definite {\color{black}(block diagonal)} variant of our preconditioner, {namely
      \begin{equation}\label{eqn:prec_diagona}
        \mathcal{P_{\text{sym}, \gamma \delta}} =
        \begin{bmatrix}
          \mathsf{A_{\gamma\delta}} & 0                          & 0                          \\
          0                         & \frac{1}{\gamma}\mathsf{Q} & 0                          \\
          0                         & 0                          & \frac{1}{\delta}\mathsf{W}
        \end{bmatrix}.
      \end{equation}}
  In fact, in this case, the preconditioned matrix is similar to a symmetric matrix, and the eigenvalues provide meaningful
  estimates of the convergence rate of a method like preconditioned MINRES. We note that while mesh independence is observed, the total number of outer iterations for the global system solved with MINRES preconditioned with the symmetric positive definite variant of $\mathcal{P_{\gamma \delta}}$
  is higher than that obtained with FGMRES preconditioned by $\mathcal{P_{\gamma \delta}}$. {In addition, note that the MINRES solver requires the preconditioner to be applied exactly (or iteratively but in an accurate way), meaning that the augmented block $\mathsf{A_{\gamma \delta}}$ needs to be solved with a stricter tolerance. This results in a high number of inner iterations. Hence,} the block triangular preconditioner~\eqref{eqn:prec_AL_stokes} outperforms the block diagonal one in terms of computational cost. Even if we lost the possible advantage of $\mathcal{A}_{\gamma\delta}$ being symmetric, using a method like FGMRES with block triangular preconditioning leads --  in practice -- to more rapid convergence.
\end{remark}

\section{Spectral analysis}\label{sec:spectral_analysis}

In this Section, we derive lower and upper bounds for the eigenvalues of the preconditioned matrix $\mathcal{P_{ \gamma \delta}}^{-1} \mathcal{A}_{\gamma \delta}$ for the Stokes problem.  Although
eigenvalue information alone is generally insufficient to predict the convergence behaviour of nonsymmetric Krylov subspace methods, practical experience suggests that convergence is often fast when the spectrum is real, positive, and confined within a moderately sized interval {\color{black}bounded away from 0}. In the following, we
denote with $\operatorname{Spec}(\mathsf{M})$ the spectrum of a generic square matrix $\mathsf{M}$.

The next result, concerning the spectrum of the preconditioned matrix, is purely algebraic {and does not depend on any physical interpretation of the matrices involved. In Section~\ref{sec:boundedness},
        we will refine this analysis for the particular case in which such matrices correspond to those arising from the Stokes
        fictitious domain formulation. In that case, it will be possible to prove that the eigenvalues of the preconditioned system
        are bounded away from zero uniformly in the discretization parameters.}

\begin{theorem}\label{thm:spectral_AL}
    Suppose that $\mathcal{A}_{\gamma \delta}$ and $\mathcal{P}_{\gamma \delta}$ are defined by the matrices in~\eqref{eqn:algebraic_form_stokes} and~\eqref{eqn:prec_AL_stokes}, respectively. The non-zero eigenvalues of $\mathcal{P}^{-1}_{ \gamma \delta} \mathcal{A}_{\gamma \delta}$ are all real and positive. More precisely, {let $(\mathsf{x; y; z})$ be an eigenvector of $\mathcal{P}_{\gamma \delta}^{-1} \mathcal{A}_{\gamma \delta}$, it holds}
    $$\operatorname{Spec}(\mathcal{P}_{\gamma \delta}^{-1} \mathcal{A}_{\gamma \delta}) \subseteq \left [ \min (\eta, \epsilon, \theta), 1 \right ], $$
    where
    $$\eta = \min \left \{\frac{\gamma \mathsf{x^T}  \mathsf{B^T} \mathsf{Q}^{-1}\mathsf{B} \mathsf x }{\mathsf{x^T} (\mathsf{A}  + \gamma \mathsf{B^T} \mathsf{Q^{-1}} \mathsf{B} )\mathsf x } \;\biggm|\; \mathsf x \in  \ker(\mathsf{C}) \setminus  \ker(\mathsf{B})\right \},$$
    \vspace{0.2cm}
    $$\epsilon = \min \left \{\frac{\delta \mathsf{x^T}  \mathsf{C^T} \mathsf{W}^{-1}\mathsf{C} \mathsf x }{\mathsf{x^T} (\mathsf{A}  + \delta \mathsf{C^T} \mathsf{W^{-1}} \mathsf{C} )\mathsf x } \;\biggm|\; \mathsf x \in  \ker(\mathsf{B}) \setminus  \ker(\mathsf{C})\right \},$$
    \vspace{0.2cm}
    $$\theta = \min \left \{\frac{\mathsf{x^T} (\gamma  \mathsf{B^T} \mathsf{Q}^{-1}\mathsf{B}  + \delta  \mathsf{C^T} \mathsf{W}^{-1}\mathsf{C}) \mathsf x}{\mathsf{x^T}(\mathsf{A}+\gamma \mathsf{B^T} \mathsf{Q^{-1}} \mathsf{B} +\delta \mathsf{C^T} \mathsf{W^{-1}} \mathsf{C})\mathsf x} \;\biggm|\; \mathsf x \not\in  \ker (\mathsf{B}) \cup  \ker (\mathsf{C}) \right \},$$
    \vspace{0.2cm}
    \noindent
    and with $\lambda = 1$ being an eigenvalue of algebraic multiplicity at least $n$.
\end{theorem}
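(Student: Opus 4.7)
The plan is to analyze directly the generalized eigenvalue problem $\mathcal{A}_{\gamma\delta}\mathsf{v}=\lambda\,\mathcal{P}_{\gamma\delta}\mathsf{v}$ with $\mathsf{v}=(\mathsf{x},\mathsf{y},\mathsf{z})^T$, exploiting the crucial fact that $\mathcal{A}_{\gamma\delta}$ and $\mathcal{P}_{\gamma\delta}$ share the same top block row. Writing out the three block equations, the first row collapses to $(1-\lambda)(\mathsf{A}_{\gamma\delta}\mathsf{x}+\mathsf{B}^T\mathsf{y}+\mathsf{C}^T\mathsf{z})=0$, while the second and third yield $\mathsf{B}\mathsf{x}=-(\lambda/\gamma)\mathsf{Q}\mathsf{y}$ and $\mathsf{C}\mathsf{x}=-(\lambda/\delta)\mathsf{W}\mathsf{z}$. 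This forces a natural case split on whether $\lambda=1$.

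In the case $\lambda=1$, the first block equation is automatically satisfied, and the remaining two rows determine $\mathsf{y}=-\gamma\,\mathsf{Q}^{-1}\mathsf{B}\mathsf{x}$ and $\mathsf{z}=-\delta\,\mathsf{W}^{-1}\mathsf{C}\mathsf{x}$, so every $\mathsf{x}\in\mathbb{R}^n$ generates an eigenvector. Choosing the $n$ canonical basis vectors of $\mathbb{R}^n$ yields $n$ linearly independent eigenvectors (independence follows from that of their first components), which establishes that the geometric, and hence algebraic, multiplicity of the eigenvalue $1$ is at least $n$.

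For $\lambda\neq 1$, the first equation becomes the constraint $\mathsf{A}_{\gamma\delta}\mathsf{x}+\mathsf{B}^T\mathsf{y}+\mathsf{C}^T\mathsf{z}=0$. Assuming also $\lambda\neq 0$, one can invert the last two rows to obtain $\mathsf{y}=-(\gamma/\lambda)\mathsf{Q}^{-1}\mathsf{B}\mathsf{x}$ and $\mathsf{z}=-(\delta/\lambda)\mathsf{W}^{-1}\mathsf{C}\mathsf{x}$, substitute these into the constraint, and use the definition of $\mathsf{A}_{\gamma\delta}$ and multiplication by $\lambda$ to obtain
\begin{equation*}
\lambda\,\mathsf{A}\mathsf{x}+(\lambda-1)\bigl(\gamma\,\mathsf{B}^T\mathsf{Q}^{-1}\mathsf{B}+\delta\,\mathsf{C}^T\mathsf{W}^{-1}\mathsf{C}\bigr)\mathsf{x}=0.
\end{equation*}
Testing against $\mathsf{x}^T$ and setting $a\coloneqq\mathsf{x}^T\mathsf{A}\mathsf{x}>0$ and $b\coloneqq\mathsf{x}^T(\gamma\mathsf{B}^T\mathsf{Q}^{-1}\mathsf{B}+\delta\mathsf{C}^T\mathsf{W}^{-1}\mathsf{C})\mathsf{x}\ge 0$ delivers the Rayleigh-quotient identity $\lambda=b/(a+b)$. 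This shows that every such eigenvalue is real, non-negative, and strictly less than $1$, and identifies $\lambda=0$ with $\mathsf{x}\in\ker(\mathsf{B})\cap\ker(\mathsf{C})$. Partitioning the non-zero branch according to whether $\mathsf{x}$ lies in $\ker(\mathsf{C})\setminus\ker(\mathsf{B})$, in $\ker(\mathsf{B})\setminus\ker(\mathsf{C})$, or outside $\ker(\mathsf{B})\cup\ker(\mathsf{C})$, and simplifying $b$ in the first two sub-cases using the vanishing of $\mathsf{B}\mathsf{x}$ or $\mathsf{C}\mathsf{x}$, produces exactly the three Rayleigh quantities $\eta$, $\epsilon$, $\theta$ in the statement; the global lower bound on the non-zero spectrum is then their minimum.

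The main obstacle I anticipate is the bookkeeping around $\lambda=0$: the substitution step requires dividing by $\lambda$ and is therefore only legitimate away from zero, so one has to confirm separately that any $\mathsf{x}\in\ker(\mathsf{B})\cap\ker(\mathsf{C})$ really corresponds to a genuine null eigenvector (which is permitted, since only non-zero eigenvalues appear in the bound), and that no non-zero eigenvalue is double-counted or missed by the $\lambda=1$ versus $\lambda\neq 1$ dichotomy. The remainder reduces to elementary Rayleigh-quotient manipulations.
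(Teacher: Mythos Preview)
Your proposal is correct and follows essentially the same route as the paper: write out the block equations of the generalized eigenproblem, observe that the shared first row forces either $\lambda=1$ or the constraint $\mathsf{A}_{\gamma\delta}\mathsf{x}+\mathsf{B}^T\mathsf{y}+\mathsf{C}^T\mathsf{z}=0$, eliminate $\mathsf{y},\mathsf{z}$ via the last two rows (for $\lambda\neq 0$), and test against $\mathsf{x}^T$ to obtain the Rayleigh-quotient identity that is then specialized according to kernel membership of $\mathsf{x}$. Your treatment of the multiplicity of $\lambda=1$ via $n$ independent eigenvectors is in fact slightly cleaner than the paper's case-by-case listing; the only point to tighten is the explicit verification that $\mathsf{x}\neq 0$ for $\lambda\neq 1$ (the paper checks this directly from the invertibility of $\mathsf{Q},\mathsf{W}$), which you use implicitly when asserting $a>0$.
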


\begin{proof}
    Let $\lambda$ be an arbitrary eigenvalue of $\mathcal{P}^{-1}_{ \gamma \delta} \mathcal{A}_{\gamma \delta}$ with a corresponding eigenvector $(\mathsf x; \mathsf y; \mathsf z)$.
    The generalized eigenvalue problem can be written as
    \begin{equation}
        \begin{bmatrix}\label{eqn:eigenproblem}
            \mathsf{A_{\gamma\delta}} & \mathsf{B^T} & \mathsf{C^T} \\
            \mathsf{B}                & 0            & 0            \\
            \mathsf{C}                & 0            & 0
        \end{bmatrix}
        \begin{bmatrix}
            {\mathsf x} \\
            {\mathsf y} \\
            {\mathsf z}
        \end{bmatrix}= \lambda
        \begin{bmatrix}
            \mathsf{A_{\gamma\delta}} & \mathsf{B^T}                 & \mathsf{C^T}                 \\
            0                         & -\frac{1}{\gamma} \mathsf{Q} & 0                            \\
            0                         & 0                            & -\frac{1}{\delta} \mathsf{W}
        \end{bmatrix}
        \begin{bmatrix}
            {\mathsf x} \\
            {\mathsf y} \\
            {\mathsf z}
        \end{bmatrix},
    \end{equation}
    which can be written explicitly as
    \begin{align}
        \mathsf{A_{\gamma\delta}} \mathsf x + \mathsf{B^T} \mathsf y + \mathsf{C^T} \mathsf z & = \lambda (\mathsf{A_{\gamma\delta}} \mathsf x + \mathsf{B^T} \mathsf y + \mathsf{C^T} \mathsf z), \label{eqn:barAx} \\
        \mathsf{B} \mathsf x                                                                  & = -\frac{\lambda}{\gamma} \> \mathsf{Q} \mathsf y,\label{eqn:Bx}                                                     \\
        \mathsf{C} \mathsf x                                                                  & = -\frac{\lambda}{\delta } \> \mathsf{W} \mathsf z.  \label{eqn:Cx}
    \end{align}

    Note that the zero eigenvalue of the matrix in~\eqref{eqn:algebraic_form_stokes} related to the rank deficiency of $\mathsf{B}$ does not affect the convergence of preconditioned GMRES and can be excluded from the discussion~\cite{modALprec,ElmanSilvesterWathen}.\\
    Notice that $\mathsf x \ne \mathsf 0$; otherwise, in view of the fact that $\mathsf{Q}$ and $\mathsf{W}$ are SPD matrices, $\mathsf x=\mathsf 0$ implies $(\mathsf x; \mathsf y; \mathsf z) = (\mathsf 0; \mathsf 0; \mathsf 0)$, in contradiction with the fact that $(\mathsf x; \mathsf y; \mathsf z)$ is an eigenvector. {\color{black}Hence, from now on we assume that $\mathsf{x} \ne 0$.}\\
    It is clear from equations~\eqref{eqn:barAx}, \eqref{eqn:Bx}, and~\eqref{eqn:Cx} that $\lambda = 1$ is an eigenvalue of $\mathcal{P}^{-1}_{ \gamma \delta} \mathcal{A}_{\gamma \delta}$ with the corresponding eigenvector $(\mathsf x; -\gamma \mathsf{Q}^{-1} \mathsf{B} \mathsf x; -\delta \mathsf{W}^{-1} \mathsf{C}\mathsf x)$ when $\mathsf x \not\in  \ker(\mathsf{B}) \cup  \ker(\mathsf{C})$. Obviously, $\lambda = 1$ is an eigenvalue associated also with the eigenvector $(\mathsf x; 0; -\delta \mathsf{W}^{-1} \mathsf{C}\mathsf x)$ when $\mathsf x \in  \ker(\mathsf{B}) \setminus  \ker(\mathsf{C})$, with the eigenvector $(\mathsf x; -\gamma \mathsf{Q}^{-1} \mathsf{B} \mathsf x; \mathsf 0)$ when $\mathsf x \in  \ker(\mathsf{C}) \setminus  \ker(\mathsf{B})$ and with the eigenvector $(\mathsf x; \mathsf 0; \mathsf 0)$ when $\mathsf x \in  \ker(\mathsf{B}) \cap  \ker(\mathsf{C})$.
    The algebraic multiplicity of this eigenvalue is therefore at least $n$.\\ From now on, we assume that $\lambda \ne 1$ (and $\lambda \ne 0$). From~\eqref{eqn:barAx} we derive
    \begin{equation} \label{eqn:only_x}
        \mathsf{A_{\gamma\delta}} \mathsf x + \mathsf{B^T} \mathsf y + \mathsf{C^T} \mathsf z = 0,
    \end{equation}
    whereas from~\eqref{eqn:Bx} and~\eqref{eqn:Cx}, we respectively obtain
    $$\mathsf y = -\frac{\gamma}{\lambda} \mathsf{Q}^{-1} \mathsf{B} \mathsf x \quad \text{and} \quad \mathsf z = -\frac{\delta}{\lambda} \mathsf{W}^{-1} \mathsf{C} \mathsf x.$$
    By substituting the preceding two relations in~\eqref{eqn:only_x}, it follows that
    \begin{equation}\label{eqn:11}
        \mathsf{A_{\gamma\delta}}\> \mathsf x - \frac{\gamma}{\lambda} \mathsf{B^T} \mathsf{Q}^{-1}\mathsf{B}\> \mathsf x - \frac{\delta}{\lambda} \mathsf{C^T} \mathsf{W}^{-1}\mathsf{C}\> \mathsf x = \mathsf 0.
    \end{equation}
    Multiplying both sides of~\eqref{eqn:11} by $\lambda \mathsf x^\ast$, we get
    $$\lambda \mathsf x^\ast \mathsf{A_{\gamma\delta}} \mathsf x - \mathsf x^\ast (\gamma  \mathsf{B^T} \mathsf{Q}^{-1}\mathsf{B}  + \delta  \mathsf{C^T} \mathsf{W}^{-1}\mathsf{C}) \mathsf x = 0,  $$
    which is equivalent to
    \begin{equation} \label{eqn:lambda}
        \lambda =  \frac{\mathsf x^\ast (\gamma  \mathsf{B^T} \mathsf{Q}^{-1}\mathsf{B}  + \delta  \mathsf{C^T} \mathsf{W}^{-1}\mathsf{C}) \mathsf x}{\mathsf x^\ast(\mathsf{A}+\gamma \mathsf{B^T} \mathsf{Q^{-1}} \mathsf{B} +\delta \mathsf{C^T} \mathsf{W^{-1}} \mathsf{C})\mathsf x}.
    \end{equation}

    \vspace{0.2cm}
    \noindent
    Taking into account that $\gamma  \mathsf{B^T} \mathsf{Q}^{-1}\mathsf{B}  + \delta  \mathsf{C^T} \mathsf{W}^{-1}\mathsf{C}$ is the sum of two symmetric positive semidefinite matrices and $\mathsf{A}$ is SPD, we conclude that all the eigenvalues of $\mathcal{P}^{-1}_{ \gamma \delta} \mathcal{A}_{\gamma \delta}$ are real\footnote{Note that since $\lambda$ is real, the corresponding eigenvector can also be chosen to be real and therefore $\mathsf x^\ast$ can be replaced by $\mathsf{x^T}$.}. \\
    From~\eqref{eqn:lambda} we also deduce that $\lambda < 1$, and that all eigenvalues satisfying~\eqref{eqn:lambda} tend to $1$ for $\gamma$ and/or $\delta \rightarrow \infty$. Summarizing, we have the following cases:

    \begin{itemize}
        \item $\mathsf x \in  \ker(\mathsf{C}) \setminus  \ker(\mathsf{B})$,
              $$
                  0 <\eta \le \lambda = \frac{\gamma\mathsf x^\ast   \mathsf{B^T} \mathsf{Q}^{-1}\mathsf{B} \mathsf x }{\mathsf x^\ast (\mathsf{A}  + \gamma \mathsf{B^T} \mathsf{Q^{-1}} \mathsf{B} )\mathsf x} < 1,
              $$
              where
              $$\eta = \min \left \{\frac{\gamma \mathsf x^\ast  \mathsf{B^T} \mathsf{Q}^{-1}\mathsf{B} \mathsf x }{\mathsf x^\ast (\mathsf{A}  + \gamma \mathsf{B^T} \mathsf{Q^{-1}} \mathsf{B} )\mathsf x } \;\biggm|\; \mathsf x \in  \ker(\mathsf{C}) \setminus  \ker(\mathsf{B})\right \}.$$

              \vspace{0.3cm}
        \item $\mathsf x \in  \ker(\mathsf{B}) \setminus  \ker(\mathsf{C})$,
              $$
                  0<\epsilon \le \lambda = \frac{\delta\mathsf x^\ast   \mathsf{C^T} \mathsf{W}^{-1}\mathsf{C} \mathsf x }{\mathsf x^\ast (\mathsf{A}+\delta \mathsf{C^T} \mathsf{Q^{-1}} \mathsf{C} )\mathsf x} < 1,
              $$
              where
              $$\epsilon = \min \left \{\frac{\delta \mathsf x^\ast  \mathsf{C^T} \mathsf{W}^{-1}\mathsf{C} \mathsf x }{\mathsf x^\ast (\mathsf{A}  + \delta \mathsf{C^T} \mathsf{W^{-1}} \mathsf{C} )\mathsf x } \;\biggm|\; \mathsf x \in  \ker(\mathsf{B}) \setminus  \ker(\mathsf{C})\right \}.$$

              \vspace{0.3cm}
        \item $\mathsf x \not\in  \ker (\mathsf{B}) \cup  \ker (\mathsf{C})$,
              $$
                  0<\theta \le \lambda =  \frac{\mathsf x^\ast (\gamma  \mathsf{B^T} \mathsf{Q}^{-1}\mathsf{B}  + \delta  \mathsf{C^T} \mathsf{W}^{-1}\mathsf{C}) \mathsf x}{\mathsf x^\ast(\mathsf{A}+\gamma \mathsf{B^T} \mathsf{Q^{-1}} \mathsf{B} +\delta \mathsf{C^T} \mathsf{W^{-1}} \mathsf{C})\mathsf x}<1,
              $$
              where
              $$\theta = \min \left \{\frac{\mathsf x^\ast (\gamma  \mathsf{B^T} \mathsf{Q}^{-1}\mathsf{B}  + \delta  \mathsf{C^T} \mathsf{W}^{-1}\mathsf{C}) \mathsf x}{\mathsf x^\ast(\mathsf{A}+\gamma \mathsf{B^T} \mathsf{Q^{-1}} \mathsf{B} +\delta \mathsf{C^T} \mathsf{W^{-1}} \mathsf{C})\mathsf x} \;\biggm|\; \mathsf x \not\in  \ker (\mathsf{B}) \cup  \ker (\mathsf{C}) \right \}.$$
    \end{itemize}

    It is easy to check that if $\lambda \ne 1$, then $\mathsf x \in  \ker(\mathsf{B}) \cap  \ker(\mathsf{C})$ implies $\mathsf{A_{\gamma\delta}} \mathsf x = 0$. Since $\mathsf{A_{\gamma\delta}}$ is SPD, this means
    that $\mathsf x$ should be the zero vector, which is impossible.
\end{proof}

\subsection{Spectrum of preconditioned matrix}\label{subsec:spectrum}
We perform numerical tests to evaluate the theoretical findings in Theorem~\ref{thm:spectral_AL} and analyze the impact of the proposed preconditioner on
the spectrum of the system in~\eqref{eqn:algebraic_form_stokes}. We start with the following configuration for the Stokes problem:
\begin{itemize}
    \item $\Omega = [0,1]^2$,
    \item $\omega = \mathcal{B}_{r}(\boldsymbol{c})$, where $\boldsymbol{c} = (0.45,0.45)$ and $r = 0.21$.
\end{itemize}We consider a discretization {with $\mathcal{Q}_2$-$\mathcal{Q}_1$ elements for the velocity and pressure unknowns, while $\mathcal{Q}_1$ elements are used for the Lagrange multiplier. The background mesh consists of three uniform refinements of the
        unit square, while the immersed mesh consists of a uniform grid with $33$ facets.} For this discretization, we have $\mathsf{A} \in \mathbb{R}^{578 \times 578}$, the (negative) divergence matrix is $\mathsf{B} \in \mathbb{R}^{81 \times 578}$, and the coupling matrix is $\mathsf{C} \in \mathbb{R}^{34\times 578}${, resulting in a global matrix $\mathcal{A}_{\gamma\delta} \in \mathbb{R}^{677\times 677}$}. The choices for $\mathsf{Q}$ and $\mathsf{W}$ are
\begin{itemize}
    \item $\mathsf{Q} \coloneqq \mathsf{M_p}$ (pressure mass matrix),
    \item $\mathsf{W} \coloneqq \mathsf{M_{\lambda}^2}$ (immersed mass matrix {squared}).
\end{itemize}
We report in Figure~\ref{fig:stokes_spectrum} the spectrum of the unpreconditioned (top row) and preconditioned (bottom row) system matrix
for increasing (but identical) values of the AL parameters $\delta$ and $\gamma$ for the \emph{ideal} preconditioner. We observe that, except for
the zero eigenvalue, the rest of the eigenvalues are real and lie in the interval $(0, 1]$. Moreover, the {\color{black}non-zero eigenvalues} of the preconditioned system are (incrementally) shifted
towards $1$ as the parameters $\delta$ and $\gamma$ increase.

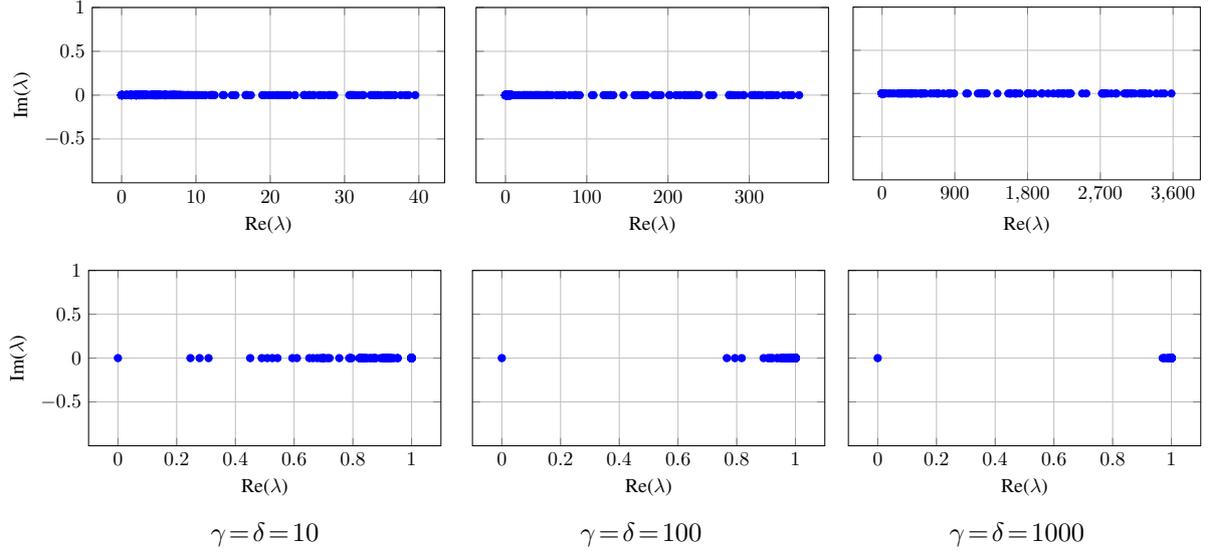
\begin{figure}[h]
    \centering
    \begin{subfigure}{0.3\textwidth} %
        \centering
        \begin{tikzpicture}[scale=0.68] %
            \begin{axis}[
                    xlabel={Re($\lambda$)}, ylabel={Im($\lambda$)},
                    width = 8.4cm,
                    height = 5cm,
                    grid=both,
                ]
                \addplot[only marks, mark=*, mark size=2pt, color=blue] table {data/stokes/gammadelta10.dat};
            \end{axis}
        \end{tikzpicture}
    \end{subfigure}\hspace{1.1cm} %
    \begin{subfigure}{0.3\textwidth}
        \centering
        \begin{tikzpicture}[scale=0.68]
            \begin{axis}[
                    xlabel={Re($\lambda$)}, yticklabels={,,},
                    grid=both,
                    width = 8.4cm,
                    height = 5cm,
                ]
                \addplot[only marks, mark=*, mark size=2pt, color=blue] table {data/stokes/gammadelta100.dat};
            \end{axis}
        \end{tikzpicture}
    \end{subfigure}\hspace{0.05cm} %
    \begin{subfigure}{0.3\textwidth}
        \centering
        \begin{tikzpicture}[scale=0.67]
            \begin{axis}[
                    xlabel={Re($\lambda$)}, yticklabels={,,},
                    width = 8.4cm,
                    height = 5cm,
                    grid=both,
                    xtick distance=900,  %
                ]
                \addplot[only marks, mark=*, mark size=2pt, color=blue] table {data/stokes/gammadelta1000.dat};
            \end{axis}
        \end{tikzpicture}
    \end{subfigure}

    \vspace{0.2cm} %

    \begin{subfigure}{0.3\textwidth} %
        \centering
        \begin{tikzpicture}[scale=0.68] %
            \begin{axis}[
                    xlabel={Re($\lambda$)}, ylabel={Im($\lambda$)},
                    xmin=-0.1, xmax=1.1,  %
                    width = 8.4cm,
                    height = 5cm,
                    grid=both,
                ]
                \addplot[only marks, mark=*, mark size=2pt, color=blue] table {data/stokes/gammadelta10_prec.dat};
            \end{axis}
        \end{tikzpicture}
        \caption*{\hspace{2.0cm} $\gamma\! =\! \delta \! =\! 10$}
    \end{subfigure}\hspace{1.1cm} %
    \begin{subfigure}{0.3\textwidth}
        \centering
        \begin{tikzpicture}[scale=0.68]
            \begin{axis}[
                    xlabel={Re($\lambda$)}, yticklabels={,,},
                    xmin=-0.1, xmax=1.1,  %
                    width = 8.4cm,
                    height = 5cm,
                    grid=both,
                ]
                \addplot[only marks, mark=*, mark size=2pt, color=blue] table {data/stokes/gammadelta100_prec.dat};
            \end{axis}
        \end{tikzpicture}
        \caption*{$\gamma\! =\! \delta \! =\! 100$}
    \end{subfigure}\hspace{0.05cm} %
    \begin{subfigure}{0.3\textwidth}
        \centering
        \begin{tikzpicture}[scale=0.68]
            \begin{axis}[
                    xlabel={Re($\lambda$)}, yticklabels={,,},
                    xmin=-0.1, xmax=1.1,  %
                    width = 8.4cm,
                    height = 5cm,
                    grid=both,
                ]
                \addplot[only marks, mark=*, mark size=2pt, color=blue] table {data/stokes/gammadelta1000_prec.dat};
            \end{axis}
        \end{tikzpicture}
        \caption*{$\gamma\! =\! \delta \! =\! 1000$}
    \end{subfigure}
    \vspace{0.1cm}
    \caption{Spectrum of the \emph{original} system matrix $\mathcal{A}_{\gamma\delta}$ (top row) and $\mathcal{P}^{-1}_{ \gamma \delta} \mathcal{A_{\gamma\delta}}$ (bottom row) for increasing values of $\gamma$ and $\delta$ applied to the Stokes test case.}
    \label{fig:stokes_spectrum}
\end{figure}

\subsection{Mesh-independence for lower bound}\label{sec:boundedness}
Concerning the lower bound {in Theorem~\ref{thm:spectral_AL}}, for suitable choices of the matrices $\mathsf{Q}$ and $\mathsf{W}$ it is possible to
prove that $\lambda$ is bounded away from zero uniformly in $h_\Omega$ and $h_\Gamma$, for all
fixed $\gamma$ and $\delta$. {We will first prove the result for $d=2$, i.e. when $\Gamma$ is a curve. The extension to the case $d=3$
        follows similarly and is given as a Remark at the end of this Section. Before delving into the proof, we collect some useful results we will need in the sequel. We start
        with the following inverse estimate, whose proof can be found in Appendix~\ref{sec:app:inverse_estimate}.
        \begin{lemma}{\label{thm:inverse_estimate}}
            Let $\Lambda_h$ be the space for the discrete Lagrange multiplier, defined in~\eqref{eqn:Lambda_h_S}. Then, there exists a constant $C>0$ independent of the mesh size $h_\Gamma$ such that the following estimate holds:
            \begin{equation}\label{eqn:inverse_estimate}
                ||\mu_h||_{-\frac{1}{2},\Gamma} \geq C h_{\Gamma}^{\frac{1}{2}} ||\mu_h||_{0,\Gamma} \qquad \forall \mu_h \in \Lambda_h.
            \end{equation}
        \end{lemma}Using Lemma~\eqref{thm:inverse_estimate} above, we get the following alternative characterization of the discrete inf-sup condition~\eqref{eqn:inf_sup_lambda_h_V_h}.

        \begin{proposition}\label{prop:infsup2}
            Let $V_h$ and $\Lambda_h$ be defined as in Section~\ref{sec:discr_stokes_problem}. If $h_{\Omega}/h_{\Gamma}$ is sufficiently small and the mesh $\Gamma_h$ is
            quasi-uniform, then there exists a positive constant $\tilde\beta_2$ independent of the mesh sizes $h_\Omega$ and $h_{\Gamma}$ such that the following rescaled inf-sup condition holds
            \begin{equation}\label{inf-sup2}
                \inf_{\mu_h \in \Lambda_h} \sup_{v_h \in V_h} \frac{\langle\mu_h, v_h \rangle_{\Gamma}}{||\mu_h||_{0,\Gamma} \> ||v_h||_{1,\Omega}}\ge h_{\Gamma}^{-\frac{1}{2}} \tilde\beta_2.
            \end{equation}
        \end{proposition}Alternative equivalence results
        between scaled norm and $H^{\frac{1}{2}}$ norms have been shown through localization techniques in~\cite{Faermann1} and recently extended to the $H^{-\frac{1}{2}}$ case by Bertoluzza in~\cite{bertoluzza2023localization}. \\
        In addition, we report the following well-known result concerning the general theory of the generalized Rayleigh
        quotient, which will be useful in what follows.

        \begin{remark}\label{thm:gen_theory}
            Let $\mathsf{M}$ and $\mathsf{N}$ be symmetric and symmetric positive definite matrices, respectively, with generalized eigenvalues $\lambda_1 \le \cdots \le \lambda_n$ and eigenvectors $\mathsf{v}_1, \ldots, \mathsf{v}_n \in \mathbb{R}^n$, such that
            $\mathsf{M} \mathsf{v}_i = \lambda_i \mathsf{N} \mathsf{v}_i.$
            Let $\mathsf{x} \in \mathbb{R}^n$, denote an arbitrary vector. Then:
            \begin{itemize}
                \item The smallest eigenvalue $\lambda_1$ can be characterized as
                      \[
                          \lambda_1 = \min_{\mathsf{x} \neq 0} \frac{\mathsf{x^T} \mathsf{M} \mathsf{x}}{\mathsf{x^T} \mathsf{N} \mathsf{x}},
                          \quad \text{achieved when } \mathsf{x} = \pm \mathsf{v}_1.
                      \]
                \item The second smallest eigenvalue $\lambda_{2}$ satisfies
                      \[
                          \lambda_{2} = \min_{\substack{\mathsf{x} \neq 0 \\ \mathsf{x^T} \mathsf{N} \mathsf{v}_1 = 0}}
                          \frac{\mathsf{x^T} \mathsf{M} \mathsf{x}}{\mathsf{x^T} \mathsf{N} \mathsf{x}},
                          \quad \text{achieved when } \mathsf{x} = \pm \mathsf{v}_{2},
                      \]
            \end{itemize}
            and so on.
        \end{remark}

        \noindent Finally, norms of finite element functions can be computed using the following definitions:
        \begin{equation}\label{eq:norm1}
            |v_h|_{1,\Omega} \coloneqq  (\mathsf{v^T} \mathsf{A}  \mathsf{v})  ^{1/2},
        \end{equation}
        \begin{equation}\label{eq:norm2}
            ||\mu_h||_{0,\Gamma} \coloneqq ( \mathsf{\upmu^T} \mathsf{M_\lambda}  \mathsf{\upmu} ) ^{1/2},
        \end{equation}
        \begin{equation}\label{eq:norm3}
            ||q_h||_{0,\Omega} \coloneqq ( \mathsf{q^T} \mathsf{M_p}  \mathsf{q}  ) ^{1/2},
        \end{equation}
        where $\mathsf v$, $\mathsf{q}$ and $\mathsf{\upmu}$ are vectors of the coefficients associated with
        the velocity, pressure and Lagrange multiplier basis functions, and $\mathsf{A}, \mathsf{M_p}, \mathsf{M_{\lambda}}$ are
        the usual stiffness and mass matrices.
        To prove mesh-independence for the lower bound of the preconditioned system, we will link the discrete inf-sup conditions to suitable generalized eigenvalue problems. The following three lemmas provide an algebraic
        interpretation of the discrete inf-sup stability conditions presented in Section~\ref{sec:stokes_problem}.

        \begin{lemma}\label{lemma:algebraic_infsup_B}
            Let $\mathsf{A}, \mathsf{B}$, and $\mathsf{M_p}$ be the matrices defined in Section~\ref{sec:stokes_problem}, and assume that the discrete inf-sup condition in Proposition~\ref{prop:discr_inf-sup-stokes} is satisfied. Then, the inf-sup constant $\beta_1 > 0$ is such that
            \begin{equation}\label{eq:13}
                \beta_1^2 = \min_{\{\mathsf{v} \in \mathbb{R}^n | \mathsf{u^T}\mathsf{Av} =0 \>\> \text{for}\> \mathsf{u} \in \ker(\mathsf{B})\}}  \frac{\mathsf{v^T} \mathsf{B^T} \mathsf{M_p^{-1}} \mathsf{B} \mathsf v}{\mathsf{v^T} \mathsf{A} \mathsf v}.
            \end{equation}
        \end{lemma}
        \begin{proof}
            A proof of this result can be found in~\cite{ElmanSilvesterWathen} (p. 193), or in~\cite{malkus}, where the classical Stokes problem is analyzed in detail. Notice that the matrices $\mathsf{A}$, $\mathsf{B}$ and $\mathsf{M_p}$ are exactly those of the classical Stokes problem defined in $\Omega$, without an immersed boundary.
        \end{proof}

        \begin{lemma}\label{lemma:algebraic_infsup_C}
            Let $\mathsf{A}, \mathsf{C}$, and $\mathsf{M_\lambda}$ be the matrices defined in Section~\ref{sec:stokes_problem}, and assume that the discrete inf-sup condition~\eqref{eqn:inf_sup_lambda_h_V_h} is satisfied. Then, there exists a positive constant $\bar \beta_2 $ independent of $h_\Gamma$, such that
            \begin{equation}
                \bar\beta_2^2 \le \min_{\{\mathsf{v} \in \mathbb{R}^n | \mathsf{u^T}\mathsf{Av} =0 \>\> \text{for}\> \mathsf{u} \in \ker(\mathsf{C})\}} \frac{\mathsf{v^T} \mathsf{C^T} \mathsf{M_\lambda^{-2}}\mathsf{C}  \mathsf v}{\mathsf{v^T}\mathsf{A} \mathsf v}.
            \end{equation}
        \end{lemma}
        \begin{proof}
            Using the matrix norms~\eqref{eq:norm1},~\eqref{eq:norm2} and given that $\langle \mu_h, v_h \rangle_{\Gamma} =  \mathsf{v^T}\mathsf{C^T}  \mathsf{\upmu} $, it is possible to give an algebraic interpretation of the discrete inf-sup stability condition in Proposition~\ref{prop:infsup2}. In particular we have:
            \begin{equation}\label{eq:inf-sup3}
                \inf_{\mathsf{\upmu} } \sup_{\mathsf{v}} \frac{\mathsf{v^T C^T} \mathsf{\upmu}}{(\mathsf{v^T A v})^{\frac{1}{2}}(\mathsf{\upmu^T} h_\Gamma \mathsf{M_\lambda} \mathsf{\upmu})^{\frac{1}{2}}}\ge  \tilde\beta_2.
            \end{equation}
            Arguing as in~\cite{malkus}, let $0<\sigma_1 \leq \sigma_2 \leq \ldots \leq \sigma_l$ be the $l$ largest generalized eigenvalues of
            \begin{equation}\label{eq:eigenLBB}
                \mathsf{C^T} h_\Gamma^{-1} \mathsf{M_\lambda^{-1}}\mathsf{C}\mathsf v = \sigma \mathsf{A} \mathsf v.
            \end{equation}
            Therefore, the constant appearing in the inf-sup condition~\eqref{inf-sup2} is given by the square root of $\sigma_1$, which is hence independent of the mesh size (this result can be proved following the same approach used in~\cite{malkus}, where the classical Stokes problem is considered). \\
            In other words, considering Remark~\ref{thm:gen_theory} and given the fact that $\mathsf{C^T} h_\Gamma^{-1} \mathsf{M_\lambda^{-1}}\mathsf{C}$ is a symmetric and positive semidefinite matrix, the following characterization holds:
            \begin{equation} \label{eq:min}
                \tilde\beta_2^2 = \sigma_1 = \min_{\{\mathsf{v} \in \mathbb{R}^n | \mathsf{u^T}\mathsf{Av}  =0 \>\> \text{for}\> \mathsf{u} \in \ker(\mathsf{C})\}} \frac{\mathsf{v^T} \mathsf{C^T} h_\Gamma^{-1} \mathsf{M_\lambda^{-1}}\mathsf{C}  \mathsf v}{\mathsf{v^T}\mathsf{A} \mathsf v}.
            \end{equation}
            Furthermore, for a one-dimensional immersed domain embedded in a two-dimensional background domain, it can be shown that $(h_\Gamma \mathsf{M_\lambda})^{-1}$ and $\mathsf{M_\lambda^{-2}}$ are spectrally equivalent, i.e.
            \begin{equation}\label{eq:bounds}
                0 < \frac{c}{C^2} \le \frac{\mathsf{\upmu^T} \mathsf{M_\lambda^{-2}} \mathsf{\upmu}}{\mathsf{\upmu^T} (h_\Gamma \mathsf{M_\lambda})^{-1} \mathsf{\upmu}} \le \frac{C}{c^2},
            \end{equation}for some positive constants $c, C$ independent of the discretization parameters (the details can be found in Appendix~\ref{sec:app:spectral_h_mass}). Setting $\mathsf{\upmu} = \mathsf{C} \mathsf v$, and multiplying each term of (\ref{eq:bounds}) by $\frac{\mathsf{\upmu^T} (h_\Gamma \mathsf{M_\lambda})^{-1} \mathsf{\upmu}}{\mathsf{v^T} \mathsf{A} \mathsf v}$, we get
            $$0 < \frac{c}{C^2} \> \frac{\mathsf{v^T} \mathsf{C^T}(h_\Gamma \mathsf{M_\lambda})^{-1} \mathsf{C}\mathsf v}{\mathsf{v^T} \mathsf{A} \mathsf v}\le \frac{\mathsf{v^T} \mathsf{C^T} \mathsf{M_\lambda^{-2}} \mathsf{C}\mathsf v,}{\mathsf{v^T} \mathsf{A} \mathsf v} \le \frac{C}{c^2}\> \frac{\mathsf{v^T}\mathsf{C^T}(h_\Gamma \mathsf{M_\lambda})^{-1} \mathsf{C} \mathsf v}{\mathsf{v^T} \mathsf{A} \mathsf v}.$$Combining
            this result with ~\eqref{eq:min}, it follows that
            \begin{equation}\label{eq:eiginfsup1}
                \bar\beta_2^2 \le \min_{\{\mathsf{v} \in \mathbb{R}^n | \mathsf{u^T}\mathsf{Av}  =0 \text{ for }  \mathsf{u} \in \ker(\mathsf{C})\}}\frac{\mathsf{v^T}  \mathsf{C^T} \mathsf{M_\lambda^{-2}}\mathsf{C}\mathsf v}{\mathsf{v^T}\mathsf{A} \mathsf v}
            \end{equation}
            (where $\bar\beta_2^2 \coloneqq \frac{{c}}{C^2}\sigma_1$), uniformly in $h_\Gamma$.
        \end{proof}
        \vspace{0.3cm}
        \noindent

        \vspace{0.3cm}
        \noindent
        \begin{lemma}\label{lemma:algebraic_infsup_BC}
            Let $\mathsf{A}, \mathsf{B}, \mathsf{C}, \mathsf{M_p}$ and $\mathsf{M_\lambda}$ be the matrices defined in Section~\ref{sec:stokes_problem}, and assume that the two inf-sup conditions~\eqref{eqn:inf_sup_div_discrete} and~\eqref{eqn:inf_sup_lambda_h_V_h}
            are satisfied. Then, there exists a positive constant $\bar\beta_3>0$ independent of both $h_\Omega$ and $h_\Gamma$, such that
            \begin{equation}\label{eq:13}
                \bar\beta_3^2 \le \min_{\{\mathsf{v} \in \mathbb{R}^n | \mathsf{u^T}\mathsf{Av}  =0 \text{ for }  \mathsf{u} \in \ker(\mathsf{C}) \cap \ker(\mathsf{B})\}} \frac{\mathsf{v^T} (\mathsf{B^T} \mathsf{M_p^{-1}} \mathsf{B} + \mathsf{C^T} \mathsf{M_\lambda^{-2}} \mathsf{C}) \mathsf v}{\mathsf{v^T} \mathsf{A} \mathsf v}.
            \end{equation}

        \end{lemma}
        \begin{proof}
            The discrete counterpart of the original double saddle point problem~\eqref{eqn:LM_stokes_2} can be equivalently rewritten as
            \begin{equation}\label{eqn:LM_stokes_3}
                \begin{aligned}
                    (\nabla \mathbf{v}_h,  \nabla\mathbf{u}_h)_{\Omega} - (\nabla \cdot \mathbf{v}_h,p_h)_{\Omega} + \langle \boldsymbol{\lambda}_h, \mathbf{v}_h\rangle_{\Gamma}
                     & = (\mathbf{f},\mathbf{v}_h)_{\Omega}
                     &                                                         & \quad \forall \mathbf{v}_h \in V_h,              \\[6pt]
                    -(\nabla \cdot \mathbf{u}_h,q_h)_{\Omega}+\langle  \boldsymbol{\mu}_h, \mathbf{u}_h\rangle_{\Gamma}
                     & = \langle \boldsymbol{\mu}_h,\mathbf{g}\rangle_{\Gamma}
                     &                                                         & \quad \forall (q_h, \boldsymbol{\mu}_h) \in M_h.
                \end{aligned}
            \end{equation}
            Problem~\eqref{eqn:LM_stokes_3} is associated with the following partition of the original saddle point matrix

            \begin{equation}
                \left[
                    \begin{array}{c:cc}
                        \mathsf{A} & \mathsf{B^T} & \mathsf{C^T} \\ \hdashline
                        \mathsf{B} & 0            & 0            \\
                        \mathsf{C} & 0            & 0
                    \end{array}
                    \right] =
                \begin{bmatrix}
                    \mathsf{A} & \mathsf{D^T} \\
                    \mathsf{D} & 0            \\
                \end{bmatrix},
            \end{equation}
            where $\mathsf{D} \coloneqq \begin{bmatrix}
                    \mathsf{B} \\
                    \mathsf{C}
                \end{bmatrix}$. Defining the bilinear form $d: V \times M \rightarrow \mathbb{R}$ as $d(\mathbf{v},(q,\boldsymbol{\mu})) = (\nabla \cdot \mathbf{v},q)_{\Omega} + \langle\boldsymbol{\mu},\mathbf{v}\rangle_{\Gamma}$,
            we have that at the discrete level $d(\cdot,\cdot)$ can be evaluated as
            $$d(\mathbf{v}_h,(q_h,\boldsymbol{\mu}_h)) = \mathsf{v^T} \mathsf{B^T} \mathsf{q}  + \mathsf{v^T} \mathsf{C^T}\mathsf{\upmu} .$$
            Using now the matrix norms~\eqref{eq:norm1},~\eqref{eq:norm2},~\eqref{eq:norm3}, and the definition of the norm $$||(\boldsymbol{\mu}_h, q_h)||^2_{M}= ||q_h||_Q^2 + ||\boldsymbol{\mu}_h||_\Lambda^2,$$ it is possible to give an algebraic interpretation of the discrete inf-sup stability condition in Proposition~\ref{prop:combined_inf_sup_stokes_discrete}, using the rescaled variant provided by Lemma~\eqref{thm:inverse_estimate}. In particular, we have:
            \begin{equation}\label{eq:inf-sup3}
                \inf_{(\mathsf{\upmu}, \mathsf{q}) } \sup_{\mathsf{v}} \frac{\mathsf{v^T B^T} \mathsf{q} + \mathsf{v^T C^T} \mathsf{\upmu}}{(\mathsf{v^T A v})^{\frac{1}{2}}(\mathsf{q^T M_p q} + \mathsf{\upmu^T} h_\Gamma \mathsf{M_\lambda} \mathsf{\upmu})^{\frac{1}{2}}}\ge  \tilde\beta_3.
            \end{equation}
            Arguing again as in~\cite{malkus}, let $0<\sigma_1 \leq \sigma_2 \leq \ldots$ be the nonzero generalized eigenvalues of
            \begin{equation}\label{eq:eigenLBB}
                (\mathsf{B^T M_p^{-1}B} + \mathsf{C^T} h_\Gamma^{-1} \mathsf{M_\lambda^{-1}}\mathsf{C})\mathsf v = \sigma \mathsf{A} \mathsf v.
            \end{equation}
            Then, the constant appearing in the inf-sup condition~\eqref{eq:inf-sup3} is given by the square root of $\sigma_1$, which is therefore independent of the mesh size. Then, considering the properties of the generalized Rayleigh quotient, the following characterization holds:
            \begin{equation}\label{eqn:h_combined_algebraic}
                \tilde\beta_3^2 = \sigma_1 = \min_{\{\mathsf{v} \in \mathbb{R}^n | \mathsf{u^T}\mathsf{Av}  =0 \>\> \text{for}\>  \mathsf{u} \in \ker(\mathsf{B}) \cap \ker(\mathsf{C})\}} \frac{\mathsf{v^T B^T M_p^{-1}B v} + \mathsf{v^T} \mathsf{C^T} h_\Gamma^{-1} \mathsf{M_\lambda^{-1}}\mathsf{C}  \mathsf v}{\mathsf{v^T}\mathsf{A} \mathsf v}.
            \end{equation}
            Furthermore, using the spectral equivalence between $(h_\Gamma \mathsf{M_\lambda})^{-1}$ and $\mathsf{M_\lambda^{-2}}$ as in the previous case, we can conclude that
            \begin{equation}\label{eqn:combined_algebraic}
                \bar\beta_3^2 \le \min_{\{\mathsf{v} \in \mathbb{R}^n | \mathsf{u^T}\mathsf{Av}  =0 \>\> \text{for}\>  \mathsf{u} \in \ker(\mathsf{B}) \cap \ker(\mathsf{C})\}} \frac{\mathsf{v^T B^T M_p^{-1}B v} + \mathsf{v^T} \mathsf{C^T} \mathsf{M_\lambda^{-2}}\mathsf{C}  \mathsf v}{\mathsf{v^T}\mathsf{A} \mathsf v}.
            \end{equation}
        \end{proof}
    }

    {
        \begin{theorem}\label{thm:spectral_AL_independence_of_h}
            Let $V_h$, $Q_h$ and $\Lambda_h$ be defined as in Section~\ref{sec:discr_stokes_problem}. If $\mathsf{Q} \coloneqq \mathsf{M_p}$, and $\mathsf{W} \coloneqq \mathsf{M_\lambda^2}$, then the lower bound
            in $$\operatorname{Spec}(\mathcal{P}_{\gamma \delta}^{-1} \mathcal{A}_{\gamma \delta}) \subseteq \left [ \min (\eta, \epsilon, \theta), 1 \right ]$$
            from Theorem~\ref{thm:spectral_AL} is bounded away from zero by a positive constant independent of the discretization parameters $h_\Omega$ and $h_\Gamma$.
        \end{theorem}}
\begin{proof}
    {
        From the proof of Theorem~\ref{thm:spectral_AL} (more precisely from Equation~\eqref{eqn:11}), we know that all eigenvalues of $\mathcal{P}^{-1}_{ \gamma \delta} \mathcal{A}_{\gamma \delta}$, except for those that are equal to $0$ and $1$, coincide with the eigenvalues of the following generalized eigenvalue problem:
        \begin{equation}\label{eq:gen_EP}
            (\gamma  \mathsf{B^T} \mathsf{Q}^{-1}\mathsf{B}  + \delta  \mathsf{C^T} \mathsf{W}^{-1}\mathsf{C}) \mathsf x = \lambda (\mathsf{A}+\gamma \mathsf{B^T} \mathsf{Q^{-1}} \mathsf{B} +\delta \mathsf{C^T} \mathsf{W^{-1}} \mathsf{C})\mathsf x,
        \end{equation}
        and they are all real. Therefore, the corresponding eigenvector can also be chosen to be real and, from now on, $\mathsf x^\ast$ is replaced by $\mathsf{x^T}$.
        Our goal is to show that $\lambda_{\min}^+$, the smallest positive eigenvalue of~\eqref{eq:gen_EP}, is bounded away from zero. To this end, we analyze separately the three cases considered in Theorem~\ref{thm:spectral_AL}. }
    \begin{itemize}
        {
        \item $\mathsf x                 \in \ker (\mathsf{C}) \setminus   \ker (\mathsf{B})$. We want to estimate $\eta$.
              From Theorem~\ref{thm:spectral_AL}, we know that
              \begin{equation}\label{eq:12}
                  \eta \le \lambda = \frac{\gamma \mathsf{x^T}  \mathsf{B^T} \mathsf{M_p^{-1}}\mathsf{B} \mathsf x }{\mathsf{x^T}(\mathsf{A} +\gamma \mathsf{B^T} \mathsf{M_p^{-1}} \mathsf{B} )\mathsf x}.
              \end{equation}
              Therefore, $\eta$ coincides with the smallest positive eigenvalue of the generalized eigenvalue problem
              $$\gamma \mathsf{B^T} \mathsf{M_p^{-1}}\mathsf{B} \mathsf{x} = \lambda (\mathsf{A} +\gamma \mathsf{B^T} \mathsf{M_p^{-1}} \mathsf{B} ) \mathsf{x}.$$
              In other words,
              $$\eta = \min_{\mathsf{v} \in \mathcal{R}}\frac{\gamma \mathsf{v^T} ( \mathsf{B^T} \mathsf{M_p^{-1}}\mathsf{B}) \mathsf v }{\mathsf{v^T}(\mathsf{A} +\gamma \mathsf{B^T} \mathsf{M_p^{-1}} \mathsf{B} )\mathsf v},$$
              where $\mathcal{R}$ is defined as
              $$\mathcal{R}\coloneqq \{\mathsf{v} \in \mathbb{R}^n | \mathsf{u^T}(\mathsf{A} +\gamma \mathsf{B^T} \mathsf{M_p^{-1}} \mathsf{B} )\mathsf v=0 \quad \forall \mathsf{u} \in \ker(\mathsf{B})\}.$$
              Since $\mathsf{u} \in \ker(\mathsf{B})$, it follows that the condition in $\mathcal{R}$ reduces to
              $$\mathsf{u^T}\mathsf{A} \mathsf{v} = 0 \quad \forall \mathsf{u} \in \ker(\mathsf{B}),$$
              and thus we can equivalently define
              $$\mathcal{R}\coloneqq \{\mathsf{v} \in \mathbb{R}^n | \mathsf{u^T}\mathsf{A} \mathsf v=0 \quad \forall \mathsf{u} \in \ker(\mathsf{B})\}.$$
              Next, we multiply and then divide by $\mathsf{v^T} \mathsf{A} \mathsf v$ to obtain
              $$\eta =\min_{\mathsf{v}\in \mathcal{R}} \frac{\gamma \mathsf{v^T}   \mathsf{B^T} \mathsf{M_p^{-1}}\mathsf{B} \mathsf v }{\mathsf{v^T} \mathsf{A} \mathsf v} \>\frac{\mathsf{v^T} \mathsf{A} \mathsf v}{ \mathsf{v^T} \mathsf{A} \mathsf v +\gamma \mathsf{v^T} \mathsf{B^T} \mathsf{M_p^{-1}} \mathsf{B} \mathsf v}. $$
              Let $r(\mathsf v) \coloneqq \frac{\mathsf{v^T} \mathsf{B^T} \mathsf{M_p^{-1}} \mathsf{B} \mathsf v}{\mathsf{v^T} \mathsf{A} \mathsf v} $ be the generalized Rayleigh quotient associated with $\mathsf{B^T} \mathsf{M_p^{-1}}\mathsf{B}$ and $\mathsf{A}$. Then we observe that $\eta$ can be written as
              $$\eta = \min_{\mathsf{v}\in \mathcal{R}} \frac{\gamma r(\mathsf v)}{1+\gamma r(\mathsf v)} = \min_{\mathsf{v}\in \mathcal{R}} f\left(r(\mathsf v)\right).$$
              Since $f(r(\mathsf{v}))=\frac{\gamma r(\mathsf v)}{1+\gamma r(\mathsf v)}$ is a monotone increasing function for $r(\mathsf{v}) > 0$,
              $$\min_{\mathsf{v}\in \mathcal{R}} f\left(r(\mathsf v)\right) = f(\min_{\mathsf{v}\in \mathcal{R}} r(\mathsf v)).$$

              From Lemma~\eqref{lemma:algebraic_infsup_B}  we have $\min_{\mathsf{v}\in \mathcal{R}} r(\mathsf v) = \beta_1^2$, therefore we can conclude that
              \begin{equation}\label{eq:firstbound}
                  \eta = f(\beta_1^2) =\frac{\gamma \beta_1^2}{1+\gamma \beta_1^2}>0,
              \end{equation}
              uniformly in $h_\Omega$, which completes the estimate for $\eta$.}
              {
        \item $\mathsf x \in  \ker (\mathsf{B})\setminus  \ker (\mathsf{C})$. From Theorem~\ref{thm:spectral_AL}, we know that
              \begin{equation}\label{eq:eigenvaluesC}
                  \epsilon \le \lambda = \frac{\delta \mathsf{x^T}   \mathsf{C^T} \mathsf{M_\lambda^{-2}}\mathsf{C} \mathsf x }{\mathsf{x^T} (\mathsf{A} + \delta \mathsf{C^T} \mathsf{M_\lambda^{-2}} \mathsf{C} )\mathsf x}.
              \end{equation}
              Arguing as in the previous case, $\epsilon$ can be written as
              $$\epsilon = \min_{\mathsf{v} \in \mathcal{R}}\frac{\delta \mathsf{v^T}  \mathsf{C^T} \mathsf{M_\lambda^{-2}}\mathsf{C} \mathsf v}{\mathsf{v^T}(\mathsf{A} +\delta \mathsf{C^T} \mathsf{M_\lambda^{-2}} \mathsf{C} )\mathsf v},$$
              where, to keep the notation simple, we will again denote by  $\mathcal{R}$ the set of vectors $\mathsf{v}$ that are $\mathsf{A}$-orthogonal to $\ker(\mathsf{C})$.\\
              We multiply and then divide by $\mathsf{v^T} \mathsf{A} \mathsf v$ to obtain
              $$\epsilon =\min_{\mathsf{v}\in \mathcal{R}} \frac{\delta \mathsf{v^T}   \mathsf{C^T} \mathsf{M_\lambda^{-2}}\mathsf{C} \mathsf v }{\mathsf{v^T} \mathsf{A} \mathsf v} \>\frac{\mathsf{v^T} \mathsf{A} \mathsf v}{ \mathsf{v^T} \mathsf{A} \mathsf v +\delta \mathsf{v^T} \mathsf{C^T} \mathsf{M_\lambda^{-2}} \mathsf{C} \mathsf v}. $$
              Once again, we define the generalized Rayleigh quotient $r(\mathsf v) \coloneqq \frac{ \mathsf{v^T} \mathsf{C^T} \mathsf{M_\lambda^{-2}} \mathsf{C} \mathsf v}{\mathsf{v^T} \mathsf A \mathsf v}$, so that $\epsilon$ can be written as
              $$\epsilon = \min_{\mathsf{v}\in \mathcal{R}} \frac{\delta r(\mathsf v)}{1+\delta r(\mathsf v)} = \min_{\mathsf{v}\in \mathcal{R}} f\left(r(\mathsf v)\right),$$
              where $f(r(\mathsf{v}))$ is a monotone increasing function for $r(\mathsf{v}) > 0$. Therefore, we need to characterize $f(\min_{\mathsf{v}\in \mathcal{R}} r(\mathsf{v}))$. To this aim, we use Lemma~\ref{lemma:algebraic_infsup_C}, from which we know that $\min_{\mathsf{v} \in \mathcal{R}} r(\mathsf v) \ge \bar\beta_2^2$, therefore it follows that
              \begin{equation}\label{eq:secondcase}
                  \epsilon \ge f(\bar\beta_2^2) = \frac{\delta \bar \beta_2^2}{1+\delta \bar \beta_2^2}>0,
              \end{equation}
              uniformly in $h_\Gamma$.
              \begin{remark}
                  We emphasize that this result, derived in the context of the Stokes fictitious domain problem, also applies to the Poisson fictitious domain problem. In particular, it shows that the eigenvalues of the preconditioned matrix remain uniformly bounded away from zero also for the Poisson problem.
              \end{remark}
              }
              
              \vspace{0.1cm}
              {
        \item \( \mathsf{x} \notin \ker(\mathsf{B}) \cup \ker(\mathsf{C}) \).
              From Theorem~\ref{thm:spectral_AL}, we know that
              $$
                  \theta \le \lambda =
                  \frac{ \mathsf{x^T} \big( \gamma\, \mathsf{B^T}\mathsf{M_p^{-1}} \mathsf{B} + \delta\, \mathsf{C^T}\mathsf{M_\lambda^{-2}} \mathsf{C} \big) \mathsf{x} }
                  { \mathsf{x^T}\big( \mathsf{A} + \gamma\, \mathsf{B^T}\mathsf{M_p^{-1}} \mathsf{B} + \delta\, \mathsf{C^T}\mathsf{M_\lambda^{-2}} \mathsf{C} \big) \mathsf{x} }.
              $$
              As in the previous cases, we have
              $$
                  \theta =
                  \min_{\mathsf{v} \in \mathcal{R}}
                  \frac{ \mathsf{v^T}\big( \gamma\, \mathsf{B^T}\mathsf{M_p^{-1}} \mathsf{B} + \delta\, \mathsf{C^T}\mathsf{M_\lambda^{-2}} \mathsf{C} \big) \mathsf{v} }
                  { \mathsf{v^T}\big( \mathsf{A} + \gamma\, \mathsf{B^T}\mathsf{M_p^{-1}} \mathsf{B} + \delta\, \mathsf{C^T}\mathsf{M_\lambda^{-2}} \mathsf{C} \big) \mathsf{v} },
              $$
              where now
              $$
                  \mathcal{R} \coloneqq \big\{ \mathsf{v} \in \mathbb{R}^n \,\big|\, \mathsf{u^T}\mathsf{A} \mathsf{v} = 0 \quad \forall \mathsf{u} \in \ker(\mathsf{B}) \cap \ker(\mathsf{C}) \big\}.
              $$
              Proceeding as before, we multiply and divide by \( \mathsf{v^T}\mathsf{A} \mathsf{v} \), and define the generalized Rayleigh quotient
              $$
                  r(\mathsf{v}) \coloneqq
                  \frac{ \mathsf{v^T}\big(  \mathsf{B^T}\mathsf{M_p^{-1}} \mathsf{B} + \mathsf{C^T}\mathsf{M_\lambda^{-2}} \mathsf{C} \big) \mathsf{v} }
                  { \mathsf{v^T}\mathsf{A} \mathsf{v} }.
              $$

              It follows that
              $$
                  \theta \geq
                  f\left(\min \{\gamma,\delta\} \, \min_{\mathsf{v} \in \mathcal{R}} r(\mathsf{v}) \right),
              $$
              where $ f(\cdot)$  is defined analogously as before. Owing to Lemma~\eqref{lemma:algebraic_infsup_BC}, we know that
              $
                  \min_{\mathsf{v} \in \mathcal{R}} r(\mathsf{v}) \geq \bar\beta_3^2,
              $ which in turn implies
              \begin{equation}\label{eqn:thirdcase}
                  \theta \geq f\left( \min \{\gamma, \delta\} \, \bar\beta_3^2\right) = \frac{\min \{\gamma, \delta\} \,\bar\beta_3^2}{1+ \min \{\gamma, \delta\} \,\bar\beta_3^2} > 0,
              \end{equation}
              uniformly in \(h_\Omega\) and \(h_\Gamma\).
              }
    \end{itemize}
\end{proof}
\vspace{0.1cm}
{
    \begin{remark}[Three-dimensional case]
        When $\Omega \subset \mathbb{R}^3$, the immersed domain $\Gamma$ is discretized with a surface mesh $\Gamma_h$. Assuming a quasi-uniform discretization for $\Gamma_h$, it holds

        \begin{equation}
            c h_\Gamma^2 \le \frac{ \mathsf{\upmu^T}\mathsf{M_\lambda} \mathsf{\upmu}  }{\mathsf{\upmu^T} \mathsf{\upmu}} \le C h_\Gamma^2 \quad \quad \forall \mathsf{\upmu} \in \mathbb{R}^l,
        \end{equation}
        yielding (see also Appendix ~\ref{sec:app:spectral_h_mass})
        \begin{equation}\label{eq:bounds1}
            0 < \frac{c}{C^2 h_\Gamma} \le \frac{\mathsf{\upmu^T} \mathsf{M_\lambda^{-2}} \mathsf{\upmu}}{\mathsf{\upmu^T} (h_\Gamma \mathsf{M_\lambda})^{-1} \mathsf{\upmu}} \le \frac{C}{c^2 h_\Gamma}.
        \end{equation}
        Setting again $\mathsf{\upmu} = C \mathsf v$ in the proof of Lemma~\ref{lemma:algebraic_infsup_C}, it holds that
        \begin{equation}\label{eq:2d}
            \min_{\{\mathsf{v} \in \mathbb{R}^n | \mathsf{u^T}\mathsf{Av}  =0 \text{ for }  \mathsf{u} \in \ker(\mathsf{C}\}}\frac{\mathsf{v^T} \mathsf{C^T} \mathsf{M_\lambda^{-2}}\mathsf{C}\mathsf{v} }{ \mathsf{v^T}\mathsf{A} \mathsf v } \ge \frac{\bar \beta_2^2}{h_\Gamma}.
        \end{equation}
        Following the same steps used to derive Equation~\eqref{eq:secondcase}, we employ the inequality in~\eqref{eq:2d} above to obtain
        \begin{equation}\label{eqn:bound_h_beta}
            0<\frac{\delta \bar \beta_2^2}{h_\Gamma+\delta \bar \beta_2^2} \le \epsilon \quad \quad \text{for}\>\>  \mathsf x \in  \ker (\mathsf{B})\setminus  \ker (\mathsf{C}).
        \end{equation}
        By repeating the same reasoning but starting from Lemma~\ref{lemma:algebraic_infsup_BC}, and simply adapting Equation~\eqref{eqn:h_combined_algebraic}, we get
        \begin{equation}\label{eqn:bound_3d}
            0 <  \frac{\min \{\gamma, \delta\} \, \tilde{\beta}^2_3}{\max \{1, \frac{C^2 h_\Gamma}{c}\} \,+\min \{\gamma, \delta\} \, \tilde{\beta}^2_3} \leq \theta \quad \quad \text{for} \>\>\mathsf{x} \notin \ker(\mathsf{B}) \cup \ker(\mathsf{C}).
        \end{equation}
        Notice how in these cases the lower bound for the eigenvalues of the preconditioned matrix depends explicitly on $h_\Gamma$. However, it is immediate to observe that as $h_\Gamma \rightarrow 0$, the lower bound in~\eqref{eqn:bound_h_beta} tends to $1$, and also~\eqref{eqn:bound_3d} remains bounded away from zero. Therefore, we can still conclude that also in this scenario the bounds are robust with respect to the discretization parameters.
    \end{remark}
}

\section{Spectral analysis of the inexact variant of $\mathcal{P}_{\gamma \delta}$ }\label{sec:inexact}
In this Section, we discuss in detail the eigenvalue distribution of the preconditioned matrix when an inexact version of the proposed AL-based preconditioner~\eqref{eqn:prec_AL_stokes} is employed. We mainly follow the analysis presented in~\cite{bakrani2023preconditioningtechniquesclassdouble}. For the sake of readability, we now drop $\gamma$, $\delta$ and write $\mathsf{{\bar A}}$ in place of $\mathsf{{A_{\gamma\delta}}}$ {\color{black}and $\mathsf{P}$ in place of $\mathcal{P}_{\gamma \delta}$}, so that our \emph{ideal} preconditioner reads
\begin{equation}\label{eq:prec_id}
    \mathsf{P}=
    \begin{bmatrix}
        \mathsf{{\bar A}} & \mathsf{B^T}                & \mathsf{C^T}                \\
        0                 & -\frac{1}{\gamma}\mathsf{Q} & 0                           \\
        0                 & 0                           & -\frac{1}{\delta}\mathsf{W}
    \end{bmatrix}.
\end{equation}
In order to use the same notation as in~\cite{bakrani2023preconditioningtechniquesclassdouble}, we define
$$\mathsf{S} \coloneqq \frac{1}{\gamma}\mathsf{Q} \qquad \text{and} \qquad \mathsf{X} \coloneqq \frac{1}{\delta}\mathsf{W}.$$
Due to the expensive solve associated in particular with the (1,1)-block, the ideal preconditioner $\mathsf{P}$ is not practical and needs to be replaced by an approximation. In practice, we will employ approximations also for the matrices $\mathsf{Q}$ and $\mathsf{W}$. Hence, we are to analyze the following inexact version:
$$\mathsf{\Bar{P}} \coloneqq \begin{bmatrix}
        \widehat{\mathsf{A}} & \mathsf{B^T}          & \mathsf{C^T}          \\
        0                    & -\widehat{\mathsf{S}} & 0                     \\
        0                    & 0                     & -\widehat{\mathsf{X}}
    \end{bmatrix},$$ where $\widehat{\mathsf{A}}$, $\widehat{\mathsf{S}}$, and $\widehat{\mathsf{X}}$ represent symmetric positive definite approximations of $\mathsf{\bar A}$, $\mathsf{S}$, and $\mathsf{X}$, respectively. The spectral properties of the preconditioned matrix will be given in terms of the eigenvalues of $\widehat{\mathsf{A}}^{-1} \mathsf{\bar A}$, $\widehat{\mathsf{S}}^{-1} \mathsf{\Tilde{S}}$, and $\widehat{\mathsf{X}}^{-1} \mathsf{\Tilde{X}}$, where $\Tilde{\mathsf{S}} = \mathsf{B}\widehat{\mathsf{A}}^{-1} \mathsf{B^T}$ and $\Tilde{\mathsf{X}} = \mathsf{C}\widehat{\mathsf{A}}^{-1} \mathsf{C^T}$. We define

\begin{equation}
    \gamma_{\text{min}}^A=\lambda_{\text{min}}(\widehat{\mathsf{A}}^{-1} \mathsf{\bar A}), \qquad \gamma_{\text{max}}^A=\lambda_{\text{max}}(\widehat{\mathsf{A}}^{-1} \mathsf{\bar A}), \qquad \gamma_A \in [\gamma_{\text{min}}^A,\gamma_{\text{max}}^A],
\end{equation}
\begin{equation}
    \gamma_{\text{min}}^S=\lambda_{\text{min}}^+(\widehat{\mathsf{S}}^{-1} \mathsf{\Tilde{S}}), \qquad \gamma_{\text{max}}^S=\lambda_{\text{max}}(\widehat{\mathsf{S}}^{-1} \mathsf{\Tilde{S}}), \qquad \gamma_S \in [\gamma_{\text{min}}^S,\gamma_{\text{max}}^S],
\end{equation}
\begin{equation}
    \gamma_{\text{min}}^X=\lambda_{\text{min}}(\widehat{\mathsf{X}}^{-1} \mathsf{\Tilde{X}}), \qquad \gamma_{\text{max}}^X=\lambda_{\text{max}}(\widehat{\mathsf{X}}^{-1} \mathsf{\Tilde{X}}), \qquad \gamma_X \in [\gamma_{\text{min}}^X,\gamma_{\text{max}}^X].
\end{equation}

We note that $\mathsf{B}$ is rank deficient by 1 and, consequently, the matrix $\Tilde{\mathsf S}$ is symmetric positive semidefinite. Moreover, since $\widehat{\mathsf{S}}^{-1} \Tilde{\mathsf S}$ is similar to $\widehat{\mathsf{S}}^{-\frac{1}{2}} \Tilde{\mathsf{S}} \widehat{\mathsf{S}}^{-\frac{1}{2}}$, which is a symmetric and positive semidefinite matrix, the eigenvalues of $\widehat{\mathsf{S}}^{-1} \Tilde{\mathsf S}$ are nonnegative {\color{black}and its smallest eigenvalue is equal to $0$}.
However, since the presence of a zero eigenvalue does not affect the convergence of preconditioned GMRES, we focus only on the smallest positive eigenvalue, denoted as $\lambda_{\min} ^+(\widehat{\mathsf{S}}^{-1} \mathsf{\Tilde{S}})$.

We then use the fact that looking for the eigenvalues of $\Bar{\mathsf{P}}^{-1}\mathcal{A}_{\gamma \delta}$ is equivalent to solving

\begin{equation}\label{eqn:equivalent_eigs}
    \mathsf{\Bar{D}}^{-\frac{1}{2}}\mathcal{A_{\gamma \delta}}\mathsf{\Bar{D}}^{-\frac{1}{2}} \mathsf{w} = \lambda \mathsf{\Bar{D}}^{-\frac{1}{2}}\mathsf{\Bar{P}}\mathsf{\Bar{D}}^{-\frac{1}{2}} \mathsf{w},
\end{equation}where

$$\mathsf{\Bar{D}} \coloneqq \begin{bmatrix}
        \widehat{\mathsf{A}} & 0                    & 0                    \\
        0                    & \widehat{\mathsf{S}} & 0                    \\
        0                    & 0                    & \widehat{\mathsf{X}}
    \end{bmatrix}.$$

\noindent Let $\mathsf{\Tilde{A}} \coloneqq \widehat{\mathsf{A}}^{-\frac{1}{2}} {\mathsf{\bar A}} \widehat{\mathsf{A}}^{-\frac{1}{2}}$, $\mathsf{R}\coloneqq  \widehat{\mathsf{S}}^{-\frac{1}{2}} \mathsf{B} \widehat{\mathsf{A}}^{-\frac{1}{2}}$, and $\mathsf{K} \coloneqq \widehat{\mathsf{X}}^{-\frac{1}{2}} \mathsf{C} \widehat{\mathsf{A}}^{-\frac{1}{2}}$. Since $\widehat{\mathsf{A}}$, $\widehat{\mathsf{S}}$, and $\widehat{\mathsf{X}}$ are SPD,
$\mathsf{B}$ is rank deficient by 1, and $\mathsf{C}$ has full row rank, then $\Tilde{\mathsf A}$ is SPD, $\mathsf R$ has the same rank as $\mathsf{B}$ and $\mathsf{K}$ has full row rank.
The explicit computation of~\eqref{eqn:equivalent_eigs}  yields the following generalized eigenvalue problem:

\begin{equation}
    \begin{bmatrix}\label{eqn:inexact_prec_system}
        \mathsf{\Tilde{A}} & \mathsf{R^T} & \mathsf{K^T} \\
        \mathsf{R}         & 0            & 0            \\
        \mathsf{K}         & 0            & 0
    \end{bmatrix}
    \begin{bmatrix}
        \mathsf x \\
        \mathsf y \\
        \mathsf z
    \end{bmatrix}=\lambda
    \begin{bmatrix}
        \mathsf{I} & \mathsf{R^T} & \mathsf{K^T} \\
        0          & -\mathsf{I}  & 0            \\
        0          & 0            & -\mathsf{I}
    \end{bmatrix}
    \begin{bmatrix}
        \mathsf x \\
        \mathsf y \\
        \mathsf z
    \end{bmatrix}.
\end{equation} Let $\lambda$ be an eigenvalue of $\Bar{\mathsf{P}}^{-1}\mathcal{A}_{\gamma \delta}$ and $(\mathsf x;\mathsf y; \mathsf z)$ a corresponding eigenvector such that $||\mathsf x||^2+||\mathsf y||^2+||\mathsf z||^2=1$. It follows from~\eqref{eqn:inexact_prec_system} that

\begin{equation}
    \mathsf{\Tilde{A}} \mathsf x - \lambda \mathsf x = (\lambda-1)\mathsf{R^T} \mathsf y + (\lambda-1)\mathsf{K^T} \mathsf z, \label{eqn:first_inexact}
\end{equation}
\begin{equation}
    \mathsf{R}\mathsf x = -\lambda \mathsf y,\label{eqn:second_inexact}
\end{equation}
\begin{equation}
    \mathsf{K} \mathsf x = -\lambda \mathsf z.\label{eqn:third_inexact}
\end{equation}
{\color{black}Following~\cite{bakrani2023preconditioningtechniquesclassdouble}, we make the assumption that $1 \in [\gamma_{\text{min}}^A,\gamma_{\text{max}}^A] $, which is very commonly satisfied in practice. Since $\widehat{\mathsf{A}}^{-1} \mathsf{\bar A}$ is similar to $\Tilde{\mathsf{A}}$, $\lambda =1$ is also an eigenvalue of $\Bar{\mathsf{P}}^{-1}\mathcal{A}_{\gamma \delta}$ with corresponding eigenvector $(\mathsf x;-\mathsf{R}\mathsf x;\mathsf{-K} \mathsf x)$, provided that $\mathsf x\ne0$. Hence, from now on we assume $\lambda \ne 1$ and $\mathsf x \ne 0$}.
\begin{itemize}
    \item $\mathsf x \in \ker(\mathsf{R}) \cap \ker(\mathsf{K})$.\\
          Equations~\eqref{eqn:second_inexact} and~\eqref{eqn:third_inexact} imply $\mathsf y=\mathsf z=0$. Then, we readily obtain from~\eqref{eqn:first_inexact}
          $$ \mathsf{\Tilde{A}} \mathsf x = \lambda \mathsf x, $$which implies that $\lambda $ is real and
          \begin{equation}
              \lambda \in [\gamma_{\text{min}}^A,\gamma_{\text{max}}^A].
          \end{equation}
          The associated eigenvector is of the form $(\mathsf x;0;0)$.
    \item $\mathsf x  \in \ker(\mathsf{K})\setminus \ker(\mathsf{R})$.\\
          Equation~\eqref{eqn:third_inexact} implies $\mathsf z=0$, which gives

          \begin{equation}\label{eqn:x_inexact_kerK}
              \mathsf{\Tilde{A}} \mathsf x - \lambda \mathsf x = (\lambda-1)\mathsf{R^T} \mathsf y,
          \end{equation}
          \begin{equation}\label{eqn:Rx_inexact_kerK}
              \mathsf{R} \mathsf x = -\lambda \mathsf y.
          \end{equation}
          After multiplying the first equation by $\mathsf x^{*}$, the {\color{black}conjugate transpose} of the second by $\mathsf y$ on the right, and inserting the second equation in the first we obtain
          $$\mathsf x^{*}  \mathsf{\Tilde{A}} \mathsf x - \lambda ||\mathsf x||^2 = -|\lambda|^2 ||\mathsf y||^2 + \Bar{\lambda} ||\mathsf y||^2.$$ Using the fact that $||\mathsf x||^2=1-||\mathsf y||^2$, we get
          \begin{equation}\label{eqn:kerK}
              \mathsf x^{*}  \mathsf{\Tilde{A}} \mathsf x - \lambda +(\lambda-\Bar{\lambda}) ||\mathsf y||^2 + |\lambda|^2 ||\mathsf y||^2=0.
          \end{equation}Writing now $\lambda =a + ib$, we obtain the following system for the real and imaginary parts
          \begin{equation}\label{eqn:be_real}
              \begin{cases}
                  \mathsf x^{*}  \mathsf{\Tilde{A} \mathsf x} - a + (a^2+b^2)||\mathsf y||^2=0, \\
                  b(2||\mathsf y||^2 -1)=0.
              \end{cases}
          \end{equation}
          From the second equation, it follows that $b=0$, or $||\mathsf y||^2=\frac{1}{2}.$ We assume $b\ne0$. Therefore, Equation~\eqref{eqn:kerK} in the system above reads as
          \begin{equation}\label{eqn:kerK_bne0}
              2\,\mathsf x^{*} \mathsf{\Tilde{A}} \mathsf x - \lambda-\Bar{\lambda} + |\lambda|^2=0
          \end{equation}

          Exploiting the identity $|\lambda|^2-\lambda-\Bar{\lambda}= |\lambda-1|^2 -1$, and dividing the last equation by $\mathsf x^{*}\mathsf x= ||\mathsf x||^2=\frac{1}{2},$ we get
          $$2 |\lambda - 1|^2 = 2-2\> \frac{\mathsf x^{*} \mathsf{\Tilde{A}} \mathsf x}{\mathsf x^{*}\mathsf x},$$
          therefore
          $$|\lambda -1|^2 \leq 1 - \gamma_{\text{min}}^A.$$
          Hence, if $1-\gamma_{\text{min}}^A \geq 0$, we have
          \begin{equation}\label{eq:circle}
              |\lambda -1| \leq \sqrt{1 - \gamma_{\text{min}}^A}.
          \end{equation}
          Conversely, if $1 - \gamma_{\text{min}}^A <0$, then there exists no $\lambda$ with nonzero imaginary part satisfying equality~\eqref{eqn:kerK_bne0}. Using $\lambda = a+ib$, Equation~\eqref{eq:circle} can be written as
          $$(a-1)^2 +b^2 \le 1-\gamma_{\text{min}}^A,$$
          which represents a circle centered in $(1,0)$ with radius $\sqrt{1-\gamma_{\text{min}}^A}$, meaning that if $1-\gamma_{\text{min}}^A \ge 0$, then the eigenvalues $\lambda$ lie in this circle.

          We now consider the case $b=0$. In this case, $\lambda$ is real, and the corresponding eigenvector can also be chosen to be real.
          Solving for $\mathsf x$ the Equation~\eqref{eqn:x_inexact_kerK} gives
          $$\mathsf x = \bigl( \mathsf{\Tilde{A} - \lambda I} \bigr)^{-1} (\lambda-1)\mathsf{R^T} \mathsf y,$$ which, plugged into Equation~\eqref{eqn:Rx_inexact_kerK} gives
          \begin{equation}\label{eqn:eqlemma}
              \mathsf R \bigl( \mathsf{\Tilde{A} - \lambda I} \bigr)^{-1} (1-\lambda)\mathsf{R^T} \mathsf y=\lambda \mathsf y
          \end{equation}

          We now quote the following Lemma from~\cite{bakrani2023preconditioningtechniquesclassdouble}, which will be used to characterized the real eigenvalues not lying in $[\gamma_{\min}^A, \gamma_{\max}^A]$.

          \begin{lemma}\label{lemma:lemma_1}

              {\color{black}Suppose that there exists an eigenvalue} $\lambda \not \in [\gamma_{\text{min}}^A,\gamma_{\text{max}}^A]$. Then, for arbitrary $\mathsf z \ne 0$, there exists a vector $\mathsf s \ne 0$ such that

              \begin{equation}
                  \frac{\mathsf {z^T} \bigl(\mathsf{\Tilde{A} - \lambda I} \bigr)^{-1}\mathsf z}{\mathsf {z^T} \mathsf z} = \Biggl( \frac{\mathsf{s^T} \mathsf{\Tilde{A}}\mathsf{s}}{\mathsf{s^T} \mathsf{s}} - \lambda \Biggr)^{-1} = (\gamma_A -\lambda)^{-1}
              \end{equation}where $\gamma_A \coloneqq \frac{\mathsf{s^T} \mathsf{\Tilde{A}}\mathsf{s}}{\mathsf{s^T} \mathsf{s}}$.
          \end{lemma}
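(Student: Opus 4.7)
The plan is to reduce the statement to a scalar identity by diagonalising $\mathsf{\Tilde{A}}$. Since $\widehat{\mathsf{A}}$ and $\mathsf{\bar A}$ are both SPD, so is $\mathsf{\Tilde{A}} = \widehat{\mathsf{A}}^{-1/2}\mathsf{\bar A}\widehat{\mathsf{A}}^{-1/2}$; write $\mathsf{\Tilde{A}} = \mathsf{U}\Lambda \mathsf{U}^T$ with $\Lambda = \mathrm{diag}(\mu_1,\dots,\mu_n)$ and $\mu_i\in[\gamma_{\min}^A,\gamma_{\max}^A]$. Setting $\mathsf{w} = \mathsf{U}^T\mathsf{z}$ and $\alpha_i = w_i^2/\|\mathsf{w}\|^2$, so that $\alpha_i\ge 0$ and $\sum_i\alpha_i=1$, a direct computation gives
\[
\frac{\mathsf{z}^T(\mathsf{\Tilde{A}}-\lambda\mathsf{I})^{-1}\mathsf{z}}{\mathsf{z}^T\mathsf{z}} \;=\; \sum_{i=1}^n \frac{\alpha_i}{\mu_i-\lambda},
\]
which exhibits the quantity of interest as a convex combination of the scalars $1/(\mu_i-\lambda)$.

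Next I would exploit the hypothesis $\lambda\notin[\gamma_{\min}^A,\gamma_{\max}^A]$ to ensure that the map $\gamma\mapsto 1/(\gamma-\lambda)$ is continuous and strictly monotone on the closed interval $[\gamma_{\min}^A,\gamma_{\max}^A]$, since its only singularity, at $\gamma=\lambda$, is excluded. Its image is therefore the closed interval with endpoints $1/(\gamma_{\min}^A-\lambda)$ and $1/(\gamma_{\max}^A-\lambda)$. Each $1/(\mu_i-\lambda)$ lies in this image, hence so does any convex combination of them; by the intermediate value theorem there exists $\gamma_A\in[\gamma_{\min}^A,\gamma_{\max}^A]$ with
\[
\frac{1}{\gamma_A-\lambda} \;=\; \sum_{i=1}^n \frac{\alpha_i}{\mu_i-\lambda} \;=\; \frac{\mathsf{z}^T(\mathsf{\Tilde{A}}-\lambda\mathsf{I})^{-1}\mathsf{z}}{\mathsf{z}^T\mathsf{z}}.
\]

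Finally, to exhibit a vector $\mathsf{s}\ne 0$ realising $\gamma_A$ as a Rayleigh quotient of $\mathsf{\Tilde{A}}$, I would invoke the standard fact that, for any SPD matrix, the continuous map $\mathsf{s}\mapsto \mathsf{s}^T\mathsf{\Tilde{A}}\mathsf{s}/\mathsf{s}^T\mathsf{s}$ on the (connected) unit sphere attains the extreme values $\gamma_{\min}^A$ and $\gamma_{\max}^A$ at the corresponding eigenvectors, and hence, again by the intermediate value theorem, attains every value in between. Picking any such $\mathsf{s}$ for the value $\gamma_A$ concludes the proof.

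Rather than a real obstacle, the only subtlety is making sure that the scalar map $\gamma\mapsto 1/(\gamma-\lambda)$ has no singularity on $[\gamma_{\min}^A,\gamma_{\max}^A]$: this is exactly where the assumption $\lambda\notin[\gamma_{\min}^A,\gamma_{\max}^A]$ enters, because otherwise the image would split into two unbounded branches and the convex-combination argument would not yield a point in the desired interval.
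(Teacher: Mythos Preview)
Your argument is correct. The paper does not prove this lemma; it explicitly quotes it from~\cite{bakrani2023preconditioningtechniquesclassdouble} and uses it as a black box, so there is no in-paper proof to compare against. Your diagonalisation of $\mathsf{\Tilde A}$, reduction to a convex combination of the scalars $1/(\mu_i-\lambda)$, and two applications of the intermediate value theorem (once for the map $\gamma\mapsto 1/(\gamma-\lambda)$ on $[\gamma_{\min}^A,\gamma_{\max}^A]$, once for the Rayleigh quotient of $\mathsf{\Tilde A}$ on the unit sphere) constitute a clean and complete proof, and your identification of the hypothesis $\lambda\notin[\gamma_{\min}^A,\gamma_{\max}^A]$ as precisely what keeps the scalar map continuous on the interval is exactly the right observation.
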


          {
          Multiplying Equation~\eqref{eqn:eqlemma} by $\frac{\mathsf{y^T}}{\mathsf{y^T} \mathsf{y}}$, we obtain
          \[
              (1 - \lambda)\, \frac{(\mathsf{R^T} \mathsf{y})^\mathsf{T} (\mathsf{\Tilde{A}} - \lambda \mathsf{I})^{-1} (\mathsf{R^T} \mathsf{y})}{\mathsf{y^T} \mathsf{y}}=\lambda.
          \]
          Using the lemma above, this yields
          \[
              (1 - \lambda)\, (\gamma_A - \lambda)^{-1} \frac{\mathsf{y^T} \mathsf{R} \mathsf{R^T} \mathsf{y}}{\mathsf{y^T} \mathsf{y}} = \lambda.
          \]
          Note that, by the definition of $\mathsf{R}$, we have $\mathsf{R} \mathsf{R^T} = \mathsf{\hat{S}}^{-1/2} \mathsf{\Tilde{S}} \mathsf{\hat{S}}^{-1/2}$, which is similar to $\mathsf{\hat{S}}^{-1} \mathsf{\Tilde{S}}$. Therefore,
          \[
              \frac{\mathsf{y^T} \mathsf{R} \mathsf{R^T} \mathsf{y}}{\mathsf{y^T} \mathsf{y}} \in [\gamma_{\min}^S, \gamma_{\max}^S],
          \]
          and
          $$(1-\lambda) (\gamma_A -\lambda)^{-1} \gamma_S = \lambda.$$
          It follows that $\lambda$ satisfies the quadratic equation
          \[
              \lambda^2 - (\gamma_A + \gamma_S)\lambda + \gamma_S = 0.
          \]}
          \emph{The next steps are identical to the ones in~\cite{bakrani2023preconditioningtechniquesclassdouble}, and are reported here only for completeness.}

          The two solutions of the quadratic equation above are
          $$\lambda_{1,2}=\frac{\gamma_A+\gamma_S \pm \sqrt{(\gamma_A+\gamma_S)^2 - 4 \gamma_S}}{2}.$$
          The first root $\lambda_{1}$ can be bounded by
          $$\lambda_{1}=\frac{\gamma_A+\gamma_S + \sqrt{(\gamma_A+\gamma_S)^2 - 4 \gamma_S}}{2} \leq \gamma_A +\gamma_S\leq \gamma_{\text{max}}^A + \gamma_{\text{max}}^S,$$while
          $$\lambda_{2}=\frac{2 \gamma_S}{\gamma_A+\gamma_S + \sqrt{(\gamma_A+\gamma_S)^2 - 4 \gamma_S}} \geq \frac{\gamma_{\text{min}}^S}{\gamma_{\text{max}}^A+\gamma_{\text{max}}^S}.$$All in all, we have
          \begin{equation}
              \lambda \in \left[\frac{\gamma_{\text{min}}^S}{\gamma_{\text{max}}^A+\gamma_{\text{max}}^S}, \gamma_{\text{max}}^A + \gamma_{\text{max}}^S \right].
          \end{equation}

    \item $\mathsf x \in \ker(\mathsf{R}) \setminus \ker(\mathsf{K})$.

          \emph{Thanks to the particular form of the preconditioned system in~\eqref{eqn:inexact_prec_system}, this case is completely analogous to the previous one. Therefore, we only report the final results.}

          Proceeding in the same fashion as in the previous paragraph, we have two cases to distinguish, depending on the solution $b$ of Equation~\eqref{eqn:be_real}.
          When $b\ne0$:
          if $1-\gamma_{\text{min}}^A \geq 0$, we have
          \begin{equation}
              |\lambda -1| \leq \sqrt{1 - \gamma_{\text{min}}^A}.
          \end{equation}
          If $b=0$:
          \begin{equation}
              \lambda \in \left[\frac{\gamma_{\text{min}}^X}{\gamma_{\text{max}}^A+\gamma_{\text{max}}^X}, \gamma_{\text{max}}^A + \gamma_{\text{max}}^X \right].
          \end{equation}

    \item $\mathsf{x} \not \in \ker(\mathsf R) \cup \ker(\mathsf K)$.

          We start by multiplying Equation~\eqref{eqn:first_inexact} by $\mathsf x^{*}$, the {\color{black}conjugate transpose} of Equation~\eqref{eqn:second_inexact} by $\mathsf y$ on the right, and the {\color{black}conjugate transpose} of Equation~\eqref{eqn:third_inexact} by $\mathsf z$ on the right, obtaining
          \begin{equation}\label{eqn:x_not_in_both_kers}
              \mathsf x^{*}\mathsf{\Tilde{A}} \mathsf x - \lambda ||\mathsf x||^2 = (\lambda-1)\mathsf x^{*}\mathsf{R^T} \mathsf y + (\lambda-1)\mathsf x^{*}\mathsf{K^T} \mathsf z,
          \end{equation}
          \begin{equation}
              \mathsf x^{*}\mathsf{R^T}\mathsf y = -\Bar{\lambda} ||\mathsf y||^2,
          \end{equation}
          \begin{equation}
              \mathsf x^{*}\mathsf{K^T}\mathsf z = -\Bar{\lambda} ||\mathsf z||^2.
          \end{equation}

          By insertion of the last two equations in~\eqref{eqn:x_not_in_both_kers} we get
          $$\mathsf x^{*}\mathsf{\Tilde{A}} \mathsf x- \lambda ||\mathsf x||^2 = -(\lambda-1)\Bar{\lambda} ||\mathsf y||^2 -(\lambda-1)\Bar{\lambda}||\mathsf z||^2,
          $$which after simple algebra gives
          $$\mathsf x^{*}\mathsf{\Tilde{A}} \mathsf x - \lambda ||\mathsf x||^2 = -|\lambda|^2 (||\mathsf y||^2 + ||\mathsf z||^2) + \Bar{\lambda}(||\mathsf y||^2 + ||\mathsf z||^2).
          $$Using $||\mathsf x||^2+||\mathsf y||^2+||\mathsf z||^2=1$ {\color{black}yields}
          \begin{equation}\label{eqn:realimag}
              \mathsf x^{*}\mathsf{\Tilde{A}} \mathsf x = \Bigl( |\lambda|^2 + \lambda - \Bar{\lambda} \Bigr)||\mathsf x||^2 + \Bar{\lambda} - |\lambda|^2.
          \end{equation}
          By writing $\lambda \coloneqq a + ib$, we obtain the {\color{black}following} equations for the real and imaginary parts:
          \begin{equation}
              \begin{cases}\label{eqn:be_real_again}
                  \mathsf x^{*}\mathsf{\Tilde{A}} \mathsf x - (a^2+b^2) ||\mathsf x||^2 -a +a^2 +b^2=0, \\
                  b(1-2||\mathsf x||^2)=0,
              \end{cases}
          \end{equation}from which we get again $b=0$, or $||\mathsf x||^2=\frac{1}{2}$. We assume $b\ne0$, so that Equation~\eqref{eqn:realimag} becomes
          \begin{equation}\label{eqn:imag_in_both_kers}
              2\, \mathsf x^{*}\mathsf{\Tilde{A}} \mathsf x = -|\lambda|^2 + \lambda + \Bar{\lambda}.
          \end{equation}
          Exploiting again the identity $|\lambda|^2-\lambda-\Bar{\lambda}= |\lambda-1|^2 -1$, and dividing the last equation by $\mathsf x^{*}\mathsf x= ||\mathsf x||^2=\frac{1}{2},$ we get
          $$ |\lambda - 1|^2 = 1-\> \frac{\mathsf x^{*} \mathsf{\Tilde{A}} \mathsf x}{\mathsf x^{*}\mathsf x},$$
          therefore
          $$|\lambda -1|^2 \leq 1 - \gamma_{\text{min}}^A.$$
          Hence, if $1-\gamma_{\text{min}}^A \geq 0$, we have
          \begin{equation}
              |\lambda -1| \leq \sqrt{1 - \gamma_{\text{min}}^A}.
          \end{equation}
          If $1 - \gamma_{\text{min}}^A <0$, then there exists no $\lambda$ with nonzero imaginary part satisfying~\eqref{eqn:imag_in_both_kers}.

          We finally consider the case $b=0$. From Equations~\eqref{eqn:second_inexact} and~\eqref{eqn:third_inexact} we immediately obtain

          $$\mathsf y = -\frac{1}{\lambda}\mathsf{R}\mathsf x, \quad \quad \mathsf z = -\frac{1}{\lambda}\mathsf{K}\mathsf x.$$ Plugging these into Equation~\eqref{eqn:first_inexact}, we get

          $$ \mathsf{\Tilde{A}} \mathsf x - \lambda \mathsf x = -\frac{(\lambda-1)}{\lambda}\mathsf{R^TR}\mathsf x -\frac{(\lambda-1)}{\lambda}\mathsf{K^TK}\mathsf x,$$then we multiply both sides by $ \frac{\lambda \mathsf {x^{T}}}{\mathsf {x^{T}} \mathsf x}$, obtaining the {\color{black}following} quadratic equation in $\lambda${\color{black}:}

          \begin{equation}
              \lambda^2 - \lambda \Bigl( \frac{\mathsf {x^{T}} \mathsf{\Tilde{A}} \mathsf x}{\mathsf {x^{T}}\mathsf x} + \frac{\mathsf {x^{T}} \mathsf{R^T R} \mathsf x}{\mathsf {x^{T}}\mathsf x} + \frac{\mathsf {x^{T}} \mathsf{K^TK} \mathsf x}{\mathsf {x^{T}}\mathsf x} \Bigr) +  \Bigl( \frac{\mathsf {x^{T}} \mathsf{R^T R} \mathsf x}{\mathsf {x^{T}}\mathsf x} + \frac{\mathsf {x^{T}} \mathsf{K^TK} \mathsf x}{\mathsf {x^{T}}\mathsf x} \Bigr) =0.
          \end{equation}
          We now define $$\gamma_R \coloneqq \frac{\mathsf{x^T} \mathsf{R^T R}\mathsf x}{\mathsf{x^T} \mathsf x} \quad \text{and} \quad \gamma_K \coloneqq \frac{\mathsf{x^T}\mathsf{K^T K}\mathsf x}{\mathsf{x^T} \mathsf x}.$$
          Therefore,
          \begin{equation}
              \lambda^2 - \lambda ( \gamma_A + \gamma_R + \gamma_K) + \gamma_R + \gamma_K  =0.
          \end{equation}
          Solving for $\lambda$ gives the following two (real) solutions{\color{black}:}

          $$\lambda_{1,2}=\frac{\gamma_A+\gamma_R + \gamma_K \pm \sqrt{(\gamma_A+\gamma_R + \gamma_K)^2 - 4 (\gamma_R + \gamma_K)}}{2}.$$
          The first root $\lambda_{1}$ can be bounded by
          $$\lambda_{1}=\frac{\gamma_A+\gamma_R + \gamma_K + \sqrt{(\gamma_A+\gamma_R + \gamma_K)^2 - 4 (\gamma_R + \gamma_K)}}{2} \leq \gamma_A + \gamma_R + \gamma_K \leq \gamma_{\text{max}}^A + \gamma_{\text{max}}^R + \gamma_{\text{max}}^K,$$while
          $$\lambda_{2}=\frac{2(\gamma_R + \gamma_K)}{(\gamma_A+\gamma_R + \gamma_K)+ \sqrt{(\gamma_A+\gamma_R + \gamma_K)^2 - 4 (\gamma_R + \gamma_K)}} \geq \frac{ \gamma_{\text{min}}^R + \gamma_{\text{min}}^K}{\gamma_{\text{max}}^A + \gamma_{\text{max}}^R + \gamma_{\text{max}}^K}.$$

          From the singular value decomposition of $\mathsf{R}$ it follows that the nonzero eigenvalues of $\mathsf{R^TR}$ and $\mathsf{K^TK}$ are the same ones of $\mathsf{RR^T}$ and $\mathsf{KK^T}$, respectively. Therefore, we have
          $$\gamma_R \in [0,\gamma_{\text{max}}^S] \quad \text{and} \quad \gamma_K \in [0,\gamma_{\text{max}}^X].$$
          Moreover, the assumption $\mathsf x \not \in \ker(\mathsf R) \cup \ker(\mathsf K)$ means that we are effectively working in the space where $\mathsf{R^TR}$ and $\mathsf{K^TK}$ are strictly positive definite, thus $\gamma_R>0$ and $\gamma_K>0$. More precisely, we have $\gamma_R \in [\gamma_{\text{min}}^S,\gamma_{\text{max}}^S]$. The very same argument applies to $\gamma_K$, allowing to write
          $$\lambda_{2} \geq \frac{ \gamma_{\text{min}}^S + \gamma_{\text{min}}^X}{\gamma_{\text{max}}^A + \gamma_{\text{max}}^S + \gamma_{\text{max}}^X} > 0.$$
          We conclude that
          \begin{equation}
              \lambda \in \left[\frac{ \gamma_{\text{min}}^S + \gamma_{\text{min}}^X}{\gamma_{\text{max}}^A + \gamma_{\text{max}}^S + \gamma_{\text{max}}^X}, \gamma_{\text{max}}^A + \gamma_{\text{max}}^S +\gamma_{\text{max}}^X \right].
          \end{equation}

\end{itemize}

\subsection{Spectrum of the matrix preconditioned with the inexact variant of $\mathcal{P}_{\gamma \delta}$}
We verify the correctness of the bounds using the same configuration as in Section ~\ref{subsec:spectrum}.
As an inexact variant of the \emph{ideal} preconditioner in~\eqref{eqn:prec_AL_stokes},
we consider the case in which we approximate the mass matrices on the pressure and multiplier space with diagonal matrices. In particular, we consider $\widehat{\mathsf{M}}_{\mathsf{p}} \coloneqq \text{diag}(\mathsf{M_p})$, which is a widely used choice in the context of the classical Stokes problem~\cite{ElmanSilvesterWathen},
and $\widehat{\mathsf{M}}_\lambda \coloneqq \text{diag}(\mathsf{M_\lambda})^2$. As an approximation for $\mathsf{A_{\gamma\delta}}$, we employ its incomplete Cholesky factorization, computed using the \texttt{ichol} function in \textsc{Matlab} with a drop tolerance set to $10^{-1}$. Note that for the numerical experiments reported in Section~\ref{sec:numerical_experiments}, the augmented (1,1)-block is always inverted using a
CG solver preconditioned by a single V-cycle of AMG, with a loose tolerance of $10^{-2}$, whereas the inversion of $\mathsf{M_p}$ is performed using a CG solver preconditioned by the lumped pressure mass matrix. However, to verify the theoretical findings, here different approximations are considered for the augmented block $\mathsf{A_{\gamma\delta}}$ and for $\mathsf{M_p}$, for simplicity of implementation.\\
To summarize, the following (positive definite) approximations for $\mathsf{\bar A}$, $\mathsf{S}$, and $\mathsf{X}$ are considered:
\begin{itemize}
    \item $\mathsf{\widehat{A}} \coloneqq \text{ichol}(\mathsf{\bar A}, \text{drop\_tol=}10^{-1}),$
    \item $\mathsf{\widehat{S}} \coloneqq  \frac{1}{\gamma} \, \text{diag}\bigl(\mathsf{M_p}\bigr),$
    \item $\mathsf{\widehat{X}} \coloneqq \frac{1}{\delta} \, \text{diag}\bigl(\mathsf{M_\lambda}\bigr)^2.$
\end{itemize}
We show in Figure~\ref{fig:check_bounds} the spectrum of the preconditioned matrix $\Bar{\mathsf P}^{-1} \mathcal{A}_{\gamma \delta}$ when the inexact variant above is employed. In particular, we show the lower (green diamond) and upper (orange square) {\color{black}bound} for the real eigenvalues. We notice how the lower and upper bounds for the real part of the spectrum confirm the theoretical findings above. As already explained, $\lambda = 0$ is an eigenvalue of the original system, and hence it coincides with the numerical lower bound. Moreover, it is evident how all the imaginary eigenvalues are well-contained in the disk of radius $\sqrt{1- \gamma_{\min}^\mathsf{A}}$.

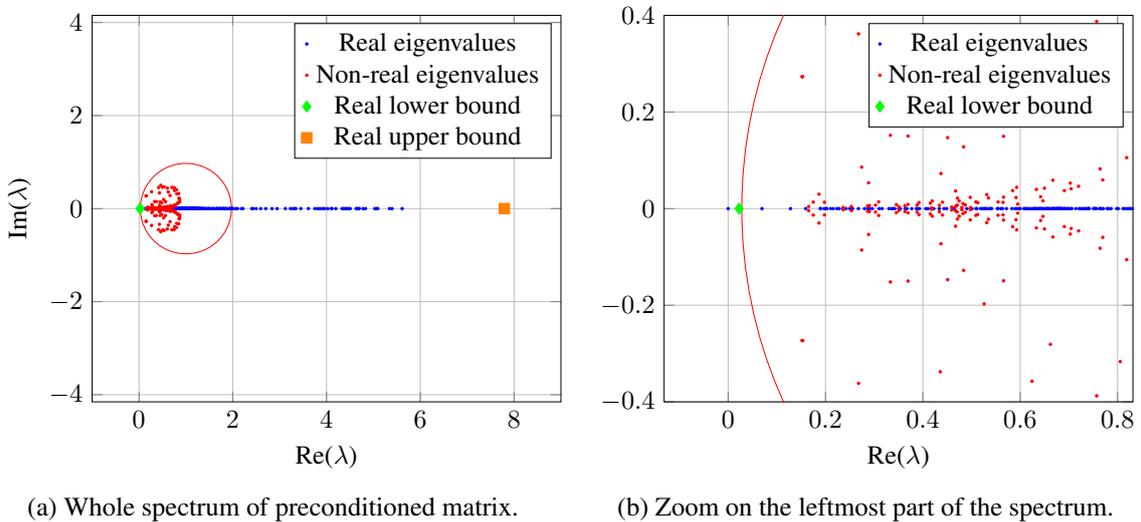
\begin{figure}[h]
    \centering
    \begin{subfigure}{0.45\textwidth} %
        \centering
        \begin{tikzpicture}[scale=0.9] %
            \begin{axis}[
                    xlabel={Re($\lambda$)}, ylabel={Im($\lambda$)},
                    xmin=-1, xmax=9,
                    ymin=-2, ymax=2,
                    grid=both,
                    axis equal,
                ]
                \addplot[only marks, mark=*, mark size=0.5pt, color=blue] table {data/stokes/real.dat};
                \addlegendentry{Real eigenvalues}
                \addplot[only marks, mark=*, mark size=0.5pt, color=red] table {data/stokes/img.dat};
                \addlegendentry{{\color{black}Non-real} eigenvalues}
                \addplot[green, thick, mark=diamond*, only marks]
                coordinates {
                        (0.0224096129431830, 0)

                    };
                \addlegendentry{Real lower bound}
                \addplot[orange, thick, mark=square*, only marks]
                coordinates {

                        (7.79243044745286, 0)
                    };
                \addlegendentry{Real upper bound }
                \draw[red, thin] (1,0) circle[radius=0.971990808981998];
                \addlegendentry{Circle, $r = \sqrt{1-\gamma_{\min}^A}$ }

            \end{axis}
        \end{tikzpicture}
        \caption{Whole spectrum of preconditioned matrix.}
    \end{subfigure}\hspace{0.5cm} %
    \begin{subfigure}{0.45\textwidth}
        \centering
        \begin{tikzpicture}[scale=0.9]
            \begin{axis}[
                    at={(7,0)}, %
                    xlabel={Re($\lambda$)},
                    xmin=-0.1, xmax=0.8,
                    ymin=-0.4, ymax=0.4,
                    grid=both,
                    axis equal,  %
                ]
                \addplot[only marks, mark=*, mark size=0.5pt, color=blue] table {data/stokes/real.dat};
                \addlegendentry{Real eigenvalues}
                \addplot[only marks, mark=*, mark size=0.5pt, color=red] table {data/stokes/img.dat};
                \addlegendentry{{\color{black}Non-real} eigenvalues}
                \addplot[green, thick, mark=diamond*, only marks]
                coordinates {
                        (0.0224096129431830, 0)

                    };
                \addlegendentry{Real lower bound}          %
                \draw[red,  thin] (1,0) circle[radius=0.971990808981998];
            \end{axis}
        \end{tikzpicture}
        \caption{Zoom on the leftmost part of the spectrum.}
    \end{subfigure}
    \vspace{0.2cm}
    \caption{Numerical check of the bounds for the preconditioned matrix when an inexact variant of the preconditioner is employed.}
    \label{fig:check_bounds}
\end{figure}

\section{Numerical experiments}\label{sec:numerical_experiments}

In this Section, we present a series of numerical experiments to assess the performance of the proposed AL-based preconditioners for the problems analyzed in this work.
All the experiments have been performed using the \textsc{C++} finite element library \textsc{deal.II}~\cite{dealIIdesign,dealII95} and are
available at a maintained GitHub repository provided by the authors\footnote{Available at~\texttt{\url{https://github.com/fdrmrc/fictitious_domain_AL_preconditioners}}}. Building on top of
a generic library, we can exploit (in a dimension-independent fashion) features such as adaptive mesh refinement, higher-order elements, handling of non-matching meshes, hanging node constraints, as
well as API to high-performance preconditioners provided by state-of-the-art linear algebra libraries such as~\textsc{Trilinos}~\cite{Trilinos} and~\text{PETSc}~\cite{petsc}. Our code is memory-distributed
and builds on the Message Passing Interface (MPI) communication model~\cite{GROPP1996789}.

In the examples involving the Poisson problem, we employ $\mathcal{Q}_1$ elements both for the background space $V_h$ and the immersed space $\Lambda_h$. For the Stokes test case, we employ the classical
Taylor-Hood $\mathcal{Q}_2$-$\mathcal{Q}_1$ stable pair for the velocity and pressure spaces $V_h$ and $Q_h$, and $\mathcal{Q}_1$ elements for the multiplier space $\Lambda_h$. In
the forthcoming Tables, we will denote by $|V_h|+|\Lambda_h|$ the total number of {\color{black}DoF}, which corresponds to the global size of the linear system in~\eqref{eqn:matrix_aug_p}. Similarly,
for the Stokes problem, we denote with $|V_h|+|Q_h|+|\Lambda_h|$ the total number of {\color{black}DoF} associated with the linear system in~\eqref{eqn:algebraic_form_stokes}. All tests are
performed using background meshes made of quadrilaterals or hexahedra and immersed boundary meshes made of segments and quadrilaterals. We perform an initial pre-processing of the background
grid $\Omega_h$ by applying a localized refinement around the interface, where most of the error is concentrated~\cite{Heltai2019}.
Sample two-dimensional grids resulting from this process in the two-dimensional case are shown in Figure~\ref{fig:interfaces}. A three-dimensional analogue is displayed in Figure~\ref{fig:torus_refined}.
{As we are using conforming elements on quadrilateral or hexahedral meshes for the background domain, local refinement procedures induce the presence of hanging nodes. As usual, the discrete solution
is enforced to be continuous through constraints on the nodal coefficients. In practice, the nodal unknowns associated with those nodes are eliminated by restricting their
value to an interpolation of the nodal values of the finite element unknown in its parent element. The interested reader is referred to~\cite{dealIIdesign} for the implementation details.} Finally, we investigate the iteration counts under simultaneous refinement of both the background and immersed meshes. All the solvers are started with the zero vector as initial guess. Throughout the experiments, we will set $\mathsf{Q=M_p}$, $\mathsf{W=M_\lambda^2}$, and we will consider
absolute tolerances $\mathrm{TOL}$ ranging from $10^{-8}$ to $10^{-10}$.

\subsection{Poisson problem}
We analyze different preconditioning techniques for the linear system~\eqref{eqn:algebraic_form}. In particular, we test three approaches which can be applied to our context: BFBt preconditioning~\cite{ElmanBFBt}, rational preconditioning~\cite{haznics}, and the AL-based preconditioner devised in Section~\ref{sec:AL_preconditioner}.\\
Unless stated otherwise, the resulting linear system~\eqref{eqn:algebraic_form} (or the equivalent augmented version~\eqref{eqn:matrix_aug_p}) is solved with FGMRES(30), preconditioned by each one of the preconditioners defined above. For
ease of presentation, we recall here the definition of the first two preconditioners. With the BFBt approach, the action of the preconditioner is defined as
$$\mathcal{P}_{\text{BFBt}}^{-1} =
  \begin{bmatrix}
    \mathsf{\widehat{A}} & \mathsf{C^T}          \\
    0                    & -\mathsf{\widehat{S}}
  \end{bmatrix}^{-1},$$where $\mathsf{\widehat{S}^{-1}\coloneqq(CC^T)^{-1}C{A}C^T(CC^T)^{-1}}.$
{\color{black}Similar to what was reported} in~\cite{ElmanBFBt}, the following set of experiments shows that convergence rates for the BFBt preconditioner are mesh-dependent, with iteration counts increasing in proportion to $h_{\Omega}^{-\frac{1}{2}}$. The action of $\mathsf{\widehat{A}^{-1}}$ is computed through a conjugate gradient solver, preconditioned by AMG, with inner tolerance set to $10^{-2}$.

When employing the rational-based preconditioner~\cite{haznics,trace1d2d}, its action on a vector is determined by the following inverse
$$\mathcal{P}_{\text{rational}}^{-1}\coloneqq
  \begin{bmatrix}
    \mathsf{\widehat{A}} & 0                    \\
    0                    & \mathsf{\widehat{S}}
  \end{bmatrix}^{-1}.$$
This approach is based on the observation that the Schur complement $\mathsf{S} = \mathsf{C A^{-1} C^T}$, in the context of matching interfaces, is spectrally equivalent to the
fractional Laplacian~\cite{trace1d2d}. The action of the {\color{black}inverse of} $\mathsf{\widehat{S}}$ on a vector $\mathsf{v}$ is computed
using a rational approximation and reads
$$\mathsf{\widehat{S}}^{-1}\mathsf{v} = c_0 \mathsf{M_\lambda^{-1}}\mathsf{v} + \sum_{i=1}^{N_p}c_i \bigl(\mathsf{A_{\lambda}} - \rho p_i \mathsf{M_\lambda}\bigr)^{-1}\mathsf{v},$$ where $c_i$ and $p_i$ are residues and poles, respectively, $\mathsf{A_\lambda}$ is the stiffness matrix on the immersed space $\Lambda_h$, and $\rho$ denotes the spectral
radius of $\mathsf{M_\lambda^{-1}A_\lambda}$. From the implementation standpoint, $\{p_i\}_{i=1}^{N_p}$ and $\{c_i\}_{i=0}^{N_p}$ are computed offline once and for all. In the present case, $N_p=20$. We also need an upper bound on $\rho(\mathsf{M_\lambda^{-1}A_\lambda})$, which we estimate as in~\cite{haznics} with
\begin{equation*}
  \rho(\mathsf{M_\lambda^{-1}A_\lambda}) \leq d(d+1) \|\text{diag}(\mathsf{M_\lambda})^{-1}\|_{\infty} \|\mathsf{A}\|_{\infty}.
\end{equation*}Moreover, as $p_i \in \mathbb{R}$, $p_i \leq 0$, the application of $\mathsf{\widehat{S}}^{-1}$ involves a series of $N_p$ elliptic
problems for which AMG is well-suited. The solution of every shifted linear system is realized using the CG method
with AMG as a preconditioner and a strict tolerance of $10^{-14}$. {\color{black}We observed that employing FGMRES as outer solver, with looser inner tolerances for these sub-systems, leads to high iteration counts in this case.} The inversion of $\mathsf{\widehat{A}^{-1}}$ in the $(1,1)$-block is done through the sparse direct solver~\textsc{UMFPACK}. Since the preconditioner is {\color{black}SPD}, the outer solver
associated with the rational preconditioner is chosen to be MINRES~\cite{PaigeSaunders}.

Concerning the AL preconditioner, we set $\gamma=10$ and a low inner tolerance of $10^{-2}$ for the augmented block $\mathsf{\widehat{A}_{\gamma}}$ which is always inverted through a
CG solver preconditioned by a single V-cycle of AMG. The inversion of $\mathsf{W}$ is performed through the sparse direct solver~\textsc{UMFPACK}. Moreover, the
absolute tolerance employed in these experiments is $\mathrm{TOL}\! =\!10^{-10}$. We notice that we have chosen a rather strict tolerance compared to other papers analyzing preconditioners, which
leads to higher iteration numbers. We have chosen this convergence criterion because it makes seeing trends easier due to the higher number of outer iterations.
When using AMG, we adopt the \text{TrilinosML} implementation~\cite{Trilinos}, {\color{black}with parameters shown in Table~\ref{tab:amg_params}, in Appendix~\ref{sec:app:AMG_params}.}

We fix $\Omega = [0, 1]^2$ as the background domain and consider {\color{black}various} immersed domains $\{\Gamma^{i}\}_{i=1,2,3}$ of different shapes originating from an unfitted discretization of different curves:
{\begin{itemize}
  \item $\Gamma^1 \coloneqq \partial B_r(\boldsymbol{c})$,
  \item $\Gamma^2 \coloneqq
          \Bigl\{
          \bigl(
          R + x_c + r \cos(\theta \pi x)\cos(2\pi x),\,
          R + y_c + r \cos(\theta \pi x)\sin(2\pi x)
          \bigr)
          \,\Big|\,
          x \in [0,1)
          \Bigr\}.$
  \item $\Gamma^3 \coloneqq \left\{ (x,y) \in \mathbb{R}^2: \min \left( x - a, b - x, y - a, b - y \right) = 0 \;\middle|\; (x,y) \in [0,1]^2 \right\}$,
\end{itemize}}
where the second curve is a { parametrization of a} flower-like interface and the last one is the boundary of the square $[a,b]^2$, assuming $a >0$ and $a<b<1$. In all the numerical results reported hereafter, we set $f=1$ and $g=1$ as data. Note that we have also tested various other data and observed the same trends and behavior for the three preconditioners considered. We point out that, in all experiments, the number of outer MINRES iterations
needed by the rational preconditioner with lower tolerances (such as $\mathrm{TOL}\! =\!10^{-6}$) is {\color{black}significantly} lower and agrees with the findings already reported in~\cite{haznics,trace1d2d}. The correctness of
our implementation can be verified in Table~\ref{tab:poisson_smooth_table} and Figure~\ref{fig:poisson_smooth_rates}, where optimal convergence rates against the mesh size $h_{\Omega}$ are reported for
the manufactured solution $u(x,y)\coloneqq \sin(2 \pi x)\sin(2 \pi y)$, which in turn implies $f\coloneqq8 \pi u^2(x,y)$. Notice that from the smoothness of $u$, it follows
that in this case the Lagrange multiplier {\color{black} is identically zero}. This corresponds to a special case where the interface is not truly an interface. Indeed, when an arbitrary value for $u$ is imposed on $\Gamma$, it cannot be expected that the solution is in $H^2(\Omega)$ since its gradient is not a continuous function across the interface. In such cases, it is known that non-matching methods are not able to recover the optimal rate of convergence~\cite{Heltai2019}. The chosen value for the absolute
tolerance $\mathrm{TOL}$ is comparable, if not stricter, to the possible accuracy obtained in the smooth case by the method.

\begin{figure}[H]
  \centering
  \subfloat[\centering]{{\includegraphics[width=7cm]{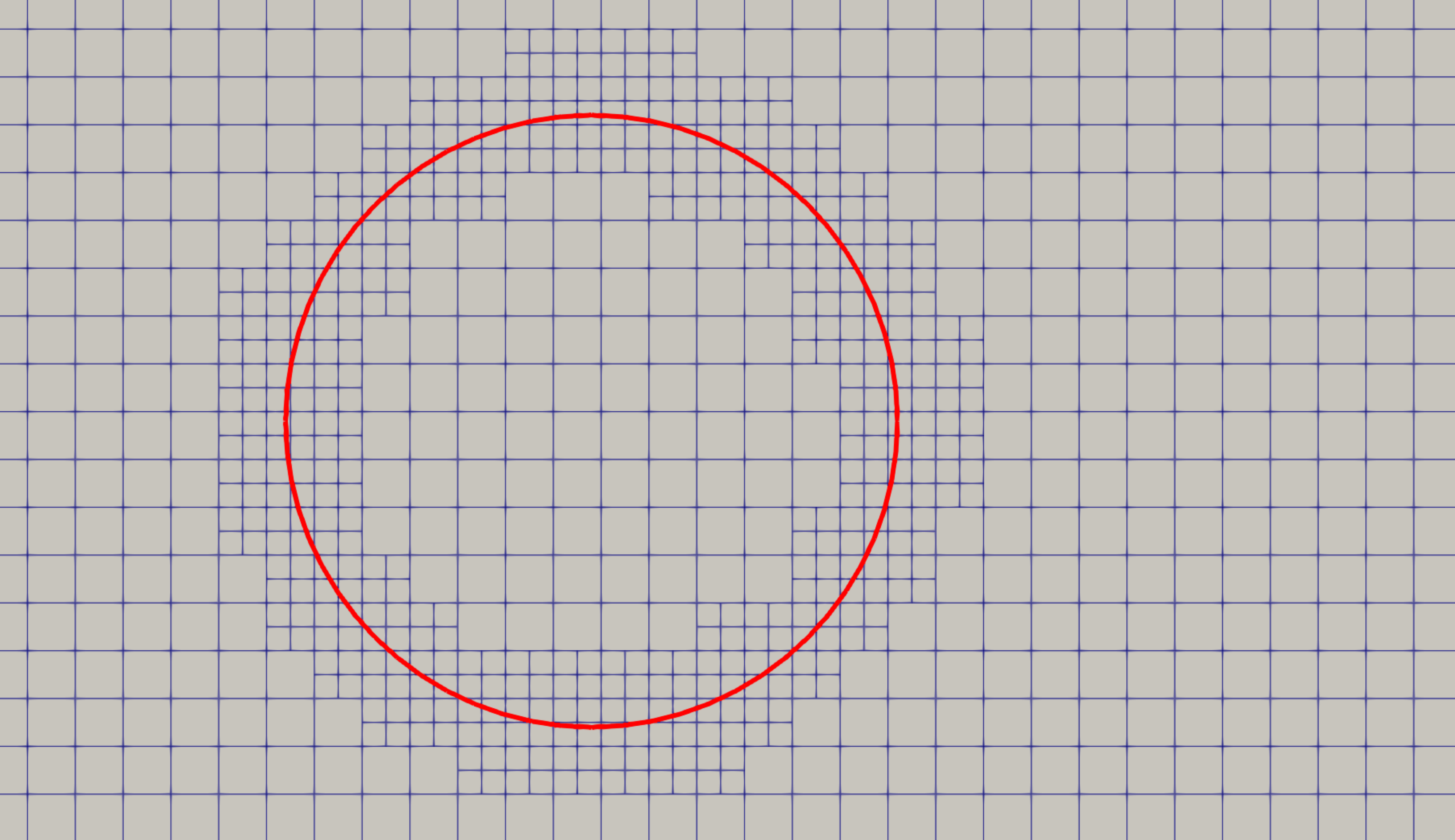}}}%
  \qquad
  \subfloat[\centering]{{\includegraphics[width=7cm]{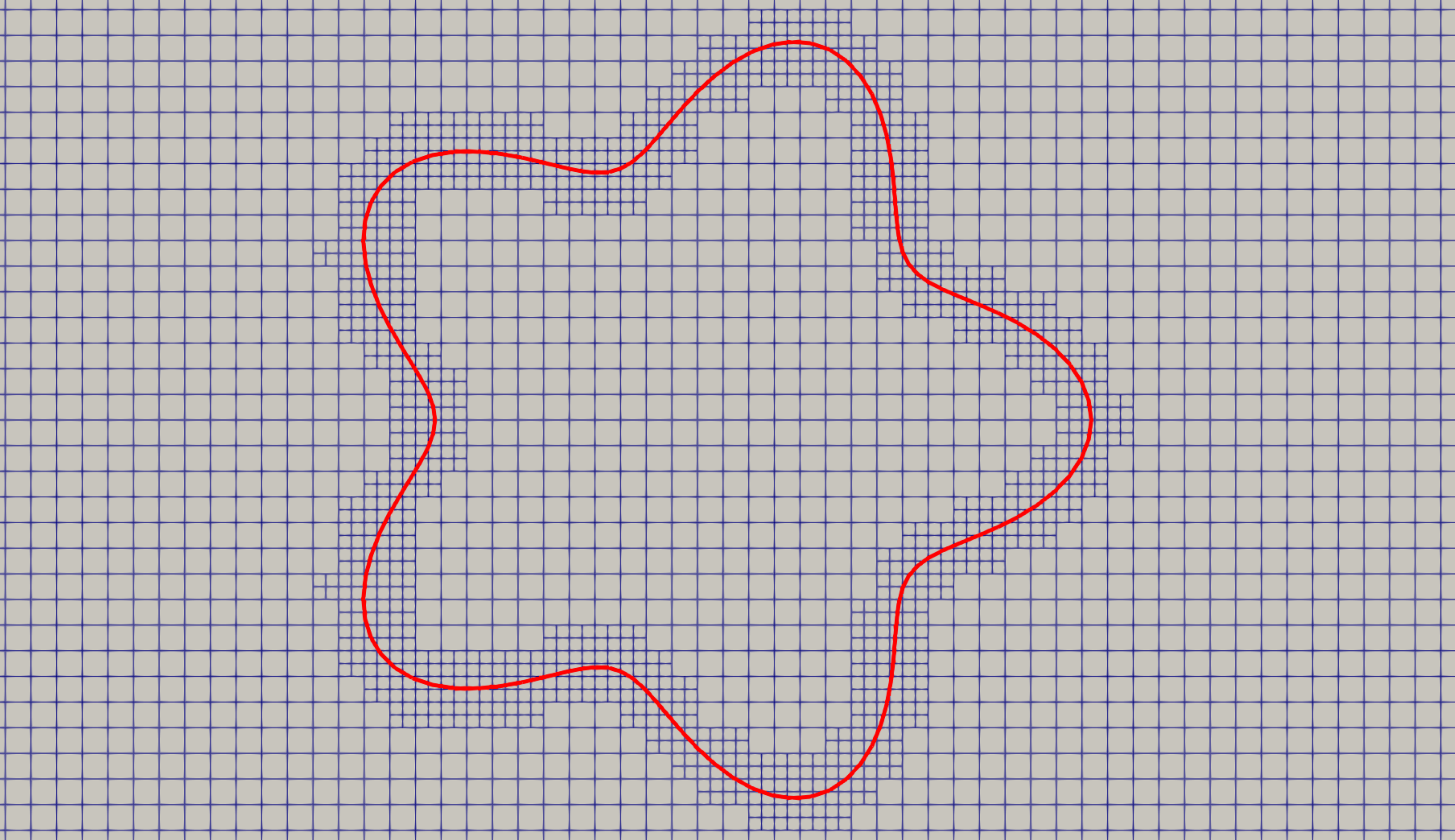} }}%
  \caption{Zoom on pre-processed background grid $\Omega_h$ for the circle interface $\Gamma = \Gamma^1$ (left) and the flower-shaped interface $\Gamma = \Gamma^2$ (right).}%
  \label{fig:interfaces}
\end{figure}

\begin{table}[H]
  \centering
  \begin{minipage}[t]{0.48\textwidth} %
    \centering
    \vspace{0pt} %
    \begin{tikzpicture}
      \begin{axis}[
          xlabel={$h_{\Omega}$},
          ylabel={Errors},
          grid=major,
          width=\textwidth, height=7cm,
          xmode=log,
          ymode=log,
          log basis x=10,
          ymax = 10, ymin = 0.000000001,
          legend pos=south east,
          legend style={font=\footnotesize}
        ]
        \addplot[blue, thick, mark=o]
        table[x expr=sqrt(1/\thisrowno{3}), y index=1, col sep=comma]  {data/smooth_solution-comma.csv};
        \addlegendentry{$L^2(\Omega)$ error}

        \addplot[red, thick, mark=square]
        table[x expr=sqrt(1/\thisrowno{3}), y index=2, col sep=comma]  {data/smooth_solution-comma.csv};
        \addlegendentry{$H^1(\Omega)$ error}

        \addplot[red, dashed, domain=0.0005:0.08] {x};
        \addlegendentry{$\mathcal{O}(h_\Omega)$}

        \addplot[blue, dashed, every mark/.append style={solid}, mark=+, domain=0.0005:0.08] {x^2};
        \addlegendentry{$\mathcal{O}(h_\Omega^2)$}

      \end{axis}
    \end{tikzpicture}
    
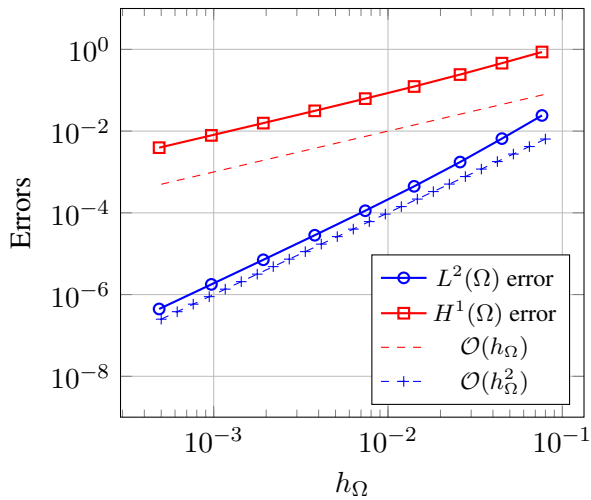
\captionof{figure}{$L^2(\Omega)$ and $H^1(\Omega)$ errors against $h_\Omega$ for interface $\Gamma^1$.}
    \label{fig:poisson_smooth_rates}
  \end{minipage}
  \hfill
  \begin{minipage}[t]{0.48\textwidth} %
    \centering
    \vspace{0pt} %
    \adjustbox{valign=t}{ %
      \pgfplotstabletypeset[
        col sep=tab,
        string type,
        header=false,
        columns={0,1,2},
        columns/0/.style={column name={DoF $\bigl(|V_h|+|\Lambda_h|\bigr)$}, column type=l||},
        columns/1/.style={column name=$L^2(\Omega)$ rate, column type=c},
        columns/2/.style={column name=$H^1(\Omega)$ rate, column type=c},
        every head row/.style={
            before row=\toprule
            \multicolumn{3}{c}{\textbf{Convergence history for $\Gamma^1$}} \\ \midrule,
            after row=\midrule
          },
        every last row/.style={after row=\bottomrule}
      ] {data/smooth_solution.csv}
    }
    \caption{$L^2(\Omega)$ and $H^1(\Omega)$ errors for $\Gamma^1$ across mesh refinement for the manufactured solution $u(x,y)\coloneqq \sin(2 \pi x) \sin(2 \pi y)$.}
    \label{tab:poisson_smooth_table}
  \end{minipage}
\end{table}

Tables~\ref{tab:poisson_circle}, ~\ref{tab:poisson_flower} and ~\ref{tab:poisson_square} present the outer iteration counts upon mesh-refinement for the Poisson problem~\eqref{eqn:algebraic_form} using
the three preconditioners outlined above. As interfaces we consider  the circular interface $\Gamma^1$, centered at $\boldsymbol{c} = (0.4, 0.4)$ with radius $r=0.2$, the flower-shaped interface $\Gamma^2$, where we have chosen $R=0.2$, $r=0.04$, $x_c=0.5$, $y_c=0.5$, $\theta=10$, and the square interface $\Gamma^3$ defined by parameters $a=0.25$ and $b=0.5$. The results demonstrate that the AL preconditioner consistently requires fewer
outer iterations compared to BFBt and rational approaches, highlighting its robustness and efficiency. As the number of DoF increases, the
iteration counts for the BFBt preconditioner show a noticeable increase, confirming the dependency on the mesh size. In contrast, the rational and AL preconditioners
exhibit robust iteration counts, with the AL preconditioner showing lower numbers. This can be better appreciated in Figures~\ref{fig:poisson_circle_it_plot},~\ref{fig:poisson_flower_it_plot} and~\ref{fig:poisson_square_it_plot}, where the iteration counts are plotted against
the mesh size of the background $h_{\Omega}$. For what concerns the inner solves for $\mathsf{\widehat{A}_{\gamma}}$ we observed a roughly constant and low number of
iteration counts across every refinement cycle, never exceeding {\color{black}nine} CG iterations for each outer iteration of FGMRES(30).
The low number of outer iterations required by the AL-based preconditioner (combined with the low number of inner ones) yields overall good solution times.
  {For the sequence of refinements performed in Table~\ref{tab:poisson_circle}, we measured
    the wall-clock time spent in the solver\footnote{On a 2.60GHz Intel Xeon processor, using a Release build with optimization flags enabled.} by all the preconditioners presented in this Section. The times are reported
    in Figure~\ref{fig:poisson_circle_times}. Compared with BFBt and the rational preconditioner, the AL-based approach exhibits lower times across all the refinement cycles. Similar trends were observed for all other interfaces, so additional figures and tables are omitted. Albeit a systematic comparison of the
    solvers is out of the scope of this paper, we argue that the observed times are representative.
  }

\begin{table}[H]
  \centering
  \begin{minipage}{0.48\textwidth} %
    \centering
    \begin{tikzpicture}
      \begin{axis}[
          xlabel={$h_{\Omega}$},
          ylabel={Iteration counts},
          grid=major,
          width=\textwidth, height=7cm,
          xmode=log,
          ymode=linear,
          log basis x=10,
          ymax = 110
        ]
        \addplot[blue, thick, mark=o]
        table[x expr=sqrt(1/\thisrowno{4}), y index=1, col sep=comma]  {data/laplace/iterations/circle/data.csv};
        \addlegendentry{BFBt}
        \addplot[green, thick, mark=square]
        table[x expr=sqrt(1/\thisrowno{4}), y index=2, col sep=comma]  {data/laplace/iterations/circle/data.csv};
        \addlegendentry{Rational}
        \addplot[magenta, thick, mark=triangle]
        table[x expr=sqrt(1/\thisrowno{4}), y index=3, col sep=comma]  {data/laplace/iterations/circle/data.csv};
        \addlegendentry{AL}

      \end{axis}
    \end{tikzpicture}
    \captionof{figure}{Iteration counts vs. $h_{\Omega}$ for  $\Gamma^1$.}
    \label{fig:poisson_circle_it_plot}
  \end{minipage}
  \hfill
  \begin{minipage}{0.48\textwidth} %
    \centering
    \pgfplotstabletypeset[
      col sep=comma,
      string type,
      header=false,
      columns={0,1,2,3},
      columns/0/.style={column name={DoF $\bigl(|V_h|+|\Lambda_h|\bigr)$}, column type=l||},
      columns/1/.style={column name=BFBt, column type=c},
      columns/2/.style={column name=Rational, column type=c},
      columns/3/.style={column name=AL, column type=c},
      every head row/.style={
          before row=\toprule
          \multicolumn{4}{c}{\textbf{Outer iteration counts for $\Gamma^1$}} \\ \midrule,
          after row=\midrule
        },
      every last row/.style={after row=\bottomrule}
    ]{data/laplace/iterations/circle/data.csv}
    \caption{Outer iteration counts for the Poisson problem with circular interface $\Gamma^1$.}
    \label{tab:poisson_circle}
  \end{minipage}
\end{table}

\begin{table}[H]
  \centering
  \begin{minipage}{0.48\textwidth} %
    \centering
    \begin{tikzpicture}
      \begin{axis}[
          xlabel={$h_{\Omega}$},
          ylabel={Iteration counts},
          grid=major,
          width=\textwidth, height=7cm,
          xmode=log,
          ymode=linear,
          log basis x=10,
          ymax = 130
        ]
        \addplot[blue, thick, mark=o]
        table[x expr=sqrt(1/\thisrowno{4}), y index=1, col sep=comma]  {data/laplace/iterations/flower/data.csv};
        \addlegendentry{BFBt}
        \addplot[green, thick, mark=square]
        table[x expr=sqrt(1/\thisrowno{4}), y index=2, col sep=comma]  {data/laplace/iterations/flower/data.csv};
        \addlegendentry{Rational}
        \addplot[magenta, thick, mark=triangle]
        table[x expr=sqrt(1/\thisrowno{4}), y index=3, col sep=comma]  {data/laplace/iterations/flower/data.csv};
        \addlegendentry{AL}

      \end{axis}
    \end{tikzpicture}
    \captionof{figure}{Iteration counts vs. $h_{\Omega}$ for $\Gamma^2$.}
    \label{fig:poisson_flower_it_plot}
  \end{minipage}
  \hfill
  \begin{minipage}{0.48\textwidth} %
    \centering
    \pgfplotstabletypeset[
      col sep=comma,
      string type,
      header=false,
      columns={0,1,2,3},
      columns/0/.style={column name={DoF $\bigl(|V_h|+|\Lambda_h|\bigr)$}, column type=l||},
      columns/1/.style={column name=BFBt, column type=c},
      columns/2/.style={column name=Rational, column type=c},
      columns/3/.style={column name=AL, column type=c},
      every head row/.style={
          before row=\toprule
          \multicolumn{4}{c}{\textbf{Outer iteration counts for $\Gamma^2$}} \\ \midrule,
          after row=\midrule
        },
      every last row/.style={after row=\bottomrule}
    ]{data/laplace/iterations/flower/data.csv}
    \caption{Outer iteration counts for the Poisson problem with flower-shaped interface $\Gamma^2$.}
    \label{tab:poisson_flower}
  \end{minipage}
\end{table}

\begin{table}[H]
  \centering
  \begin{minipage}{0.48\textwidth}
    \centering
    \begin{tikzpicture}
      \begin{axis}[
          xlabel={$h_{\Omega}$},
          ylabel={Iteration counts},
          grid=major,
          width=\textwidth, height=7cm,
          xmode=log,
          ymode=linear,
          log basis x=10,
          ymax = 100
        ]
        \addplot[blue, thick, mark=o]
        table[x expr=sqrt(1/\thisrowno{4}), y index=1, col sep=comma]  {data/laplace/iterations/square/data.csv};
        \addlegendentry{BFBt}
        \addplot[green, thick, mark=square]
        table[x expr=sqrt(1/\thisrowno{4}), y index=2, col sep=comma]  {data/laplace/iterations/square/data.csv};
        \addlegendentry{Rational}
        \addplot[magenta, thick, mark=triangle]
        table[x expr=sqrt(1/\thisrowno{4}), y index=3, col sep=comma]  {data/laplace/iterations/square/data.csv};
        \addlegendentry{AL}

      \end{axis}
    \end{tikzpicture}
    \captionof{figure}{Iteration counts vs. $h_{\Omega}$ for $\Gamma^3$.}
    \label{fig:poisson_square_it_plot}
  \end{minipage}
  \hfill
  \begin{minipage}{0.48\textwidth} %
    \centering
    \pgfplotstabletypeset[
      col sep=comma,
      string type,
      header=false,
      columns={0,1,2,3},
      columns/0/.style={column name={DoF $\bigl(|V_h|+|\Lambda_h|\bigr)$}, column type=l||},
      columns/1/.style={column name=BFBt, column type=c},
      columns/2/.style={column name=Rational, column type=c},
      columns/3/.style={column name=AL, column type=c},
      every head row/.style={
          before row=\toprule
          \multicolumn{4}{c}{\textbf{Outer iteration counts for $\Gamma^3$}} \\ \midrule,
          after row=\midrule
        },
      every last row/.style={after row=\bottomrule}
    ]{data/laplace/iterations/square/data.csv}
    \caption{Outer iteration counts for the Poisson problem with square interface $\Gamma^3$.}
    \label{tab:poisson_square}
  \end{minipage}
\end{table}

{
\begin{figure}[H]
  \centering
  \begin{tikzpicture}
    \begin{axis}[
        width=12cm, height=6cm,
        xlabel={DoF $\bigl(|V_h|+|\Lambda_h|\bigr)$},
        ylabel={Wall-clock time [s]},
        legend pos=north west,
        grid=major,
        ymode=log,
        xmode=log,
        log basis x=10,
        log basis y=10,
        tick label style={font=\small},
        label style={font=\small},
        legend style={font=\small}
      ]
      \addplot[blue, thick, mark=o]
      table[x index=0, y index=1, col sep=comma] {data/laplace/iterations/circle/times.csv};
      \addlegendentry{BFBt}
      \addplot[green, thick, mark=square*]
      table[x index=0, y index=2, col sep=comma] {data/laplace/iterations/circle/times.csv};
      \addlegendentry{Rational}
      \addplot[magenta, thick, mark=triangle*]
      table[x index=0, y index=3, col sep=comma] {data/laplace/iterations/circle/times.csv};
      \addlegendentry{AL}
      \addplot[black, dashed, very thick, domain=1e3:.5*1e7, samples=2] {0.000003*x};
      \addlegendentry{$\mathcal{O}(\# \text{DoF})$}
    \end{axis}
  \end{tikzpicture}
  \caption{Wall-clock time (in seconds) for the Poisson problem with circular interface $\Gamma^1$ using different preconditioners, as a function of the total number of DoF.}
  \label{fig:poisson_circle_times}
\end{figure}}

\subsection{Stokes problem}\label{subsec:stokes}
We analyze the application to the Stokes problem of the AL preconditioner $\mathcal{P_{ \gamma \delta}}$ developed {\color{black}in} Section~\ref{sec:stokes_problem}. The background domain is fixed to be $\Omega=[0,1]^2$, and the following configuration
is considered:
\begin{equation*}
  \begin{aligned}
    \mathbf{f} \coloneqq
    [
      1,
      0
    ]^T, \quad
    \mathbf{g} \coloneqq
    [
      -0.5,
      0.5
    ]^T,\quad
    \Gamma^{4}\coloneqq\partial \mathcal{B}_{r}(\boldsymbol{c}),
  \end{aligned}
\end{equation*}where $\boldsymbol{c}=(0.45,0.45)$ and $r=0.21$. Notice that $\mathbf{g}$ has been chosen in
such a way that the compatibility condition~\eqref{eqn:constraint_incompressibility} is automatically satisfied. Concerning the AL parameters, we have set
$\gamma\! =\! \delta \! =\!10$. In this two-dimensional example, the inversion of the mass matrices $\mathsf{Q}$ and $\mathsf{W}$ is performed through the sparse direct solver~\textsc{UMFPACK}, as we
are still in a regime for which the factorization of the pressure mass matrix $\mathsf{Q}$ is affordable. Notice that $\mathsf{W}$ is a 1D mass matrix, so its explicit inversion
with a sparse direct solver does not constitute a possible bottleneck in this example, even when the cardinality of $V_h$ is large. The augmented term is never factorized, but is always inverted through a
CG solver preconditioned by a single V-cycle of AMG, with a loose tolerance of $10^{-2}$.
The iteration counts for the outer FGMRES(30) solver are reported in Table~\ref{tab:stokes_2D_direct}, corresponding to an
absolute tolerance of $\mathrm{TOL}\! =\!10^{-8}$. The AL preconditioner
shows a robust behavior, requiring low FGMRES(30) iteration {\color{black}counts} which are independent of the refinement levels.
  {We display the inner iterations
    as the average over all outer FGMRES(30) iterations of the number of iterations for the augmented velocity block $\mathsf{\widehat{A}_{\gamma}}$.} From there
it can be seen that the number of inner iterations slightly increases upon mesh refinement, but remains essentially bounded. {The development of a
    $\gamma,\delta$-robust and mesh-independent solver for our double-augmented block $\mathsf{\widehat{A}_{\gamma\delta}}$ is far from trivial and is left for future work. Ad-hoc geometric multigrid schemes have been used and developed in~\cite{ALprec,Farrell2019} in the context of Oseen and Stokes problems, respectively, to overcome the issues introduced by the augmentation.}

Having confirmed the good behavior of the preconditioner with direct solvers, we test its performance
when the action of $\mathsf{Q^{-1}}$ and $\mathsf{W^{-1}}$ is {\color{black}applied} inexactly. The inversion of $\mathsf{Q}$ is performed with a CG solver
preconditioned by the (lumped) pressure mass matrix ${\mathsf{\widehat{M}_p}}$, while for the matrix $\mathsf{W}$ we directly invert its diagonal approximation (see Eq.~\eqref{eqn:W_approx}) . We show in Table~\ref{tab:stokes_2D_iterative} the results with this approach. The usage of iterative
solvers also for the mass matrices does not spoil the overall quality of the preconditioner. We observe only a marginal increase in the number of outer iterations compared to
the case where exact inversions are performed.
However, the overall numbers are robust upon mesh refinement, with a considerable \color{black}saving in computational cost associated with the fact that no direct inversion of any block has been performed.
The computed velocity field $\mathbf{u}$ and its streamlines are shown in Figure~\ref{fig:stokes_IB_2D}. {Finally, we test the performance of the block triangular preconditioner $\mathcal{P}_{\gamma \delta}$ defined in~\eqref{eqn:prec_AL_stokes} against the diagonal and SPD variant $\mathcal{P_{\text{sym}, \gamma \delta}}$ presented in Remark~\ref{rmk:SPD_variant}, which is used as a preconditioner for MINRES. The computing times for
the solver are reported in Figure~\ref{fig:stokes_circle_times}, where it
can be appreciated how the block triangular preconditioner $\mathcal{P}_{\gamma \delta}$ outperforms the diagonal variant by approximately an order of magnitude.}

{
\begin{figure}[H]
  \centering
  \begin{tikzpicture}
    \begin{axis}[
        width=12cm, height=5.8cm,
        xlabel={DoF $\bigl(|V_h|+|Q_h|+|\Lambda_h|\bigr)$},
        ylabel={Wall-clock time [s]},
        legend pos=north west,
        grid=major,
        ymode=log,
        xmode=log,
        log basis x=10,
        log basis y=10,
        tick label style={font=\small},
        label style={font=\small},
        legend style={font=\small}
      ]
      \addplot[blue, thick, mark=o]
      table[x index=0, y index=1, col sep=comma] {data/stokes/iterations/times_circle.csv};
      \addlegendentry{$\mathcal{P_{\text{sym}, \gamma \delta}}$}
      \addplot[magenta, thick, mark=triangle*]
      table[x index=0, y index=2, col sep=comma] {data/stokes/iterations/times_circle.csv};
      \addlegendentry{$\mathcal{P_{\gamma \delta}}$}
      \addplot[black, dashed, very thick, domain=1e3:1e7, samples=2] {0.00002*x};
      \addlegendentry{$\mathcal{O}(\# \text{DoF})$}
    \end{axis}
  \end{tikzpicture}
  \caption{Wall-clock time (in seconds) for the Stokes problem with circular interface $\Gamma^4$ using the block-triangular AL preconditioner $\mathcal{P_{\gamma \delta}}$ and its SPD variant $\mathcal{P_{\text{sym}, \gamma \delta}}$, as a function of the total number of DoF.}
  \label{fig:stokes_circle_times}
\end{figure}}

\begin{figure}[H]%
  \centering
  \begin{subfigure}[b]{0.48\textwidth}
    \centering
    \includegraphics[width=0.85\textwidth]{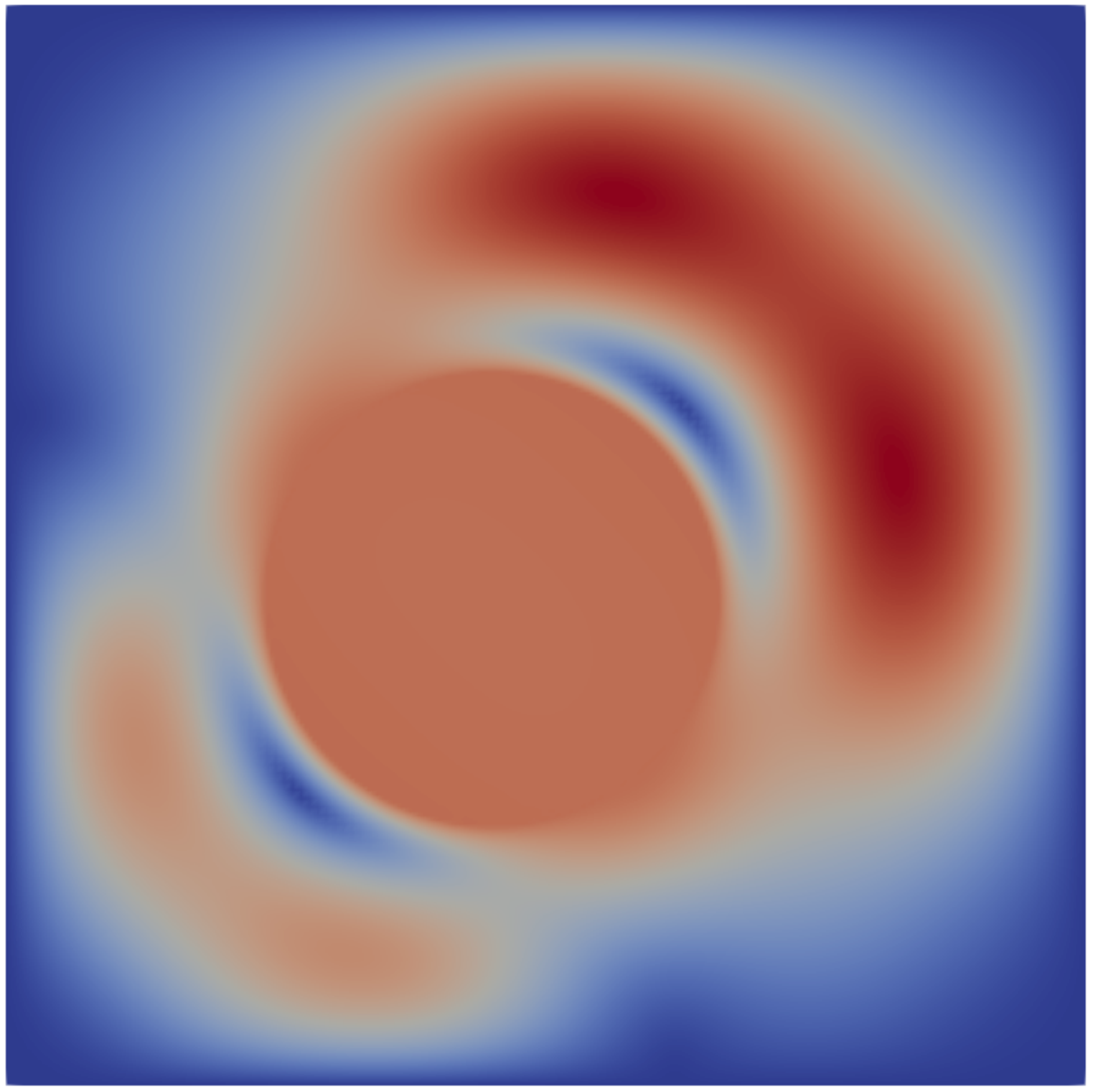}
    \caption{Magnitude of the velocity field $\mathbf{u}$.}
  \end{subfigure}
  \hfill
  \begin{subfigure}[b]{0.48\textwidth}
    \centering
    \includegraphics[width=0.9\textwidth]{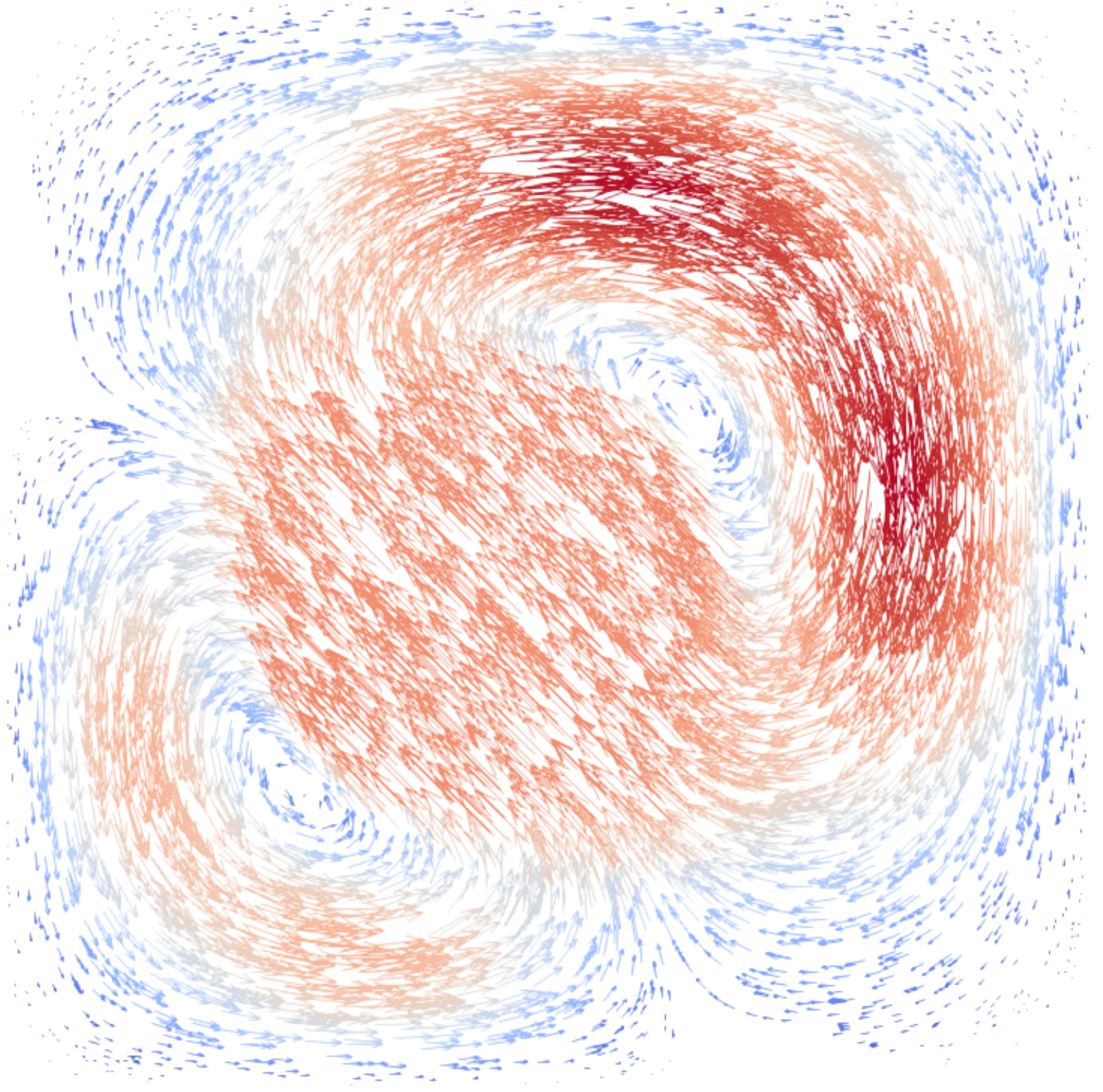}
    \caption{Vector field of the velocity $\mathbf{u}$.}
  \end{subfigure}
  \vspace{0.5cm} %
  \centering
  \begin{tikzpicture}
    \begin{axis}[
        hide axis,
        scale only axis,
        height=0pt,
        width=0pt,
        colormap={Diverging}{%
            rgb(0cm)=(0.231373, 0.298039, 0.752941);   %
            rgb(0.214686cm)=(0.513725,0.654902,0.988235); %
            rgb(0.502917cm)=(0.894118,0.85098,0.819608); %
            rgb(0.706993cm)=(0.941176,0.54902,0.435294); %
            rgb(0.908cm)=(0.705882,0.0156863, 0.14902)    %
          },
        colorbar horizontal,
        point meta min=0,
        point meta max=1,
        colorbar style={
            width=6cm,
            xtick={0,0.5,1.}
          }]
      \addplot [draw=none] coordinates {(0,1)}; %
    \end{axis}
  \end{tikzpicture}
  \vspace{-0.8cm} %
  \caption{Computed vector field $\mathbf{u}$ for the Stokes problem with an embedded circular interface.}
  \label{fig:stokes_IB_2D}
\end{figure}

\FloatBarrier
\subsubsection{Three-dimensional tests}
In order to show the reliability of our preconditioner also for practical usage, we finally consider the three-dimensional extension of the tests in the previous paragraph. The
following embedded surfaces are tested:
\begin{itemize}
  \item $\Gamma^{5} \coloneqq \partial B_r(\boldsymbol{c})$,
  \item $\Gamma^{6} \coloneqq \Bigl\{ (x,y,z) \in \mathbb{R}^3: \left(\sqrt{x^2 + y^2} - 0.3\right)^2 + z^2 - (0.1)^2 = 0\Bigr\},$
\end{itemize}where $\boldsymbol{c}=(\frac{1}{2},\frac{1}{2},\frac{1}{2})$ and $r=0.1$. The interface $\Gamma^6$ describes
a torus with inner and outer radius of {$0.2$ and $0.4$}. As data of the problem, we consider
\begin{equation*}
  \begin{aligned}
    \mathbf{f} \coloneqq
    [
      1,
      0,
      0
    ]^T, \quad
    \mathbf{g} \coloneqq
    [
      -1,
      1,
      0
    ]^T.
  \end{aligned}
\end{equation*}A sample grid illustrating the (local) refinement process near the embedded toroidal surface mesh $\Gamma^6$ is shown in Figure~\ref{fig:torus_refined}, while Figure~\ref{fig:tubes_torus} displays
the streamlines for the computed velocity field $\mathbf{u}$.
The AL parameters are set to $\gamma\! =\! \delta \! =\! 10$. The $(1,1)$-block is solved as in the previous tests with a tolerance of $10^{-2}$. The mass matrices $\mathsf{Q}$ and $\mathsf{W}$ are
inverted iteratively in the same way as explained in the previous two-dimensional test.
The results of this study are reported in Tables~\ref{tab:stokes_3D_sphere} and~\ref{tab:stokes_3D_torus}. For both the interfaces, we observe
outer iteration counts independent of the refinement level. Moreover, the order of magnitude of the inner AMG iterations for the inversion of the $(1,1)$-block is the same as in the two-dimensional tests. We solve problems
up to $56$ million of unknowns, thereby confirming the trends observed so far.
The three-dimensional simulations have been run varying the number of MPI processes employed from $16$ up to $256$ and observing robust inner and
outer iteration counts. Notably, the largest simulations in Tables~\ref{tab:stokes_3D_sphere} and~\ref{tab:stokes_3D_torus} must be run with a sufficiently large number of cores due to the high memory cost associated
with the setup of the AMG preconditioner for the augmented block and the storage of sparse matrices arising from higher-order discretizations. We show in Figure~\ref{fig:strong_scaling} a strong scaling experiment for different
components of our pipeline, where we keep a fixed problem size resulting from a three-dimensional discretization comprising roughly $7$ million DoF and increase incrementally the number of MPI processes from $16$ to $512$. In particular, we measure the
wall-clock time to assemble matrices $\mathsf{A}$ and $\mathsf{B}$ for the Stokes problem, the time to assemble the coupling matrix $\mathsf{C}$, and the time taken by FGMRES(30) to solve the final linear system of equations. The parallel experiments have been performed on the Galileo100\footnote{For the technical specifications, see \url{https://www.hpc.cineca.it/systems/hardware/galileo100/}, retrieved on February 21, 2025.} Italian supercomputer. Its compute nodes have two sockets (each one with 24 cores of Intel CascadeLake). Good scaling properties are confirmed for all the kernels employed in the simulation.
\begin{figure}[H]
  \centering
  \begin{subfigure}[b]{0.48\textwidth}
    \centering
    \includegraphics[width=1\textwidth]{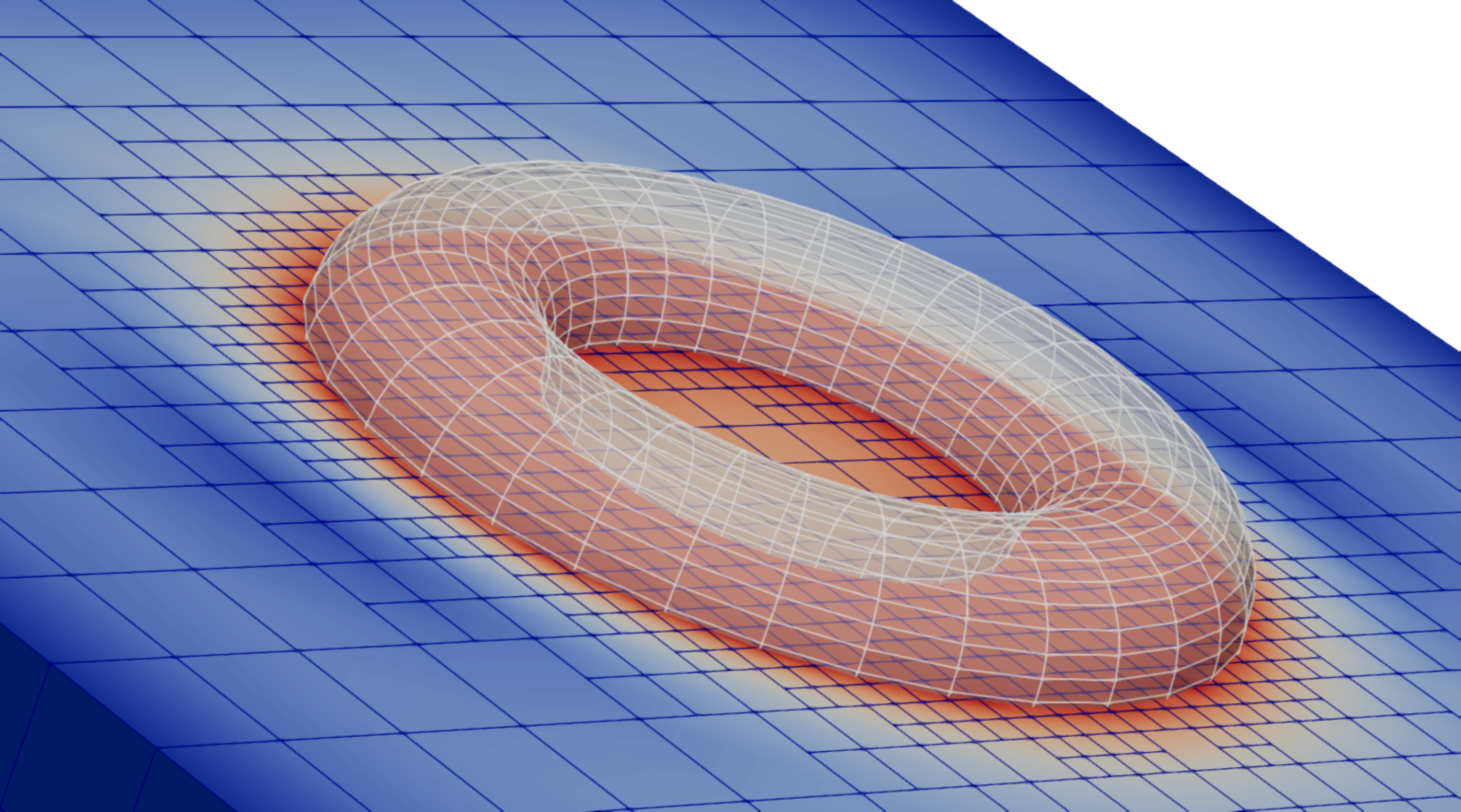}
    \caption{Locally refined background grid $\Omega_h$ around the toroidal interface $\Gamma^6$.}
    \label{fig:torus_refined}
  \end{subfigure}
  \hfill
  \begin{subfigure}[b]{0.46\textwidth}
    \centering
    \includegraphics[width=1.0\textwidth]{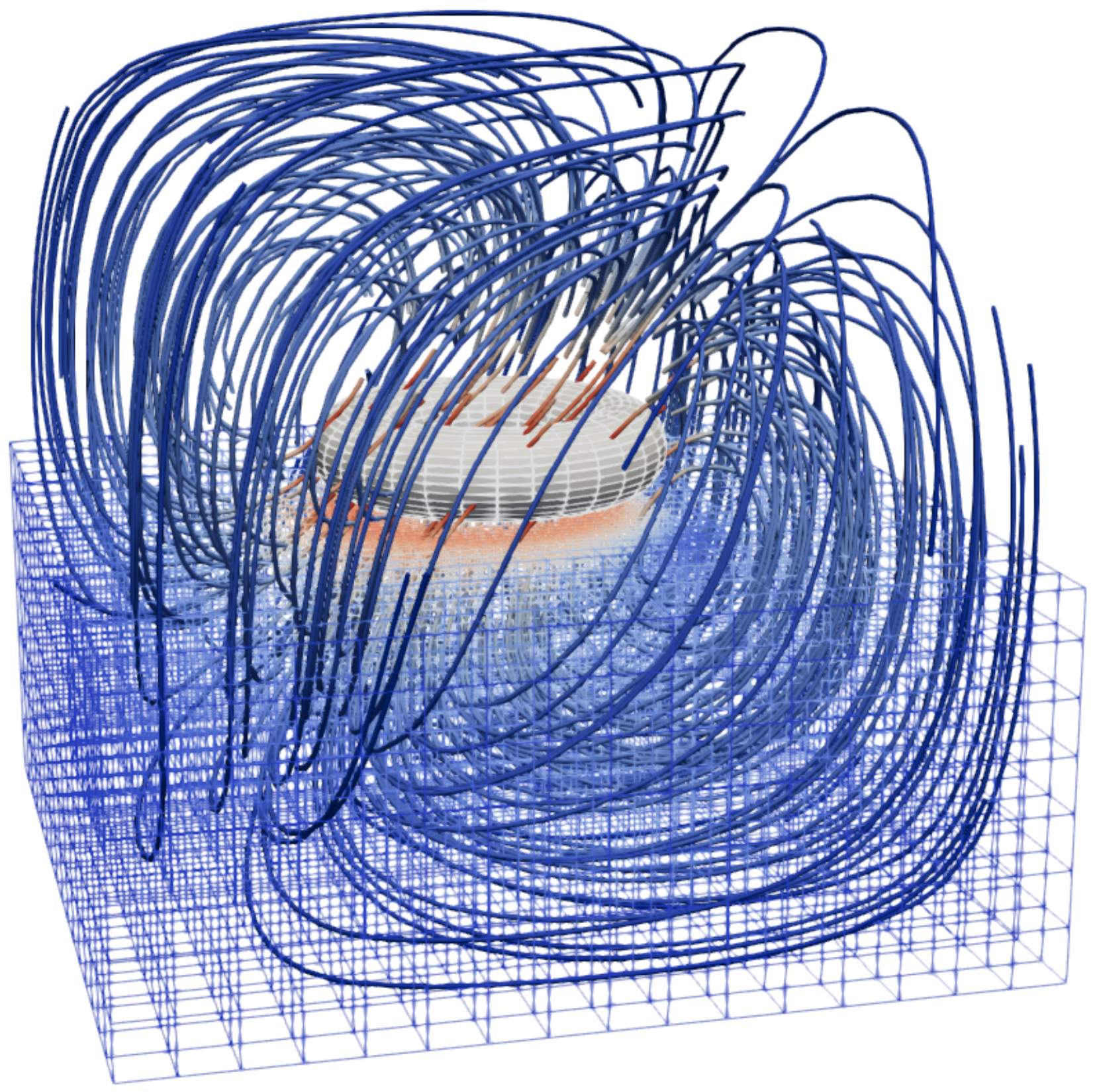}
    \caption{Streamlines associated to the velocity field $\mathbf{u}$ with interface $\Gamma^6$.}
    \label{fig:tubes_torus}
  \end{subfigure}
  \centering
  \begin{tikzpicture}
    \begin{axis}[
        hide axis,
        scale only axis,
        height=0pt,
        width=0pt,
        colormap={Diverging}{%
            rgb(0cm)=(0.231373, 0.298039, 0.752941);   %
            rgb(0.214686cm)=(0.513725,0.654902,0.988235); %
            rgb(0.502917cm)=(0.894118,0.85098,0.819608); %
            rgb(0.706993cm)=(0.941176,0.54902,0.435294); %
            rgb(1.6cm)=(0.705882,0.0156863, 0.14902)    %
          },
        colorbar horizontal,
        point meta min=0,
        point meta max=1.6,
        colorbar style={
            width=6cm,
            xtick={0,0.5,1.0,1.6}
          }]
      \addplot [draw=none] coordinates {(0,1.6)}; %
    \end{axis}
  \end{tikzpicture}
  \caption{Post-processing of the velocity field $\mathbf{u}$ with interface $\Gamma^6$. On the right, the background geometry
    is shown with a wireframe (and clipped) representation.}
\end{figure}

\begin{figure}[H]
  \centering
  \begin{tikzpicture}
    \begin{loglogaxis}[
        width=12cm, height=7cm,
        xlabel={Number of MPI ranks},
        ylabel={Wall-clock time [s]},
        title={Strong scaling experiment},
        grid=both,
        legend pos=north east,
        xtick=data,
        xticklabels={16, 32, 64, 96, 128, 192, 256, 512},
        ymajorgrids=true,
        xmajorgrids=true,
        minor grid style={dashed,gray!30},
        major grid style={dashed,gray!50},
        ymax = 2e3
      ]

      \addplot[color=blue, mark=o, thick] coordinates {
          (16,344.86) (32,195.145) (64,96.8739) (96,64.2033) (128,40.380)
          (192,26.33) (256,31.5158) (512,19.8516)
        };
      \addlegendentry{FGMRES(30)}

      \addplot[color=red, mark=x, thick] coordinates {
          (16,56.2) (32,26.1) (64,14) (96,8.89) (128,7)
          (192,4.53) (256,3.5) (512,1.76)
        };
      \addlegendentry{Assemble system}

      \addplot[color=green, mark=square*, thick] coordinates {
          (16,4.74) (32,2.09) (64,1.29) (96,1.14) (128,0.808)
          (192,0.759) (256,0.525) (512,0.361)
        };
      \addlegendentry{Assemble coupling}

      \addplot[color=black, thick, domain=16:512, samples=8]
      {.5*344.86 * (16/x)};
      \addlegendentry{Ideal Scaling}

    \end{loglogaxis}
  \end{tikzpicture}
  \caption{Strong scaling test of the principal components of the whole solver for a three-dimensional problem with interface $\Gamma^6$ with $7$ million DoF.}
  \label{fig:strong_scaling}
\end{figure}
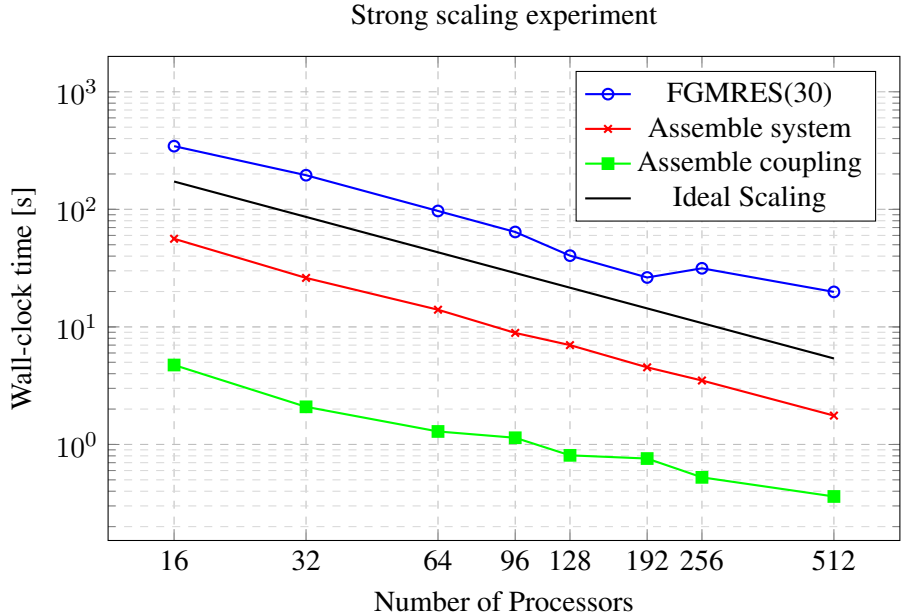

\begin{table}[h]
  \centering
  \begin{minipage}{0.48\linewidth}
    \centering
    \pgfplotstabletypeset[
      col sep=comma,
      string type,
      header=false,
      columns={0,6,8},
      columns/0/.style={column name={DoF $\bigl(|V_h|+|Q_h|+|\Lambda_h|\bigr)$}, column type=l||},
      columns/6/.style={column name=Outer, column type=c},
      columns/8/.style={column name=Inner, column type=c},
      every head row/.style={
          before row=\toprule
          \multicolumn{3}{c}{\textbf{Iteration counts for $\Gamma^4$ (direct solvers)}} \\ \midrule,
          after row=\midrule
        },
      every last row/.style={after row=\bottomrule}
    ]{data/stokes/iterations/data.csv}
    \subcaption{Circular interface $\Gamma^4$ (2D test).}
    \label{tab:stokes_2D_direct}
  \end{minipage}%
  \hspace{0.04\linewidth}%
  \begin{minipage}{0.48\linewidth}
    \centering
    \pgfplotstabletypeset[
      col sep=comma,
      string type,
      header=false,
      columns={0,6,8},
      columns/0/.style={column name={DoF $\bigl(|V_h|+|Q_h|+|\Lambda_h|\bigr)$}, column type=l||},
      columns/6/.style={column name=Outer, column type=c},
      columns/8/.style={column name=Inner, column type=c},
      every head row/.style={
          before row=\toprule
          \multicolumn{3}{c}{\textbf{Iteration counts for $\Gamma^4$ (inexact solvers)}} \\ \midrule,
          after row=\midrule
        },
      every last row/.style={after row=\bottomrule}
    ]{data/stokes/iterations/data_iterative.csv}
    \subcaption{Circular interface $\Gamma^4$ (2D test).}
    \label{tab:stokes_2D_iterative}
  \end{minipage}

  \vspace{3ex} %

  \begin{minipage}{0.48\linewidth}
    \centering
    \pgfplotstabletypeset[
      col sep=comma,
      string type,
      header=false,
      columns={0,1,4},
      columns/0/.style={column name={DoF $\bigl(|V_h|+|Q_h|+|\Lambda_h|\bigr)$}, column type=l||},
      columns/1/.style={column name=Outer, column type=c},
      columns/4/.style={column name=Inner, column type=c},
      every head row/.style={
          before row=\toprule
          \multicolumn{3}{c}{\textbf{Iteration counts for $\Gamma^5$}} \\ \midrule,
          after row=\midrule
        },
      every last row/.style={after row=\bottomrule}
    ]{data/stokes/iterations/data_sphere.csv}
    \subcaption{Spherical interface $\Gamma^5$ (3D test).}
    \label{tab:stokes_3D_sphere}
  \end{minipage}%
  \hspace{0.04\linewidth}%
  \begin{minipage}{0.48\linewidth}
    \centering
    \pgfplotstabletypeset[
      col sep=comma,
      string type,
      header=false,
      columns={0,1,4},
      columns/0/.style={column name={DoF $\bigl(|V_h|+|Q_h|+|\Lambda_h|\bigr)$}, column type=l||},
      columns/1/.style={column name=Outer, column type=c},
      columns/4/.style={column name=Inner, column type=c},
      every head row/.style={
          before row=\toprule
          \multicolumn{3}{c}{\textbf{Iteration counts for $\Gamma^6$}} \\ \midrule,
          after row=\midrule
        },
      every last row/.style={after row=\bottomrule}
    ]{data/stokes/iterations/data_torus.csv}
    \subcaption{Toroidal interface $\Gamma^6$ (3D test).}
    \label{tab:stokes_3D_torus}
  \end{minipage}

  \caption{Outer FGMRES(30) iteration counts with an absolute tolerance $\mathrm{TOL}\!=\!10^{-8}$, along with the
  corresponding { number of average} inner iterations for the Stokes problem using AL preconditioner across different geometries and solvers.}
  \label{tab:iteration_counts}
\end{table}

\section{Conclusions}\label{sec:conclusions}
We have presented augmented Lagrangian-based preconditioners to accelerate the convergence of iterative solvers
for linear systems arising from the finite element method applied to the {fictitious domain approach}. We have considered two model problems with an internal interface, namely the Poisson problem and the Stokes problem, which yield two-by-two and three-by-three block systems, respectively. A spectral analysis of
the preconditioner for the Stokes problem has been performed in both the exact and the inexact case. We tested practical and cheap variants of the proposed preconditioner that do not require the usage of sparse direct solvers, showing its effectiveness
through several numerical tests in two and three dimensions and with different immersed geometries. Additionally, the memory-distributed implementation was validated in a three-dimensional context. Future work will focus on
extending such techniques to more complex systems arising from {fictitious domain methodologies}, such as elliptic interface problems, which are currently under investigation, { and the development of tailored solution
        strategies for the inversion of the augmented block.}

\section*{Acknowledgments}
 {\color{black}MB acknowledges partial support from grant MUR PRIN 2022 No. 20227PCCKZ (``Low Rank Structures and Numerical Methods in Matrix and Tensor Computations and their Applications``).} LH and MF acknowledge partial support from grant MUR PRIN 2022 No. 2022WKWZA8 (“Immersed methods for multiscale and multiphysics problems (IMMEDIATE)”), and the support of the European Research Council (ERC) under the European Union's Horizon 2020 research and innovation programme (call HORIZON-EUROHPC-JU-2023-COE-03, grant agreement No. 101172493 ``dealii-X'').
The authors are members of Gruppo Nazionale per il Calcolo Scientifico (GNCS) of Istituto Nazionale di Alta Matematica (INdAM).

\appendix
\section{Appendix: Spectral equivalence of $h$-scaled mass matrices}\label{sec:app:spectral_h_mass}

\renewcommand{\theequation}{A.\arabic{equation}}
\setcounter{equation}{0}
We prove the spectral equivalence between the matrices $(h_\Gamma \mathsf{M_\lambda)^{-1}}$ and $\mathsf{M_\lambda^{-2}}$ {\color{black} when $d=2$. In this case, the mass matrix $\mathsf{M_{\lambda}}$ is defined on the one-dimensional (closed) curve $\Gamma_h$}. Recall that two families of SPD matrices $\{\mathsf{A}_l\}$ and $\{\mathsf{B}_l\}$ (parametrized by their dimension $l$) are said to be \textit{spectrally equivalent} if there exist $l$-independent constants $\alpha$ and $\beta$ with
$$0<\alpha \le \frac{ \mathsf{w^T} \mathsf{A}_l \> \mathsf w}{\mathsf{w^T} \mathsf{B}_l \> \mathsf w} \le \beta, \quad \quad \forall \mathsf w \neq 0.$$
We drop the subscript $l$. To show that
\begin{equation}\label{eq:speceq}
    0<\alpha \le \frac{ \mathsf{\upmu^T} \mathsf{M_\lambda^{-2}} \mathsf{\upmu}}{ \mathsf{\upmu^T}(h_\Gamma \mathsf{M_\lambda})^{-1} \mathsf{\upmu}} \le \beta, \quad \quad \forall \mathsf{\upmu} \neq 0,
\end{equation}
we first multiply and then divide the previous relation by $\mathsf{\upmu^T}\mathsf{\upmu}$, so that it can be rewritten in terms of the Rayleigh quotients associated {with} the matrices $(h_\Gamma \mathsf{M_\lambda})^{-1}$ and $\mathsf{M_\lambda^{-2}} $. Since for a Hermitian matrix $\mathsf{M}$, its Rayleigh quotient lies in the interval $[\lambda_{\min}(\mathsf{M}), \lambda_{\max}(\mathsf{M})]$, and the eigenvalues of the inverse of a matrix are the reciprocals of the eigenvalues of the original matrix, we get
\begin{equation}\label{eq:eig}
    \frac{\lambda_{\min} (h_\Gamma \mathsf{M_\lambda})}{\lambda_{\max} ( \mathsf{M_\lambda^{2}})}\le \frac{\mathsf{\upmu^T}\mathsf{M_\lambda^{-2}} \mathsf{\upmu}}{\mathsf{\upmu^T}(h_\Gamma \mathsf{M_\lambda})^{-1} \mathsf{\upmu}} \le \frac{\lambda_{\max} (h_\Gamma \mathsf{M_\lambda})}{\lambda_{\min} ( \mathsf{M_\lambda}^{2})}.
\end{equation}
Moreover, for a quasi-uniform subdivision in $\mathbb{R}^1$ of shape-regular elements\footnote{\color{black}Since $d=2$, elements of $\Gamma_h$ are one-dimensional intervals, for which the shape-regularity condition is automatically fulfilled.}, it holds
\begin{equation}\label{eq:massbounds}
    c h_\Gamma \le \frac{\mathsf{\upmu^T} \mathsf{M_\lambda} \mathsf{\upmu} }{\mathsf{\upmu^T} \mathsf{\upmu}} \le C h_\Gamma \quad \quad \forall \mathsf{\upmu} \in \mathbb{R}^l.
\end{equation}
Noting that $\lambda (h_\Gamma \mathsf{M_\lambda}) = h_\Gamma \lambda ( \mathsf{M_\lambda})  $ and $\lambda (\mathsf{M_\lambda}^2) =(\lambda ( \mathsf{M_\lambda}))^2$, we use the above bounds for the eigenvalues in~\eqref{eq:eig}, which yields
\begin{equation}
    0 < \frac{c}{C^2} \le \frac{\mathsf{\upmu^T}\mathsf{M_\lambda^{-2}} \mathsf{\upmu}}{ \mathsf{\upmu^T}(h_\Gamma \mathsf{M_\lambda})^{-1} \mathsf{\upmu}} \le \frac{C}{c^2},
\end{equation}
{\color{black}proving} the spectral equivalence between the two matrices.

    {\section{Appendix: Inverse estimate}\label{sec:app:inverse_estimate}

        \noindent\textbf{Lemma.} \textit{Let $\Lambda_h$ be the space for the discrete Lagrange multiplier, defined in~\eqref{eqn:Lambda_h_S}. Then, there exists a constant $C>0$ independent of the mesh size $h_\Gamma$ such that the following estimate holds:}
        \begin{equation}
            ||\mu_h||_{-\frac{1}{2},\Gamma} \geq C h_{\Gamma}^{\frac{1}{2}} ||\mu_h||_{0,\Gamma} \qquad \forall \mu_h \in \Lambda_h.
        \end{equation}

        \begin{proof}
            In the case of $H^1$-conforming Lagrangian elements, by definition of dual norm, we have
            $$||\mu_h||_{-\frac{1}{2},\Gamma} \coloneqq \sup_{0 \ne z \in H^{\frac{1}{2}}(\Gamma)} \frac{\langle \mu_h, z \rangle_{\Gamma}}{||z||_{\frac{1}{2},\Gamma}} \geq \sup_{0 \ne z_h \in \Lambda_h} \frac{\langle \mu_h, z_h\rangle_{\Gamma}}{||z_h||_{\frac{1}{2},\Gamma}},$$
            where the second inequality follows from the fact that $\Lambda_h \subset H^{\frac{1}{2}}(\Gamma)$. Therefore, upon choosing $z_h = \mu_h$, we obtain
            $$||\mu_h||_{-\frac{1}{2},\Gamma} \geq \frac{||\mu_h||_{0,\Gamma}^2}{||\mu_h||_{\frac{1}{2},\Gamma}}.$$Finally, using the following inequality $$||\mu_h||_{\frac{1}{2},\Gamma} \leq c h_{\Gamma}^{-\frac{1}{2}} ||\mu_h||_{0,\Gamma},$$which extends classical inverse
            inequalities (cfr.~\cite[Sect. 1.7]{Ern_Guermond-FEM-2004}) from integer to real indices, gives the desired result. When $\Lambda_h$ is the discontinuous space of piecewise constant functions, the result has been proven by Glowinski and Girault in~\cite{Glowinski} (Theorem 9).
        \end{proof}}

\section{AMG Parameters}\label{sec:app:AMG_params}
The AMG parameter settings used in the numerical experiments are reported in Table~\ref{tab:amg_params}.
\renewcommand{\thetable}{C.\arabic{table}}
\setcounter{table}{0}
\begin{table}[h]
    \centering
    \renewcommand{\arraystretch}{1.2}
    \begin{tabular}{|l|l|}
        \hline
        \textbf{Parameter}    & \textbf{Value} \\
        \hline
        Smoother              & Chebyshev      \\
        Coarse solver         & Amesos-KLU     \\
        Smoother sweeps       & 2              \\
        V-cycle applications  & 1              \\
        Aggregation threshold & \(10^{-4}\)    \\
        Max size coarse level & 2000           \\
        \hline
    \end{tabular}
    \caption{Parameters for ML~\cite{Trilinos} (Trilinos 14.4.0).}
    \label{tab:amg_params}
\end{table}

\bibliography{refs}
\end{document}